\setlist[itemize]{leftmargin=*}
\setlist[enumerate]{leftmargin=*}
\def\co{\colon\thinspace}
\newcommand{\C}{\mathbb{C}}
\newcommand{\Z}{\mathbb{Z}}
\newcommand{\N}{\mathbb{N}}
\newcommand{\id}{\mathrm{Id}}
\newcommand{\R}{\mathbb{R}}
\newcommand{\chr}{\mathrm{char}\,}
\renewcommand{\P}{\mathbb{P}}
\newcommand{\bd}{\partial}
\newcommand{\ai}{$A_\infty$\ }
\newcommand{\F}{\mathcal{F}}
\newcommand{\M}{\mathcal{M}}
\newcolumntype{C}[1]{>{\centering\arraybackslash$}p{#1}<{$}}
\renewcommand{\S}{S}
\renewcommand{\k}{\mathbb{K}}
\newcommand{\x}{\times}
\newcommand{\CO}{\mathcal{{C}{O}}}
\newcommand{\OC}{\mathcal{{O}{C}}}
\newcommand{\Fuk}{\mathcal{F}uk}
\newcommand{\ev}{\mathrm{ev}}
\renewcommand{\F}{\mathcal{F}}
\newcommand{\D}{\mathcal{D}}
\newcommand{\RP}{\R P}
\newcommand{\CP}{\C P}
\renewcommand{\O}{\mathcal{O}}
\newcommand{\A}{\mathcal{A}}
\newcommand{\B}{\mathcal{B}}
\newcommand{\ot}{\otimes}
\newcommand{\st}{\star}
\DeclareMathOperator{\Spec}{Spec}
\DeclareMathOperator{\Crit}{Crit}
\DeclareMathOperator{\Hess}{Hess}
\newtheorem{theorem}{Theorem}[section]
\newtheorem{proposition}[theorem]{Proposition}
\newtheorem{lemma}[theorem]{Lemma}
\newtheorem{corollary}[theorem]{Corollary}
\newtheorem{hypothesis}[theorem]{Hypothesis}
\newtheorem{conjecture}[theorem]{Conjecture}
\theoremstyle{definition}
\newtheorem{remark}[theorem]{Remark}
\numberwithin{equation}{section}
\begin{document}

\title[The closed-open map for $S^1$-invariant Lagrangians]{The closed-open string map for $S^1$-invariant Lagrangians }
\author{Dmitry Tonkonog}
\email{dt385@cam.ac.uk, dtonkonog@gmail.com}
\address{Department of Pure Mathematics and Mathematical Statistics,
University of Cambridge,
Wilberforce Road,
Cambridge
CB3 0WB, UK}

\begin{abstract}
Given a monotone Lagrangian submanifold invariant under a loop of Hamiltonian diffeomorphisms, we compute a  piece of the closed-open string map into the Hochschild cohomology of the Lagrangian which captures the homology class of the loop's orbit. 

Our applications include split-generation and non-formality results for real Lagrangians in projective spaces and other toric varieties; a particularly basic example is that the equatorial circle on the 2-sphere carries a non-formal Fukaya \ai algebra in characteristic two.
\end{abstract}

\maketitle



\section{Introduction}
\label{sec:intro}
\subsection{Overview of main results}
Let $X$ be a compact monotone symplectic manifold, $L\subset X$  a monotone Lagrangian submanifold, and $\k$ be a field. 
We assume that $L$ satisfies the usual conditions making its Floer theory well-defined over $\k$, namely, $L$ has Maslov index at least 2, and is oriented and spin if $\chr\k\neq 2$. In this case, one can define a unital algebra over $\k$, the Floer cohomology $HF^*(L,L)$, which is invariant under Hamiltonian isotopies of $L$. A larger amount of information about $L$ is captured by the Fukaya \ai algebra of $L$, and given this \ai algebra, one can build another associative unital algebra called the Hochschild cohomology $HH^*(L,L)$. There is the so-called (full) closed-open string map
$$\CO^*\co QH^*(X)\to HH^*(L,L),$$
which is a map of unital algebras, where $QH^*(X)$ is the (small) quantum cohomology of $X$. 
This map is of major importance in symplectic topology,
particularly in light of Abouzaid's split-generation criterion \cite{Ab10,She13,RiSmi17,AFO3}, one of whose versions in the case $\chr\k=2$ says the following: if the closed-open map is {\it injective}, then $L$ split-generates the $w$-summand $\Fuk(X)_w$ of the Fukaya category, where $w=w(L)\in \k$ is the so-called obstruction number of $L$.  (When $\chr\k\neq 2$, the hypothesis can be weakened to say that $\CO^*$ is injective on a relevant eigensummand of $QH^*(X)$; we will recall this  later.)

Split-generation of  the Fukaya category $\Fuk(X)_w$ by a Lagrangian submanifold~$L$ is an algebraic phenomenon which has important geometric implications. For example, in this case $L$ must have non-empty intersection with any other monotone Lagrangian submanifold $L'$ which is a non-trivial object in $\Fuk(X)_w$, namely such that $HF^*(L',L')\neq 0$ and $w(L')=w$. Another application, though not discussed here, is that  split-generation results are used in proofs of homological mirror symmetry.

The present paper contributes with new calculations of the closed-open map, motivated by the split-generation criterion and the general lack of explicit calculations known so far. (The closed-open map is defined by counting certain pseudo-holomorphic disks with boundary on $L$, which makes it extremely hard to compute in general.)

There is a simplification of the full closed-open map, called the ``zeroth-order'' closed-open map, which is a unital algebra map
$$
\CO^0\co QH^*(X)\to HF^*(L,L).
$$
It is the composition of $\CO^*$ with the canonical projection $HH^*(L,L)\to HF^*(L,L)$, and if $\CO^0$ is injective, so is $\CO^*$ (but not vice versa). Although $\CO^0$ generally carries less information than $\CO^*$, it is  sometimes easier to compute. For example, we compute $\CO^0$ when $L$ is the real locus of a complex toric Fano variety $X$, see Theorem~\ref{th:toric_CO_0}. This map turns out to be non-injective in many cases, e.g.~for $\RP^{2n+1}\subset \CP^{2n+1}$ over a characteristic 2 field. The aim of the present paper is to study the higher order terms of the full closed-open map $\CO^*$, and to find examples when $\CO^*$ is injective but $\CO^0$ is not.

Specifically, let us consider the following setting: a loop  $\gamma$ of Hamiltonian symplectomorphisms preserves a Lagrangian $L$ setwise. Let $\S(\gamma)\in QH^*(X)$ be the Seidel element of $\gamma$, then from Charette and Cornea~\cite{CC16} one can see that 
$$\CO^0(\S(\gamma))=1_L,$$ the unit in $HF^*(L,L)$.
Our main result, Theorem~\ref{th:CO_invt_lag}, is a tool for distinguishing $\CO^*(\S(\gamma))$ from the Hochschild cohomology unit in $HH^*(L,L)$; this way it captures a non-trivial piece of the full closed-open map $\CO^*$ not seen by $\CO^0$. 
We apply Theorem~\ref{th:CO_invt_lag} to show that $\CO^*$ is injective for some real Lagrangians in toric manifolds, and also for monotone toric fibres which correspond to (non-Morse) $A_2$-type critical points of the Landau-Ginzburg superpotential. 

After this paper had appeared, Evans and Lekili \cite{EL15} proved split-generation for all orientable real toric Lagrangians, and all monotone toric fibres in zero characteristic, by completely different methods. They make use of the fact that these are homogeneous Lagrangians (i.e.~ they are orbits of Hamiltonian group actions), while we only use the fact these Lagrangians are invariant under certain Hamiltonian loops.

We will now mention our examples regarding real Lagrangians, and postpone all discussion of monotone toric fibres, along with an introductory part, to Section~\ref{sec:toric_fibre}.
 
\begin{proposition}
\label{prop:CO_RPn_Inject}
Let $\k$ be a field of characteristic $2$ and $\RP^n$ be the standard real Lagrangian in $\CP^n$.
Then $\CO^*\co QH^*(\CP^n)\to HH^*(\RP^n,\RP^n)$ is injective for all $n$.  
In contast, 
$\CO^0\co QH^*(\C P^n)\to HF^*(\RP^n,\RP^n)$ is injective if and only if $n$ is even.
\end{proposition}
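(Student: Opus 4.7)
The plan is to split by the parity of $n$. For $n$ even, a direct computation of $\CO^0$ via Theorem~\ref{th:toric_CO_0} suffices; for $n$ odd, one must extract the Hochschild information invisible to $\CO^0$ using Theorem~\ref{th:CO_invt_lag}.

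I would first apply Theorem~\ref{th:toric_CO_0} to $\RP^n\subset\CP^n$ to compute $\CO^0$ on the hyperplane class. With $QH^*(\CP^n)=\k[h]/(h^{n+1}-1)$ and $HF^*(\RP^n,\RP^n;\k)\cong\k[x]/(x^{n+1}-1)$ (with $|x|=1$) in characteristic~$2$, I expect the formula $\CO^0(h)=x^2$, reflecting the topological fact that the hyperplane restricts to $x^2\in H^2(\RP^n;\Z/2)$. Since $\CO^0$ is a ring map, its image is the subring generated by $x^2$, of dimension $(n+1)/\gcd(2,n+1)$, and equals all of $HF^*(\RP^n,\RP^n)$ exactly when $\gcd(2,n+1)=1$, that is, when $n$ is even. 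For $n$ odd, $\ker\CO^0$ is the principal ideal $(h^{(n+1)/2}-1)$. This settles the if-and-only-if statement for $\CO^0$ and, because $\CO^0$ factors through $\CO^*$, also establishes injectivity of $\CO^*$ when $n$ is even.

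For $n$ odd, I would next find a loop $\gamma$ of Hamiltonian diffeomorphisms preserving $\RP^n$ with $\S(\gamma)=h^{(n+1)/2}$. Because $n+1$ is even, $-I$ lies in the identity component of $SO(n+1)$, so any smooth path from $I$ to $-I$ in $SO(n+1)$ descends to a loop in $SO(n+1)/\{\pm I\}\subset PU(n+1)\simeq\Ham(\CP^n)$, and each element of the path, being a real orthogonal transformation, preserves $\RP^n$ setwise. Lifting back to $U(n+1)$, the path ends at $-I=e^{i\pi}I$, so a standard Seidel computation identifies this loop with the class $(n+1)/2\in\Z/(n+1)=\pi_1(\Ham(\CP^n))$, giving $\S(\gamma)=h^{(n+1)/2}$; hence $\S(\gamma)-1$ generates $\ker\CO^0$.

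Applying Theorem~\ref{th:CO_invt_lag} to $\gamma$ then produces a non-trivial class $\beta\in HH^*(\RP^n,\RP^n)$ of strictly positive Hochschild degree with $\CO^*(\S(\gamma))=1+\beta$, which immediately shows $\S(\gamma)-1\notin\ker\CO^*$. The main obstacle is promoting this to $\ker\CO^*=0$. Here I would use that, in characteristic~$2$, the ideal $(h^{(n+1)/2}-1)$ is square-zero since $(h^{(n+1)/2}-1)^2=h^{n+1}-1=0$ in $QH^*$, so every element of $\ker\CO^*\subseteq(h^{(n+1)/2}-1)$ has the form $(\S(\gamma)-1)\cdot r$ and satisfies $\CO^*((\S(\gamma)-1)\cdot r)=\beta\cdot\CO^*(r)$. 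The lowest Hochschild-degree component of this product is $\beta\cdot\CO^0(r)$, with all other contributions in strictly higher Hochschild degrees, so its vanishing would force $\beta\cdot\CO^0(r)=0$. The hard part is then to deduce $\CO^0(r)=0$ from this, i.e.\ to show that $\beta$ acts faithfully on the image of $\CO^0\subset HF^*\cong HH^0$; I expect this to follow from Theorem~\ref{th:CO_invt_lag}'s identification of $\beta$ with the fundamental class of the $\gamma$-orbit on $\RP^n$ (tracing out the generator of $H_1(\RP^n;\Z/2)$) together with a Poincar\'e-duality-type property of Hochschild cohomology of the Fukaya algebra of $\RP^n$. Once $\CO^0(r)=0$ is in hand, $r\in(h^{(n+1)/2}-1)$ and so $(\S(\gamma)-1)\cdot r=0$ already in $QH^*$, completing the proof.
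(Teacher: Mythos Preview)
Your overall strategy coincides with the paper's: same parity split, same use of Theorem~\ref{th:toric_CO_0} for the even case, same Hamiltonian loop $\gamma$ with $\S(\gamma)=h^{(n+1)/2}$ for the odd case, and the same reduction of $\ker\CO^*$ to elements of the form $(\S(\gamma)-1)\cdot r$ with $\CO^0(r)\neq 0$ via the square-zero property of the ideal. The gap is exactly where you flag it: the implication ``$\beta\cdot\CO^0(r)=0\Rightarrow\CO^0(r)=0$'' is not justified, and the appeal to ``a Poincar\'e-duality-type property of Hochschild cohomology'' is not an argument. Your sentence about ``the lowest Hochschild-degree component of this product'' also conflates the length \emph{filtration} on Hochschild cochains with a grading on $HH^*$; to make this rigorous you would have to work on the cochain level and then control coboundaries, which is precisely what Theorem~\ref{th:CO_invt_lag}(c) already packages for you.

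The paper closes this gap without any Hochschild duality, simply by invoking part~(c) of Theorem~\ref{th:CO_invt_lag} rather than part~(b). Part~(c) asserts that $\CO^*(\S(\gamma)*Q)$ and $\CO^*(Q)$ are linearly independent provided equation~$(**)$ has no solution $a\in HF^*(L,L)$. The left-hand side of~$(**)$ is the commutator $\mu^2(a,\Phi(y))+\mu^2(\Phi(y),a)$ in $HF^*(\RP^n,\RP^n)$; by Haug's Theorem~\ref{th:haug} this ring is \emph{commutative}, so the left-hand side vanishes identically. The right-hand side equals $\langle y,l\rangle\cdot\CO^0(Q)$, which for the generator $y\in H^1(\RP^n)$ is nonzero exactly when $\CO^0(Q)\neq 0$. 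Hence~$(**)$ has no solution whenever $\CO^0(Q)\neq 0$, and Theorem~\ref{th:CO_invt_lag}(c) gives $\CO^*\bigl((\S(\gamma)-1)*Q\bigr)\neq 0$ directly. So the ingredient you are missing is not a duality statement but the commutativity of $HF^*(\RP^n,\RP^n)$.
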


\begin{corollary}
\label{cor:RPn_generate}
Over a field of characteristic $2$, $\RP^n$ split-generates $\Fuk(\CP^n)_0$.
\end{corollary}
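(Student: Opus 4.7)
The plan is to deduce the corollary directly from Proposition~\ref{prop:CO_RPn_Inject} by invoking the version of Abouzaid's split-generation criterion recalled in the overview: over a field $\k$ of characteristic $2$, if $\CO^*\co QH^*(X)\to HH^*(L,L)$ is injective, then $L$ split-generates $\Fuk(X)_{w(L)}$. Proposition~\ref{prop:CO_RPn_Inject} provides the injectivity hypothesis verbatim, so the only remaining task is to identify which component $\Fuk(\CP^n)_w$ contains $\RP^n$, i.e.\ to compute its obstruction number $w(\RP^n)\in\k$.

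I would compute $w(\RP^n)$ directly from its definition as a (signed in $\chr\k\neq 2$; unsigned here) count of Maslov-$2$ holomorphic disks with boundary on $\RP^n$ passing through a generic point. For $n\geq 2$, the minimal Maslov number of $\RP^n\subset\CP^n$ equals $n+1\geq 3$, so there are no Maslov-$2$ disks at all and $w(\RP^n)=0$ for trivial reasons. For $n=1$, the equator $\RP^1\subset\CP^1$ bounds exactly two Maslov-$2$ holomorphic disks, namely the two hemispheres, contributing $1+1\equiv 0\pmod{2}$. Hence $w(\RP^n)=0$ in every case, and $\RP^n$ is an object of $\Fuk(\CP^n)_0$.

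Putting the two inputs together, Abouzaid's criterion produces the desired split-generation statement. I do not anticipate any substantive obstacle: the entire content of the corollary is absorbed into Proposition~\ref{prop:CO_RPn_Inject}, and the remaining steps are a routine Maslov-index computation plus a bookkeeping identification of the relevant eigensummand (which in characteristic $2$ is simply indexed by the single scalar $w(L)$, as mentioned in the introduction). The only place where one should be careful is to use the precise formulation of the split-generation criterion valid in the monotone setting over $\k$ with $\chr\k=2$, as already quoted.
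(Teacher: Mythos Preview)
Your proposal is correct and matches the paper's own argument: apply the split-generation criterion (Theorem~\ref{th:generation_crit}(b)) to the injectivity of $\CO^*$ supplied by Proposition~\ref{prop:CO_RPn_Inject}, together with the fact that $w(\RP^n)=0$ (which the paper records just before its proof of Proposition~\ref{prop:CO_RPn_Inject}, via the minimal Maslov number being $n+1$ and the two-hemisphere count for $n=1$). The paper's one-line proof simply cites these two ingredients, so your write-up is a faithful expansion of it.
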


As hinted above, this corollary leads to a result on non-displaceability of $\RP^n$ from other monotone  Lagrangians which are Floer-theoretically non-trivial. This has been known due to Biran and~Cornea~\cite[Corollary~8.1.2]{BC09B}, and Entov and Polterovich~\cite{EnPo09}. 
Very recently Konstantinov~\cite{Kon17} showed that the Chiang Lagrangian in $\CP^3$ admits a higher rank local system making it Floer-theoretically non-trivial over a characteristic~two field; then he concludes via Corollary~\ref{cor:RPn_generate} that the Chiang Lagrangian is non-displaceable from $\RP^3$. It is possible that for Lagrangians with higher rank local systems, a generalisation of \cite{BC09B, EnPo09} can be invoked instead of Corollary~\ref{cor:RPn_generate}, but we have not checked this. 

We can extract another interesting consequence about projective spaces from our main computation of the closed-open map.

\begin{proposition}
\label{prop:RPn_not_formal}
The Fukaya \ai algebra of the Lagrangian $\RP^{4n+1}\subset \CP^{4n+1}$ is not formal over a characteristic 2 field, for any $n\ge 0$.
\end{proposition}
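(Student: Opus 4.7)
The plan is proof by contradiction, combining Theorem~\ref{th:CO_invt_lag} with an explicit computation of Hochschild cohomology in the formal model.

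First, I would exhibit a Hamiltonian loop $\gamma$ of $\CP^{4n+1}$ that set-preserves $L = \RP^{4n+1}$, is non-trivial in $\pi_1(\Ham(\CP^{4n+1})) = \Z/(4n+2)$ so that $\S(\gamma) \in QH^*(\CP^{4n+1})$ is not the identity, and whose orbit through a generic point of $L$ represents the non-zero class in $H_1(L; \k) = \k$. Natural candidates come from $O(2)$- or $SO(3)$-subgroups of the isometry group of $\CP^{4n+1}$ preserving $\RP^{4n+1}$, possibly combined with a central factor in $U(4n+2)$ to adjust the class in $\pi_1(\Ham)$. Given such a $\gamma$, Theorem~\ref{th:CO_invt_lag} produces a non-trivial component of $\CO^*(\S(\gamma)) \in HH^*(L, L)$ recording the orbit class $[\gamma \cdot p] \in H_1(L; \k)$; in particular $\CO^*(\S(\gamma)) \neq 1$ in $HH^*(L, L)$, even though $\CO^0(\S(\gamma)) = 1_L$.

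Next, assume for contradiction that the Fukaya $A_\infty$-algebra $\mathcal{A}$ of $L$ is formal over $\k$. Then there is an isomorphism $HH^*(\mathcal{A}, \mathcal{A}) \cong HH^*(A, A)$ where $A := H^*(\RP^{4n+1}; \k) = \k[x]/x^{4n+2}$, compatible with the length-zero projection to $HF^*(L, L) \cong A$. Using the periodic bimodule resolution of $A$ over $A \otimes A^{\mathrm{op}}$, one finds that in characteristic~$2$ with even truncation $4n+2$ both differentials vanish, giving an explicit bigraded description of $HH^*(A, A)$. The class $\CO^*(\S(\gamma))$ then has total Hochschild degree $|\S(\gamma)|$ and projects to $1$ in length zero; I would show---by matching this against the available bidegrees in the formal model, with $|x|=1$---that no non-trivial Hochschild class meeting these constraints exists. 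Hence $\CO^*(\S(\gamma)) = 1$ in $HH^*(L, L)$ under the formality assumption, contradicting Theorem~\ref{th:CO_invt_lag}.

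The hard part will be this final bigrading analysis: identifying the specific Hochschild cocycle produced by Theorem~\ref{th:CO_invt_lag} in the formal model and verifying that it is exact there. The restriction to $\dim L = 4n+1$ rather than $4n+3$ presumably arises here from a parity consideration; for $\RP^{4n+3}$, a non-exact representative would exist and the same argument would give no obstruction.
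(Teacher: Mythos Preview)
Your overall strategy --- transfer $\CO^*(\S(\gamma))$ to the Hochschild cohomology of the formal model and derive a contradiction --- is exactly the paper's, but two genuine gaps make the proposal unworkable as written.

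First, you have misidentified the formal model. Formality of the Fukaya $A_\infty$ algebra means it is quasi-isomorphic to its cohomology $HF^*(L,L)$, not to the singular cohomology $H^*(L)$. By Haug's theorem (Theorem~\ref{th:haug}) one has $HF^*(\RP^{4n+1},\RP^{4n+1})\cong \k[u]/(u^{4n+2}+1)$, which for $n\ge 1$ is \emph{not} isomorphic to $\k[x]/x^{4n+2}$ (the latter is local, the former is not). Any bigrading argument based on the internal $\Z$-grading of $H^*(\RP^{4n+1})$ is therefore applied to the wrong algebra, and no such internal $\Z$-grading is available on $HF^*(L,L)$.

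Second, even in the correct formal model $A=\k[u]/(u^{4n+2}+1)$, the constraints you list --- length-zero part equal to $1_L$, plus a degree condition --- do not pin the class down to $1_{HH}$. Since $f'=0$ here, Holm's computation gives $HH^1(A)\cong A$, so there are many Hochschild cocycles $h$ with $h^0=0$ and $h^1\neq 0$; a bidegree count alone cannot exclude them. The paper's argument (Theorem~\ref{th:invt_non_formal}) uses two further ingredients you are missing: (i) the relation $\S(\gamma)^2=1$ in $QH^*(\CP^{4n+1})$, which forces $h\star h=0$ for $h=\CO^*(\S(\gamma))-1$; and (ii) Holm's explicit formula for the Yoneda product on $HH^1(A)$, namely $\psi(h\star h)=\psi(h)^2\cdot\sum_{j\ \text{odd}}f_{2j}u^{2j-2}$. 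For $f=u^{4n+2}+1$ this auxiliary factor is $u^{4n}$, a unit, so $\psi(h)^2=0$ and hence (after a short argument) $\psi(h)=s'(u)=0$ for the element $s(u)$ with $F^1(s(u))=u$. That forces $s(u)$ to lie in the subalgebra generated by $u^2$, contradicting that $F^1$ is an algebra isomorphism. Incidentally, this is where the $4n+1$ versus $4n+3$ distinction enters: for $\RP^{4n+3}$ the factor $\sum_{j\ \text{odd}}f_{2j}u^{2j-2}$ vanishes, so the Yoneda-square constraint gives no information.
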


Here, formality means an existence of a quasi-isomorphism with the associative algebra $HF^*(\RP^{4n+1},\RP^{4n+1})\cong \k[u]/(u^{4n+2}-1)$, considered as an \ai algebra with trivial higher-order structure maps.
In particular, the Fukaya \ai algebra of the equator  $S^1\subset S^2$ is not formal in characteristic 2; we devote a separate discussion to this fact in Section~\ref{sec:real_toric} where explicitly exhibit a non-trivial Massey product which provides an alternative proof of the non-formality.
Below is another example of split-generation which we can prove using the same methods.

\begin{proposition}
\label{prop:CO_BlP9_Inject}
Let $\k$ be a field of characteristic $2$, $X=Bl_{\CP^1}\CP^9$ the blow-up of $\CP^9$ along a complex line which intersects $\RP^9$ in a circle, and let $L\subset X$ be the blow-up of $\RP^9$ along that circle. 
Then $\CO^*\co QH^*(X)\to HH^*(L,L)$ is injective although $\CO^0\co QH^*(X)\to HF^*(L,L)$ is not. 
Consequently, $L$ split-generates $\Fuk(X)_0$.
\end{proposition}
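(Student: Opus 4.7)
The plan is to mimic the proof of Proposition~\ref{prop:CO_RPn_Inject}, adapted to this toric blow-up. First, I would choose the line $\CP^1 \subset \CP^9$ to be a toric subvariety meeting $\RP^9$ in a circle, so that $X = Bl_{\CP^1}\CP^9$ is still a toric Fano manifold and complex conjugation lifts to a real structure on $X$. The fixed locus $L$, which is the blow-up of $\RP^9$ along $\RP^1$, is then a monotone Lagrangian of Maslov index $\ge 2$, spin over a characteristic-$2$ field, with obstruction number $w(L)=0$; hence $L$ is an object of $\Fuk(X)_0$.

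Next, I would use Theorem~\ref{th:toric_CO_0} to obtain an explicit formula for $\CO^0\co QH^*(X) \to HF^*(L,L)$. Here $QH^*(X)$ is generated by the hyperplane class $H$ and the exceptional class $E$ modulo two quantum Stanley-Reisner-type relations, and $HF^*(L,L)$ admits an analogous presentation coming from the Landau-Ginzburg superpotential restricted to the real chart. A direct inspection, paralleling the odd-dimensional case $\RP^{2n+1} \subset \CP^{2n+1}$ treated in Proposition~\ref{prop:CO_RPn_Inject} (the parity of $9$ is what drives the phenomenon), shows that $\ker \CO^0$ is non-zero.

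To detect elements of $\ker\CO^0$ using the higher-order terms of $\CO^*$, I would choose a Hamiltonian circle action $\gamma$ on $X$ arising from a subcircle of the toric torus which preserves $L$ setwise and whose generic orbit represents a non-zero homology class in $H_*(L;\k)$. By the Charette--Cornea identity $\CO^0(\S(\gamma)) = 1_L$, so $\S(\gamma) - 1 \in \ker \CO^0$. Theorem~\ref{th:CO_invt_lag} then identifies the length-$1$ Hochschild component of $\CO^*(\S(\gamma))$ with a cochain recording the $\gamma$-orbit class on $L$, which is non-trivial; hence $\S(\gamma) - 1 \notin \ker \CO^*$. Choosing enough independent subcircles $\gamma_i$ of the large toric torus and matching the Seidel differences $\S(\gamma_i) - 1$ against a basis of the kernel computed in the previous step establishes $\ker\CO^* = 0$.

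Split-generation of $\Fuk(X)_0$ then follows from the characteristic-$2$ version of Abouzaid's criterion recalled in the introduction. The main obstacle is the middle step: pinning down $\ker\CO^0$ explicitly for this blow-up and then selecting a collection of circle actions $\gamma_i$ for which Theorem~\ref{th:CO_invt_lag} witnesses every non-zero element of $\ker\CO^0$ as surviving $\CO^*$. This requires careful bookkeeping of the Seidel homomorphism on $X$ and of the homology of the exceptional $S^1$-bundle lying inside $L$, but the high-dimensional torus acting on $X$ provides ample flexibility in choosing the $\gamma_i$.
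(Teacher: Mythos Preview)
Your outline has the right shape but contains a genuine gap at the crucial step. The subcircles of the toric torus do \emph{not} preserve the real Lagrangian $L$ setwise: as the paper notes when proving Theorem~\ref{th:toric_CO_0}, a toric circle $\gamma$ only satisfies $\gamma_{1/2}(L)=L$, not $\gamma_t(L)=L$ for all $t$. So Theorem~\ref{th:CO_invt_lag} cannot be applied to toric subcircles, and your plan of harvesting ``enough independent subcircles $\gamma_i$'' from the toric torus breaks down at the outset.

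The paper's actual argument (carried out in the proof of Theorem~\ref{th:hirzebruch}(b), of which Proposition~\ref{prop:CO_BlP9_Inject} is the special case $q=1$, $r=4$) instead exploits a \emph{non-toric} Hamiltonian loop coming from the real form $SO(2)^q\times SO(2r)$ of an $SU(2)^q\times SU(2r)$ action on $X$; this is the ``extra symmetry'' alluded to at the end of Section~\ref{sec:intro}. That loop \emph{does} preserve $L$, and the paper computes its Seidel element explicitly (Lemma~\ref{lem:hirzebruch_compute_seidel_gamma}). The second point you miss is structural: rather than trying to span $\ker\CO^0$ linearly by several Seidel differences, the paper shows that $\ker\CO^0=\ker\F$ is the \emph{principal ideal} generated by $\S(\gamma)+1$ (Lemma~\ref{lem:hirzebruch_ker_f_generator}), and that $(\ker\F)^2=0$. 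Hence any nonzero $P\in\ker\CO^0$ is of the form $(\S(\gamma)+1)*Q$ with $\CO^0(Q)\neq 0$, and a single application of Theorem~\ref{th:CO_invt_lag}(c) finishes the proof. Your approach of matching several $\S(\gamma_i)-1$ against a basis would, even if the $\gamma_i$ existed, still need an argument for elements of $\ker\CO^0$ that are not themselves of this form.
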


(The manifold $Bl_{\CP^1}\CP^9$ is the first instance among $Bl_{\CP^k}\CP^n$ for which $L$ is monotone of Maslov index at least $2$, and such that $\CO^0$ is not injective --- the last requirement makes the use of our general results  essential in this example.)
In general, it is known that the real Lagrangian in a toric Fano variety is not displaceable from the monotone toric fibre: this was proved by Alston and Amorim \cite{AA11}. Proposition~\ref{prop:CO_BlP9_Inject} implies a much stronger non-displaceability result, like the one which has been known for $\RP^n\subset \CP^n$.

\begin{corollary}
\label{cor:non_disp_Bl_P9}
 Let $\k$ and $L\subset X$ be as in Proposition~\ref{prop:CO_BlP9_Inject}, and $L'\subset X$ any other monotone Lagrangian, perhaps equipped with a local system $\pi_1(L)\to \k^\x$, with minimal Maslov number at least 2 and such that $HF^*(L',L')\neq 0$. If $w(L')\neq 0$, we also assume the technical Hypothesis~\ref{hyp:ganatra}, which is expected to hold following \cite{Ga13}. Then $L\cap L'\neq\emptyset$. 
\end{corollary}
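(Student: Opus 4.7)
The plan is to extract non-displaceability from the split-generation statement of Proposition~\ref{prop:CO_BlP9_Inject} by a standard Fukaya-categorical argument. Assume for contradiction that $L\cap L' = \emptyset$. Then a sufficiently small Hamiltonian perturbation of $L$ keeps the two Lagrangians disjoint, so after this perturbation the Floer complex $CF^*(L, L')$ has no generators and $HF^*(L, L') = 0$; the same reasoning applied to all $A_\infty$-operations shows that the morphism module $\hom(L, L')$ in $\Fuk(X)$ is acyclic.

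Next I would verify that $L'$ defines a non-zero object of the summand $\Fuk(X)_0$. Since $L'$ has minimal Maslov number at least $2$ and $HF^*(L', L')\neq 0$, it is a non-zero object of $\Fuk(X)_{w(L')}$. Unitality of the closed-open map constrains $w(L')$ to lie in the spectrum of quantum multiplication by $c_1(X)$ on $QH^*(X;\k)$; for $X = Bl_{\CP^1}\CP^9$ in characteristic $2$ a direct inspection of $QH^*(X;\k)$, via the toric description or the associated Landau-Ginzburg superpotential as in the proof of Proposition~\ref{prop:CO_BlP9_Inject}, shows that the only such eigenvalue supporting a monotone Lagrangian with non-vanishing Floer cohomology is $0$. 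Hence $w(L') = 0$ and $L' \in \Fuk(X)_0$.

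Since $L$ split-generates $\Fuk(X)_0$ by Proposition~\ref{prop:CO_BlP9_Inject}, $L'$ is quasi-isomorphic to a direct summand of an iterated mapping cone of copies of $L$. The acyclicity of $\hom(L, L')$ established in the first paragraph then propagates through cones and retracts, forcing $\hom(L', L')$ to be acyclic and therefore $HF^*(L', L') = 0$, contradicting our hypothesis. The step I expect to require the most care is the eigenvalue identification in the middle paragraph: if it were possible for $L'$ to have $w(L')\neq 0$, one would need a supplementary argument, since Proposition~\ref{prop:CO_BlP9_Inject} gives no information about the summands $\Fuk(X)_w$ with $w\neq 0$. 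Once $w(L') = 0$ is secured, the rest is the standard categorical argument and should proceed without further complication.
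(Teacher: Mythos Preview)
Your argument has a genuine gap at the middle step, where you try to force $w(L')=0$. The constraint that $w(L')$ must be an eigenvalue of $-*c_1(X)$ on $QH^*(X;\k)$ comes from the identity $\CO^0(c_1)=w(L')\cdot 1_{L'}$, but as the discussion preceding Lemma~\ref{lem:eigenvalue_split} makes explicit, in characteristic~$2$ this identity is only available when $L'$ is \emph{orientable} (one needs $c_1(X)$ to lift to $H^2(X,L';\k)$; the Auroux--Kontsevich--Seidel argument otherwise gives only $\CO^0(2c_1)=2w(L')\cdot 1_{L'}$, i.e.\ $0=0$). For non-orientable $L'$ there is no eigenvalue constraint at all, so your spectral argument cannot cover that case. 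You flag this step as the delicate one but do not actually carry out the promised inspection of $QH^*(X;\k)$, and even a successful inspection would not close the gap for non-orientable $L'$.

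The paper handles the possibility $w(L')\neq 0$ by a different, characteristic-$2$ trick due to Abreu--Macarini: pass to the product $X\times X$. There $w(L'\times L')=2w(L')=0$ and $w(L\times L)=0$, so $L'\times L'$ is automatically an object of $\Fuk(X\times X)_0$ with non-vanishing self-Floer cohomology. A K\"unneth-type compatibility of the open-closed map (Lemma~\ref{lem:split_gen_prod}) shows that injectivity of $\CO^*$ for $L$ forces $L\times L$ to split-generate $\Fuk(X\times X)_0$. Your first and third paragraphs, applied verbatim on the product, then give $(L\times L)\cap(L'\times L')\neq\emptyset$, hence $L\cap L'\neq\emptyset$. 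When $w(L')=0$ your argument is fine and matches the paper; the product trick is precisely what fills the gap you yourself identified.
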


Here $HF^*(L',L')$ denotes the Floer cohomology of $L'$ with respect to the local system $\rho$, so a better notation would be $HF^*((L',\rho),(L',\rho))$. For brevity, we decided to omit $\rho$ from our notation of Floer and Hochschild cohomologies throughout the article, when it is clear that a Lagrangian is equipped with such a local system. The point of allowing local systems in Corollary~\ref{cor:non_disp_Bl_P9} is to introduce more freedom in achieving the non-vanishing of $HF^*(L',L')$.

Note that Corollary~\ref{cor:non_disp_Bl_P9} does not require that the obstruction number of $L'$ matches the one of $L$, namely zero. If $w(L')\neq 0$, we can pass to $X\times X$  noticing that
$w(L'\times L')=2w(L')=0$ and similarly $w(L\times L)=0$, so we have well-defined Floer theory between the two product Lagrangians. 
This trick was observed by Abreu and Macarini~\cite{AM13}
and has also been used in \cite{AA11}. 
So it suffices to show that $L\times L$ split-generates $\Fuk(X\times X)_0$; this follows from Proposition~\ref{prop:CO_BlP9_Inject} by the general expectation that the condition of the Abouzaid's split-generation criterion is ``preserved'' under K\"unneth isomorphisms. As we explain later, this general expectation is contingent upon a certain commutative diagram which we formulate as Hypothesis~\ref{hyp:ganatra}, and which is largely substantiated by Ganatra \cite{Ga13}; see also \cite{AS10,Am14}.

As in the case with $\RP^n$, we  also prove a non-formality statement.

\begin{proposition}
\label{prop:BlP9_not_formal}
The Fukaya \ai algebra of the Lagrangian $Bl_{\RP^1}\RP^9\subset BL_{\CP^1}\CP^9$ from Proposition~\ref{prop:CO_BlP9_Inject} is not formal over a characteristic 2 field.
\end{proposition}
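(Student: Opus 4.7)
The plan is to mimic the argument for Proposition~\ref{prop:RPn_not_formal}, specialized to the Lagrangian $L = Bl_{\RP^1}\RP^9$ inside $X = Bl_{\CP^1}\CP^9$. Arguing by contradiction, assume the Fukaya \ai algebra of $L$ is quasi-isomorphic to $A := HF^*(L,L)$ with trivial higher operations. Because Hochschild cohomology is an \ai quasi-isomorphism invariant, $HH^*(L,L)$ would then be canonically identified, as a unital algebra equipped with the projection to $HF^*(L,L)$, with the classical Hochschild cohomology $HH^*(A,A)$ of the associative algebra $A$; in particular, the full closed-open map $\CO^*$ would factor through classical $HH^*(A,A)$.

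First I would take the Hamiltonian circle action $\gamma$ on $X$ that preserves $L$ set-wise (the one already used in the proof of Proposition~\ref{prop:CO_BlP9_Inject}) and apply Theorem~\ref{th:CO_invt_lag} to obtain an explicit Hochschild cocycle representative of $\CO^*(\S(\gamma))$. By that theorem together with Charette--Cornea, this representative projects to $1_L$ in $HF^*(L,L)$ but has a prescribed non-trivial component in positive Hochschild degree, detecting the homology class of a $\gamma$-orbit on $L$. Under the hypothetical formality isomorphism, this cocycle must then correspond to a bona fide class in classical $HH^*(A,A)$ whose degree-$0$ part is $1_L$.

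The core of the argument is then a concrete obstruction computation in classical $HH^*(A,A)$. I would begin by describing $A$ as an associative algebra; the real-toric combinatorics of the blow-up, combined with the characteristic $2$ hypothesis, should yield a product-of-local-Artinian-algebras decomposition analogous to the decomposition $\k[u]/(u^{4n+2}-1)\cong \prod \k[\varepsilon]/(\varepsilon^2)$ that drives the $\RP^{4n+1}$ case. Given such a decomposition, classical $HH^*(A,A)$ is computable by standard means, and one verifies that no class of the shape produced by Theorem~\ref{th:CO_invt_lag} fits inside it consistently with $\CO^*$ being a unital algebra map from $QH^*(X)$; concretely, one tests the algebra-map constraint on a suitable polynomial in $\S(\gamma)$ whose value in $QH^*(X)$ is explicitly known and incompatible with the algebraic structure of classical $HH^*(A,A)$.

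The main obstacle is the explicit identification of $A$ and the relevant portion of its classical Hochschild cohomology for this particular Lagrangian. Once the analogue of the dual-numbers decomposition is in hand, the final algebraic contradiction is structurally parallel to the $\RP^{4n+1}$ case, so the bulk of the effort goes into reducing the geometric statement to that algebraic model via the toric structure of the blow-up and the identification of $HF^*(L,L)$ provided by the usual Morse--Bott setup on $L = Bl_{\RP^1}\RP^9$.
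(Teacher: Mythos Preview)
Your high-level strategy---assume formality, transport $\CO^*(\S(\gamma))$ to classical Hochschild cohomology of the associative algebra $A=HF^*(L,L)$, and derive a contradiction from the ring-map property of $\CO^*$---is exactly the paper's strategy. However, the concrete implementation you sketch diverges from the paper and leaves the decisive step unnamed.

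The paper does \emph{not} decompose $A$ into a product of dual-number rings; instead it identifies $A\cong\k[u]/V(u)^2$ as a \emph{single} monic quotient (this is equation~(\ref{eq:QH_Hirzebruch_poly}) specialised to $q=1$, $r=4$, $p=1$) and then invokes Holm's explicit description of $HH^*(\k[u]/f)$ for $f=V^2$ with $f'=0$. The contradiction is obtained via a packaged criterion (Theorem~\ref{th:invt_non_formal}): one checks that $\S(\gamma)^2=1$, that $\sum_{j\text{ odd}}f_{2j}u^{2j-2}$ is invertible in $A$, and that the generator $r(u)=u^{p/g}$ pairs non-trivially with the orbit class $l$. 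These three ingredients force, via Holm's Yoneda-product formula, that the transported cocycle $h$ satisfies $\psi(h)^2=0$, hence $\psi(h)=s'(u)$ with $s'(u)=0$; but then $s(u)$ lies in the proper subalgebra generated by $u^2$, contradicting that $F^1$ is an algebra isomorphism taking $s(u)$ to a generator.

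Your proposal is missing precisely this mechanism. The ``suitable polynomial in $\S(\gamma)$'' you allude to is $\S(\gamma)^2-1=0$, and the contradiction is not a generic incompatibility with $HH^*(A,A)$ but the specific consequence that $h\star h=0$ forces $\psi(h)$ to be nilpotent, which via the derivation formula $h(u^m)=a\cdot mu^{m-1}$ collapses the image of the formality isomorphism. Your product-of-local-rings decomposition is unnecessary and would in any case require $\k$ to contain all roots of $V(u)$, weakening the statement; the paper's monic-quotient approach works over an arbitrary field of characteristic~$2$.
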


Although we cannot prove that $\CO^*$ is injective for the real locus of an arbitrary toric Fano variety, we are able to do this in a slightly wider range of examples  which we postpone to Section~\ref{sec:real_toric}.
We will prove Proposition~\ref{prop:CO_RPn_Inject} and Corollary~\ref{cor:RPn_generate}  at the end of the introduction, and the remaining statements from above will be proved in Section~\ref{sec:real_toric}.
Now we state the main theorem; the new pieces of notation are explained straight after the statement.

\begin{theorem}
\label{th:CO_invt_lag}
Let $X$ be a compact monotone symplectic manifold, $L\subset X$ a monotone Lagrangian submanifold of Maslov index at least $2$, possibly equipped with a local system $\rho\co H_1(L)\to \k^\x$. If $\chr \k\neq 2$, assume $L$ is oriented and spin. 

Let
$\gamma=\{\gamma_t\}_{t\in S^1}$ be a loop of Hamiltonian symplectomorphisms of $X$, and denote by $\S(\gamma)\in QH^*(X)$ the corresponding Seidel element. 
Suppose the loop $\gamma$ preserves $L$ setwise, that is, $\gamma_t(L)=L$. Denote by $l\in H_1(L)$ the homology class
of an orbit $\{\gamma_t(q)\}_{t\in S^1}$, $q\in L$. Finally, assume $HF^*(L,L)\neq 0$.
\begin{enumerate}
 \item[(a)] Then $\CO^0(\S(\gamma))=(-1)^{\epsilon(l)}\cdot \rho(l)\cdot1_L$ where $1_L\in HF^*(L,L)$ is the unit.
\item[(b)] 
Suppose there exists no $a\in HF^*(L,L)$ such that
\begin{multline*}
\tag*{$(*)$}
\mu^2(a,\Phi(y))+\mu^2(\Phi(y),a)=(-1)^{\epsilon(l)}\rho(l)\cdot\langle y,l\rangle\cdot 1_L
\quad\text{for\ each\ } y\in H^1(L).
\end{multline*}
Then $\CO^*(\S(\gamma))\in HH^*(L,L)$ is linearly independent from the Hochschild cohomology unit.
\item[(c)]
More generally, suppose $Q\in QH^*(X)$ and there exists no $a\in HF^*(L,L)$ such that
\begin{equation*}
\tag{$**$}
\mu^2(a,\Phi(y))+\mu^2(\Phi(y),a)=(-1)^{\epsilon(l)}\rho(l)\cdot\langle y,l\rangle\cdot \CO^0(Q) \quad \text{for\ each\ } y\in H^1(L).
\end{equation*}
Then $\CO^*(\S(\gamma)*Q)$ and $\CO^*(Q)$ are linearly independent in the Hochschild cohomology $HH^*(L,L)$.
\end{enumerate}
\end{theorem}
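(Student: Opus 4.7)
The plan is to model the Seidel element $\S(\gamma)$ by a count of pseudo-holomorphic sections of the Hamiltonian fibration $E_\gamma\to S^2$ built from $\gamma$, and then identify $\CO^0(\S(\gamma))$ with a count of holomorphic disks in $X$ whose interior marked point is constrained to such a Seidel section. Since $\gamma$ preserves $L$, every point $q\in L$ produces a distinguished section $t\mapsto \gamma_t(q)$ of $E_\gamma$, which lies in the Seidel class. Choosing an $S^1$-invariant almost complex structure making these sections holomorphic, a cobordism/gluing argument as in Charette--Cornea \cite{CC13} identifies the disks contributing to $\CO^0(\S(\gamma))$ with constant disks capped by orbit cylinders $\{\gamma_t(q)\}$; summing over $q\in L$ and tracking the local system along each orbit yields exactly $\rho(L)\cdot 1_L$.

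\textbf{Parts (b) and (c).} The Hochschild unit $1_{HH}\in HH^*(L,L)$ is represented by the length-zero cocycle $1_L\in CF^*(L,L)$, and by (a) the cocycle $\CO^*(\S(\gamma))$ has length-zero component $\rho(L)\cdot 1_L$. A Hochschild cocycle whose length-zero part is zero is a coboundary iff one can find a length-zero cochain $a\in CF^*(L,L)$ such that the Hochschild differential $d_{Hoch}(a)$ cancels its higher-length components; recall that the length-one component of $d_{Hoch}(a)$, evaluated on an element $b$, equals $\mu^2(b,a)\pm \mu^2(a,b)$. Consequently, $\CO^*(\S(\gamma))$ and $\rho(L)\cdot 1_{HH}$ are linearly independent in $HH^*$ precisely when no such $a$ can cancel the length-one component of $\CO^*(\S(\gamma))$ on the image of $\Phi$, which is exactly condition $(*)$. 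For (c), the proof is the same formal manipulation after replacing $\S(\gamma)$ by $\S(\gamma)*Q$: since $\CO^*$ is a unital algebra map, the length-zero component of $\CO^*(\S(\gamma)*Q)$ is $\CO^0(Q)$ up to the factor $\rho(L)$, and the length-one component on $\Phi(y)$ is obtained from the computation in (b) multiplied by $\CO^0(Q)$ through the module structure, which leads to condition $(**)$.

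\textbf{Key geometric step.} The crux is the computation of the length-one component of $\CO^*(\S(\gamma))$ evaluated on $\Phi(y)$ for $y\in H^1(L)$. The relevant moduli space parametrizes disks with one interior marked point on a Seidel section, one boundary marked point carrying the input $\Phi(y)$, and one free boundary marked point used for the output. Using the same loop-invariant almost complex structure as in (a), rigidity forces the contributing disks to be the orbit cylinders $\{\gamma_t(q)\}$, whose boundary traces out a loop in $L$ representing the class $l\in H_1(L)$. Pairing the cohomology class $y$ with this boundary loop, and weighting by the local system along the orbit, produces exactly the factor $\rho(l)\cdot\langle y,l\rangle$, with the output localized on the fundamental cycle of $L$. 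This yields the equality of the length-one component of $\CO^*(\S(\gamma))$ with $y\mapsto \rho(l)\cdot\langle y,l\rangle\cdot 1_L$.

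\textbf{Expected main obstacle.} The hard part will be setting up the moduli spaces in step three so that the reduction to orbit cylinders is rigorous: one needs a choice of $S^1$-invariant (or $S^1$-equivariantly perturbed) Floer and Seidel data that achieves transversality for the low-index moduli spaces defining both $\CO^0$ and the length-one component of $\CO^*$, while still forcing the geometric contributions to be the expected ones. A related delicate issue is the compatibility of the chain-level model of $\Phi$ with the evaluation map on the orbit, to obtain the pairing $\langle y,l\rangle$ on the nose, together with the precise orientation and local-system conventions needed for parts (b) and (c) to read out of the formulas $(*)$ and $(**)$ as stated. Once these are in place, the remainder of the argument is formal Hochschild algebra.
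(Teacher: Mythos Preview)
Your overall architecture is right, but there are two genuine gaps and one oversimplification.

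\medskip
\textbf{Hochschild unit and coboundary argument.} You assert that $1_{HH}$ is represented by the length-zero cocycle $1_L$. The Fukaya $A_\infty$ algebra is not strictly unital, so this is false on the nose: the paper instead takes $1_{HH}=\CO^*(1)$ and applies the same length-one computation to the identity loop to see that $(1_{HH})^1$ vanishes on $CF^1$-cocycles. More seriously, your claim that ``a Hochschild cocycle with zero length-zero part is a coboundary iff there is a length-zero cochain $a$ with $d_{Hoch}(a)$ equal to it'' is wrong: a bounding cochain $h=(h^0,h^1,\ldots)$ may have arbitrary higher components. The paper allows such an $h$, extracts from length~$0$ that $\mu^1(h^0)=0$ and the scalar must be $-1$, then from length~$1$ gets
\[
\mu^2(x,h^0)+\mu^2(h^0,x)+\mu^1(h^1(x))+h^1(\mu^1(x))=\rho(l)\langle\Psi(x),l\rangle\cdot 1_L,
\]
and only after restricting to Floer \emph{cocycles} $x\in CF^1$ and passing to cohomology do the $h^1$-terms drop, yielding $(*)$ with $a=[h^0]\in HF^*(L,L)$. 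In particular the theorem gives only one implication, not the ``precisely when'' you state.

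\medskip
\textbf{The length-one computation.} Your description ``rigidity forces the contributing disks to be the orbit cylinders'' is not the mechanism, and there is no $S^1$-invariant almost complex structure argument here. The paper first observes that $\CO^1(\S(\gamma))|_{CF^1}$ lands in $CF^0$ for degree reasons and involves only Maslov-$0$ disks; this restriction to $CF^1$ is essential and is why $\Phi$ on $H^1(L)$ appears. The actual computation (Proposition~2.3) is a one-parameter cobordism in the style of holomorphic pearly trees: one chooses perturbation data so that as the input puncture collides with the output, the bubble is attached at an \emph{unpunctured} point, then degenerates the configuration into a PSS half-disk, a gradient flowline of $f\circ\gamma_t$, and a unit half-disk. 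The count reduces to the Morse-theoretic intersection of the unstable manifold of $\Psi(x)$ with the orbit $l$, which is $\langle\Psi(x),l\rangle$. You have correctly predicted that this is the hard step, but ``orbit cylinders plus rigidity'' will not substitute for this cobordism.

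\medskip
\textbf{Part (c).} Your idea is right; concretely the paper uses the Yoneda product formula $(\phi\star\psi)^1(x)=\pm\mu^2(\phi^1(x),\psi^0)\pm\mu^2(\phi^0,\psi^1(x))$ together with $(\CO^*(\S(\gamma)-1))^0=0$ to get $\CO^1((\S(\gamma)-1)*Q)(x)=\rho(l)\langle\Psi(x),l\rangle\cdot\CO^0(Q)$, after which the argument is identical to~(b).
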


Here $\mu^2$ is the product on $HF^*(L,L)$, $\langle-,-\rangle$ is the pairing $H^1(L)\otimes H_1(L)\to\k$, and $\S(\gamma)*Q$ is the quantum product of the two elements. Next, 
$$\Phi\co H^1(L)\to HF^*(L,L)$$ 
is the PSS map of Albers \cite{Alb08}, which is canonical and well-defined if $HF^*(L,L)\neq 0$. Its well-definedness in a setting closer to ours was studied by e.g.~Biran and Cornea \cite{BC09A}, and later we discuss it in more detail. Note that $\Phi$ is not necessarily injective, although in our applications, when $HF^*(L,L)\cong H^*(L)$, it will be. Finally, in the theorem we have allowed $L$ to carry an arbitrary local system, which modifies the Fukaya \ai structure of $L$ by counting the same
punctured holomorphic disks as in the case without a local system with coefficients which are the values of $\rho$ on the boundary loops of such disks. The algebras $HF^*(L,L)$, $HH^*(L,L)$ get modified accordingly, although their dependence on $\rho$ is not reflected by our notation, as mentioned earlier. We allow non-trivial local systems in view of our application to toric fibres, and will only need the trivial local system $\rho\equiv 1$ for applications to real Lagrangians.

To complete the statement of Theorem~\ref{th:CO_invt_lag}, we need to explain the sign $(-1)^{\epsilon(l)}=\pm 1$ appearing in it. By a spin Lagrangian, we always mean a Lagrangian with a fixed spin structure (rather than admitting one). We have two natural trivialisations of $TL$ over the loop ${\gamma_t(q)}\subset L$: the one induced from a fixed basis of $T_qL$ by the Hamiltonian loop $\gamma$, and the one determined by the spin structure on $L$. We put $\epsilon(l)$ to be~0 if the two trivialisations agree, and~1 otherwise.

\smallskip
{\it Outline of proof.} It has been mentioned  earlier that part (a) of Theorem~\ref{th:CO_invt_lag} is an easy consequence of the paper by Charette and Cornea~\cite{CC16}.
The proof of parts~(b) and~(c) also starts by using a result from that paper, and then the main step is an explicit computation of $$\CO^1(\S(\gamma))|_{CF^1(L,L)}\co CF^1(L,L)\to CF^0(L,L)$$ on cochain level, which turns out to be dual to taking the $\gamma$-orbit of a point up to the factor $(-1)^{\epsilon(l)}\rho(l)$: this is Proposition~\ref{prop:compute_CO_1}. The final step is to check whether the computed nontrivial piece of the Hochschild cocycle $\CO^*(\S(\gamma))$ survives to cohomology; this is controlled by  equations~$(*)$,~$(**)$.

\begin{remark}
\label{rem:toric_positive}
In our examples
we will never encounter a non-trivial sign $(-1)^{\epsilon(l)}$: for real Lagrangians we shall be working over characteristic~two, and for toric fibres with the standard spin structure, this sign is easily seen to be $+1$. The examples when the sign $(-1)^{\epsilon(l)}$ is negative have been found by J.~Smith; they occur for $PSU(N-1)$-homogeneous Lagrangians \cite[Remark~5.3.2]{Smi17}. We are grateful to him for pointing out the presence of this sign in general, which was missed in the previous versions of the paper.
\end{remark}

\subsection{The split-generation criterion}
We will now briefly discuss the split-gener\-ation criterion in more detail, particularly because we wish to pay attention to both $\chr \k=2$ and $\chr\k\neq 2$ cases. We continue to denote by $L\subset X$ a monotone Lagrangian submanifold with minimal Maslov number at least 2, which is oriented and spin if $\chr \k\neq 2$. If $\chr\k=2$, we allow $L$ to be non-orientable.
Consider the quantum multiplication by the first Chern class as an endomorphism of quantum cohomology, $-*c_1(X)\co QH^*(X)\to QH^*(X)$. If $\k$ is algebraically closed, we have an algebra decomposition $QH^*(X)=\oplus_w QH^*(X)_w$ where $QH^*(X)_w$ is the generalised $w$-eigenspace of $-*c_1(X)$, $w\in \k$.

Recall that $w(L)\in \k$ denotes the obstruction number of $L$, i.e.~the count of Maslov index~2 disks with boundary on $L$. By an observation of Auroux, Kontsevich and Seidel, $\CO^0(2c_1)=2w(L)\cdot 1_L$, which in $\chr \k\neq 2$ implies that $\CO^0(c_1)=w(L)\cdot 1_L$, see e.g.~\cite{She13}. Now suppose that $\chr\k=2$ and $c_1(X)$ lies in the image of $H^2(X,L;\k)\to H^2(X;\k)$, which is true if $L$ is orientable (because the Maslov class goes to twice the Chern class under $H^2(X,L;\Z)\to H^2(X;\Z)$, and the Maslov class of an orientable manifold is integrally divisible by two). In this case, the same argument shows again that $\CO^0(c_1)=w(L)\cdot 1_L$.
 This way one deduces the following lemma, which is well-known but usually stated only for $\chr \k\neq 2$.

\begin{lemma}
\label{lem:eigenvalue_split}
 For $\k$ of any characteristic, if $L$ is orientable, then $\CO^0\co QH^*(X)\to HF^*(L,L)$ vanishes on all summands except maybe $QH^*(X)_{w(L)}$.\qed
\end{lemma}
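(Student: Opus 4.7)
\medskip
\noindent\textit{Proof plan.} The plan is to turn the formula $\CO^0(c_1(X))=w(L)\cdot 1_L$ into an intertwining relation between the operator $-*c_1(X)$ on $QH^*(X)$ and the scalar $w(L)$ acting on $HF^*(L,L)$, and then to read off the generalised eigenspace decomposition through the map $\CO^0$.

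First I would record the input: $\CO^0$ is a unital $\k$-algebra homomorphism, and under the orientability hypothesis one has $\CO^0(c_1(X))=w(L)\cdot 1_L$. In $\chr\k\ne 2$ this is the Auroux--Kontsevich--Seidel identity $\CO^0(2c_1(X))=2w(L)\cdot 1_L$ divided by $2$. In $\chr\k=2$, orientability of $L$ implies the Maslov class is divisible by $2$ in $H^2(X,L;\Z)$, so $c_1(X)$ admits a lift to $H^2(X,L;\k)$; the chain-level version of the AKS argument, in the form recalled in \cite{She13} just above the statement of the lemma, then delivers the same identity directly (not merely its double, which would be vacuous in characteristic $2$).

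Next, I would use that $\CO^0$ is multiplicative and that $\CO^0(c_1(X))=w(L)\cdot 1_L$ is central in $HF^*(L,L)$ to obtain, for every $Q\in QH^*(X)$,
$$
\CO^0(c_1(X)*Q)=\CO^0(c_1(X))\cdot\CO^0(Q)=w(L)\cdot\CO^0(Q).
$$
By induction on $N$, this gives $\CO^0\bigl((c_1(X)*\free-w\cdot\id)^N Q\bigr)=(w(L)-w)^N\cdot\CO^0(Q)$ for any scalar $w\in\k$.

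Finally, for $Q\in QH^*(X)_w$ with $w\ne w(L)$, by definition of the generalised eigenspace there is some $N\ge 1$ with $(c_1(X)*\free-w\cdot\id)^N Q=0$. Applying the identity of the previous paragraph yields $(w(L)-w)^N\cdot\CO^0(Q)=0$ in $HF^*(L,L)$, and since $(w(L)-w)^N\in\k^\x$ is invertible, $\CO^0(Q)=0$. I do not expect any real obstacle here: the only subtle point is the input formula $\CO^0(c_1(X))=w(L)\cdot 1_L$ in characteristic $2$, which is precisely what the orientability hypothesis is tailored to secure via the lift of $c_1(X)$ to $H^2(X,L;\k)$.
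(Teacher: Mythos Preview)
Your proposal is correct and follows the same route the paper indicates: the paper establishes the key input $\CO^0(c_1(X))=w(L)\cdot 1_L$ (via the Auroux--Kontsevich--Seidel identity, with orientability supplying the lift of $c_1$ in characteristic $2$) and then simply writes ``This way one deduces the following lemma,'' marking it with a \qed. Your argument spells out exactly that deduction---the standard intertwining between $-*c_1(X)$ and the scalar $w(L)$, followed by the generalised eigenspace annihilation---so there is nothing to compare.
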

(If $w(L)$ is not an eigenvalue of $-*c_1(X)$, then $\CO^0$ vanishes altogether, and it follows that $HF^*(L,L)=0$. Recall that $L$ is required to be monotone.) The same vanishing statement is expected to hold for the full map $\CO^*$.
Keeping this  vanishing in mind, we see that the ``naive'' version of the split-generation criterion stated in the introduction, that $\CO^*\co QH^*(X)\to HH^*(L,L)$ is injective, can only be useful when $\chr\k=2$ and $L$ is non-orientable. In other cases it must be replaced by a more practical criterion which does not ignore the eigenvalue decomposition; we will now state both versions of the criterion.
Let $\F(X)_{w}$ denote the Fukaya category whose objects are monotone Lagrangians in $X$ with minimal Maslov number at least 2, oriented and spin if $\chr \k \neq 2$, and  whose obstruction number equals $w\in \k$. 

\begin{theorem}
\label{th:generation_crit}
Let $L_1,\ldots,L_n\subset X$ be Lagrangians which are objects of $\Fuk(X)_w$, and $\mathcal{G}\subset \Fuk(X)_w$ be the full subcategory generated by $L_1,\ldots, L_n$. Then $\mathcal{G}$ split-generates $\Fuk(X)_w$ if either of the two following statements hold.

\begin{enumerate}
 \item[(a)] $\chr\k\neq 2$,
and $\CO^*|_{QH^*(X)_w}\co QH^*(X)_w\to HH^*(\mathcal{G})$ is injective.

\item[(b)] $\k$ is arbitrary, and $\CO^*\co QH^*(X)\to HH^*(\mathcal{G})$ is injective.\qed
\end{enumerate}
\end{theorem}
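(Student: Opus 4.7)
The plan is to derive both parts from the modern (Ganatra) formulation of Abouzaid's split-generation criterion: a full subcategory $\mathcal{G}\subset \Fuk(X)_w$ split-generates the whole of $\Fuk(X)_w$ as soon as the appropriate unit $e_w$ of quantum cohomology lies in the image of the open-closed map
$$
\OC\co HH_*(\mathcal{G})\to QH^*(X).
$$
The bridge between this surjectivity-onto-the-unit statement and the injectivity of the closed-open map is the Cardy relation, a TQFT-type identity
$$
\langle\OC(\alpha),\,Q\rangle_X \;=\; \langle\alpha,\,\CO^*(Q)\rangle_{\mathcal{G}}\qquad\text{for all }\alpha\in HH_*(\mathcal{G}),\ Q\in QH^*(X),
$$
in which $\langle-,-\rangle_X$ is the Poincar\'e pairing on $QH^*(X)$ and $\langle-,-\rangle_{\mathcal{G}}$ is the natural pairing between Hochschild homology and cohomology; both sides arise from the same annulus moduli problem with one interior and one boundary marked point.

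\textbf{Execution.} The first step is to record non-degeneracy of the two pairings. The Poincar\'e pairing on $QH^*(X)$ is non-degenerate, and because $-*c_1(X)$ is self-adjoint with respect to it, the pairing stays non-degenerate on each generalised eigensummand $QH^*(X)_w$ and pairs distinct eigensummands trivially; this is what lets us isolate a single summand in case~(a). The pairing between $HH_*$ and $HH^*$ of a cohomologically finite monotone Fukaya category is non-degenerate thanks to the (weak) Calabi--Yau structure, for which I would defer to Ganatra. With both non-degeneracies in hand, the second step is a one-line duality argument: assume $\CO^*$ is injective (on $QH^*(X)_w$ in case~(a), on all of $QH^*(X)$ in case~(b)), and suppose for contradiction that $e_w$ is not in the image of $\OC$. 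Poincar\'e duality then produces some $Q$ pairing non-trivially with $e_w$ but trivially with every element of $\OC(HH_*(\mathcal{G}))$; by the Cardy relation $\CO^*(Q)$ pairs trivially with all of $HH_*(\mathcal{G})$, so $\CO^*(Q)=0$ by non-degeneracy of the Hochschild pairing, contradicting injectivity. The final step is Abouzaid's module-theoretic argument: once $e_w\in\OC(HH_*(\mathcal{G}))$, the $QH^*(X)$-module structure on Hochschild invariants of $\mathcal{G}$ manufactures a split-generating idempotent on every object of $\Fuk(X)_w$, which finishes the proof.

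\textbf{Main obstacle.} The hard part is foundational rather than conceptual: the Cardy relation, the open-closed map and the weak Calabi--Yau pairing on Hochschild invariants all have to be set up with signs, orientations, local-system coefficients $\rho$ and coefficients in a field of arbitrary characteristic handled simultaneously and consistently. These foundations are not built in this paper, and as indicated in the introduction the cleanest way to justify the theorem is to specialise the general split-generation theorem for (weakly) Calabi--Yau \ai categories of Ganatra, supplemented in the monotone setting by Amorim--Smith and Sheridan, to our compact monotone situation, rather than to reprove it here in situ.
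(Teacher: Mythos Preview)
The paper does not actually prove this theorem: the statement carries a \qed\ box, and the surrounding text attributes the result to Abouzaid, Fukaya, Oh, Ohta and Ono, pointing to Sheridan and Ritter--Smith for written accounts, with the remark that ``usually only part (a) is stated, but it is easy to check the same proof works for part (b) as well.'' So there is no in-house argument to compare against.

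Your outline is precisely the standard route taken in those references (in particular Sheridan's): the equivalence between injectivity of $\CO^*$ and $\OC$ hitting the unit, established via the duality/Cardy identity and non-degeneracy of the relevant pairings, followed by Abouzaid's generation criterion. Indeed, later in this paper (proof of Lemma~\ref{lem:split_gen_prod}) the author invokes exactly this equivalence, citing \cite{She13}. One point worth flagging: the step that genuinely carries weight in your sketch is the non-degeneracy of the pairing between $HH_*(\mathcal{G})$ and $HH^*(\mathcal{G})$, which is where the weak Calabi--Yau / properness input enters; you correctly identify this as something to import rather than reprove. For part~(b) you should also make explicit which unit $e_w$ you mean when the eigensummand decomposition may not be available (e.g.\ $\chr\k=2$ with $L$ non-orientable): the cleanest formulation is that $\OC$ hits the unit of $QH^*(X)$ itself, which is what full injectivity of $\CO^*$ on $QH^*(X)$ buys you via your duality argument.
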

In the monotone case, this theorem is due to Ritter and Smith \cite{RiSmi17} and Sheridan \cite{She13}. It is more common to only state part (a), but it is easy to check the same proof works for part (b) as well. (In part (a), we could also allow $\chr\k=2$, if $L$ is orientable.)
Theorem~\ref{th:generation_crit} is most easily applied when  $QH^*(X)_{w}$ is 1-dimensional: because $\CO^*$ is unital, it  automatically becomes injective. We  are going to apply this theorem in more complicated cases. Before we proceed, let us mention one easy corollary of split-generation. We say that $L_1,\ldots,L_n$ split-generate the Fukaya category when $\mathcal{G}$ does.

\begin{lemma}
\label{lem:split_gen_implies_non_disp}
 If Lagrangians $L_1,\ldots,L_n\subset X$ split-generate $\Fuk(X)_w$, and $L\subset X$ is another Lagrangian which is an object of $\Fuk(X)_w$ with $HF^*(L,L)\neq 0$, then $L$ has non-empty intersection, and non-zero Floer cohomology, with one of the Lagrangians $L_i$.\qed
\end{lemma}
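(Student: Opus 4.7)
The plan is to argue by contrapositive: assume that $HF^*(L, L_i) = 0$ for every $i \in \{1, \ldots, n\}$, and deduce $HF^*(L, L) = 0$, contradicting the hypothesis. The first step is to package the Yoneda-type functor $HF^*(L, -)$ as a cohomological functor on the triangulated category $H^0(\mathrm{Tw}\,\Fuk(X)_w)$ of twisted complexes, via the $A_\infty$-module structure of the right Yoneda module of $L$. This makes sense for the monotone, fixed-$w$ Fukaya category used throughout the paper, also in the presence of local systems.

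With that in hand, split-generation of $\Fuk(X)_w$ by $L_1, \ldots, L_n$ means, by definition, that $L$ is a direct summand in $H^0(\mathrm{Tw}\,\Fuk(X)_w)$ of some twisted complex $M$ built iteratively out of shifts of the $L_i$'s. Any cohomological functor that vanishes on each $L_i$ must vanish on every iterated mapping cone of shifts of the $L_i$'s --- this follows inductively in the length of the twisted complex from the long exact sequences associated with distinguished triangles --- and hence on every summand of such a cone. Applying this to $HF^*(L, -)$ gives $HF^*(L, M) = 0$, and therefore $HF^*(L, L) = 0$, the desired contradiction. Consequently there exists at least one index $i$ with $HF^*(L, L_i) \neq 0$.

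It remains to deduce from $HF^*(L, L_i) \neq 0$ that $L \cap L_i \neq \emptyset$. If $L$ and $L_i$ were disjoint, then by compactness one could choose a $C^1$-small Hamiltonian perturbation $L_i'$ of $L_i$ that is transverse to $L$ and still disjoint from it; the Floer cochain complex $CF^*(L, L_i')$ would then have an empty generating set, and Hamiltonian invariance would force $HF^*(L, L_i) = 0$, a contradiction.

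The main obstacle is purely foundational --- verifying the cochain-level functoriality needed to invoke the triangulated-envelope argument in the monotone Fukaya category with obstruction value $w$ and possibly non-trivial local systems --- but this has been carried out in the references already cited for the split-generation criterion itself, so no new ingredient is required.
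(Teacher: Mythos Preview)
Your argument is correct and is the standard one. The paper itself omits the proof entirely (the lemma is stated with a terminal \qed), treating it as a well-known consequence of split-generation; your contrapositive via the Yoneda module and long exact sequences for twisted complexes is exactly how one would unpack that.
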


\subsection{$\CO^0$ for real toric Lagrangians}
In this subsection we state a theorem that computes $\CO^0$ for real Lagrangians in toric manifolds. Using it, it is easy to identify the cases when $\CO^0$ is injective (and the split-generation follows immediately), and the cases when $\CO^0$ is not injective and therefore a further study of $\CO^*$ is required to establish the split-generation. Our subsequent goal is to apply the main result, Theorem~\ref{th:CO_invt_lag}, to some examples of the later type.

Let $X$ be a (smooth, compact) toric Fano variety with minimal Chern number at least 2, i.e.~$\langle c_1(X),H_2(X;\Z)\rangle=N\Z$, $N\ge 2$. As a toric manifold $X$ has a canonical anti-holomorphic involution $\tau\co X\to X$. Its fixed locus is the so-called real Lagrangian $L\subset X$ which is smooth \cite[p.~419]{Du83}, monotone and whose minimal Maslov number equals the minimal Chern number of $X$ \cite{Ha13}. When speaking of such real Lagrangians, we will always be working over a field $\k$ of characteristic $2$. In particular, there is the Frobenius map:
$$\F\co QH^*(X)\to QH^{2*}(X), \quad \F(x)=x^2.$$ 
Because $\chr \k=2$, $\F$ is a map of unital algebras. We have reflected in our notation that $\F$ multiplies the $\Z/2N$-grading by two. A classical theorem of Duistermaat \cite{Du83} constructs, again in $\chr \k=2$, the isomorphisms $H^i(L)\cong H^{2i}(X)$. We can package these isomorphisms into a single isomorphism of unital algebras,
$$
\D\co H^{2*}(X)\xrightarrow{\cong} H^*(L).
$$
Let us now recall a recent theorem of Haug \cite{Ha13}.
\begin{theorem}
\label{th:haug}
 If $\chr \k=2$, then  $HF^*(L,L)\cong H^*(L)$ as vector spaces. Using the identification coming from a specific perfect Morse function from~\cite{Ha13}, and also indentifying $QH^*(X)\cong H^*(X)$, the same map
$$
\D\co QH^{2*}(X)\xrightarrow{\cong}HF^*(L,L)
$$
is again an isomorphism of unital algebras.\qed
\end{theorem}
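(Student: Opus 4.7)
\smallskip
\noindent\textit{Proof proposal.} My plan is to realise $\D$ at the cochain level from a $\tau$-invariant Morse datum on $X$ and then match the quantum product on $QH^{2*}(X)$ with the Floer product on $HF^*(L,L)$ via an anti-holomorphic doubling argument that is particularly clean in characteristic $2$. First I would pick a $\tau$-invariant Morse function $F\co X\to\R$ whose restriction to $L$ is Haug's perfect Morse function $f$. Since $H^*(L;\k)\cong H^{2*}(X;\k)$ is concentrated in even degree of $X$, one can arrange that the critical points of $F$ lying off $L$ occur in $\tau$-conjugate pairs, hence contribute trivially in characteristic $2$, while the critical points on $L$ are exactly those of $f$ with $X$-index twice the $L$-index. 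Together with a $\tau$-invariant almost complex structure $J$, this supplies a chain-level identification $\D$ of the pearl complex computing $QH^{2*}(X)$ with the pearl complex computing $HF^*(L,L)$, which on generators is the identity. This is Duistermaat's classical statement upgraded with the auxiliary data needed for the quantum/Floer refinement.

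The key geometric input for the product statement is the reflection principle: any $J$-holomorphic disk $u\co (D^2,\partial D^2)\to (X,L)$ extends by Schwarz reflection to a $J$-holomorphic sphere $\widetilde u\co S^2\to X$ with $\tau\circ\widetilde u=\widetilde u\circ c$, where $c$ is complex conjugation on $S^2$, and every $\tau$-equivariant sphere arises this way. Pearl trees contributing to the quantum product of two classes in $QH^{2*}(X)$ thus carry a $\Z/2$-action induced by $\tau$; non-equivariant trees pair up and cancel in characteristic $2$, leaving only the $\tau$-equivariant configurations, which under the doubling bijection are precisely the pearl trees contributing to the corresponding Floer product on $HF^*(L,L)$. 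The Maslov index of a disk on $L$ matches the Chern number of its double, so the gradings line up. This mod $2$ matching of counts forces $\D$ to intertwine the two products, and unitality is immediate from degree zero.

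The main obstacle is $\tau$-equivariant transversality: one needs $J$ within the $\tau$-invariant class so that the moduli of non-equivariant spheres admits a free $\Z/2$-action (justifying the cancellation), while the real moduli of disks on $L$ is simultaneously regular. Standard arguments on the Banach manifold of $\tau$-invariant $J$'s should work, but matching the perturbations across the doubling bijection requires care, especially for nodal configurations containing a mixture of real and conjugate-pair sphere bubbles. A secondary bookkeeping task is to verify that Haug's specific Morse function on $L$ extends to a $\tau$-invariant Morse function on $X$ with the prescribed index-doubling property; this should follow from the toric moment-map structure, since the components of the moment map are automatically $\tau$-invariant and their critical points lie in $L$.
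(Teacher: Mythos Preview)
The paper does not prove this theorem; it is explicitly attributed to Haug \cite{Ha13} and stated with a closing \qed, so there is no ``paper's own proof'' to compare against. Your proposal is therefore not a comparison target for this paper but rather a sketch of Haug's result itself.

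That said, your outline is essentially the correct strategy, and it is the one Haug follows: the Schwarz reflection/doubling trick identifies $J$-holomorphic disks on $(X,L)$ with $\tau$-equivariant $J$-holomorphic spheres in $X$, and in characteristic $2$ the non-real spheres cancel in $\tau$-conjugate pairs, leaving the real configurations to match the disk counts. The index-doubling (Maslov of a disk equals Chern number of its double) and the use of a $\tau$-invariant perfect Morse function are also right. The pearl model on both sides is a reasonable way to package the product comparison.

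Where your sketch is thin is exactly where you flag it: $\tau$-equivariant transversality and the compatibility of perturbations across the doubling bijection are the substantive technical points, and ``standard arguments on the Banach manifold of $\tau$-invariant $J$'s should work'' is not a proof. One also has to be careful that the $\Z/2$-action on moduli of non-real spheres is genuinely free (multiply-covered spheres with real image are the danger; the Fano/monotone hypothesis helps here). Your remark about extending Haug's Morse function to a $\tau$-invariant one on $X$ via moment-map components is on the right track, but note that Haug actually constructs his perfect Morse function on $L$ precisely by restricting suitable toric data, so the extension is already built in rather than something you need to manufacture afterwards.
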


It turns out that it is possible to completely compute $\CO^0$ for real toric Lagrangians.
This rather quickly follows by combining the works of  Charette and Cornea~\cite{CC16}, Hyvrier~\cite{Hy16}, and McDuff and Tolman \cite{MDT06}; we explain this theorem in Section~\ref{sec:real_toric}.

\begin{theorem}
\label{th:toric_CO_0}
The diagram below commutes.
$$
\xymatrix{
 QH^*(X)\ar[r]^{\F} \ar[dr]_{\CO^0} &QH^{2*}(X)\ar[d]_-\D^-\cong\\
& HF^*(L,L)
}
$$
\end{theorem}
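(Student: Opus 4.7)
The plan is to reduce the identity $\CO^0 = \D\circ \F$ to a check on a generating set of the $\k$-algebra $QH^*(X)$. All three maps in the diagram are unital algebra homomorphisms: $\CO^0$ by construction, $\F$ because $\chr \k = 2$, and $\D$ by Theorem~\ref{th:haug}. Hence both $\CO^0$ and $\D\circ\F$ are unital algebra maps $QH^*(X)\to HF^*(L,L)$, and it suffices to verify $\CO^0(\alpha) = \D(\alpha^2)$ for $\alpha$ in any $\k$-algebra generating set of $QH^*(X)$.

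For Fano toric $X$, I would take the generators to be the Seidel elements $\S(\gamma_v)$ of the toric $S^1$-subgroups $\gamma_v\subset T$ indexed by the primitive edge vectors $v$ of the moment polytope. McDuff and Tolman~\cite{MDT06} prove that these $\S(\gamma_v)$ generate $QH^*(X)$ and supply an explicit formula for each of them in terms of the toric divisor classes together with quantum corrections. Once this is in hand, one evaluates $\CO^0$ on Seidel elements by combining the Charette--Cornea formalism~\cite{CC13} with the computations of Hyvrier~\cite{Hy13}. The subtlety is that $\tau$ inverts the toric action, so $\gamma_v$ does not preserve $L$ and Theorem~\ref{th:CO_invt_lag}(a) does not apply directly; however, the moduli of $J$-holomorphic sections defining $\CO^0(\S(\gamma_v))$ is naturally $\tau$-equivariant, and over $\chr \k = 2$ the non-real configurations cancel in $\tau$-orbits of size two, leaving only the real sections. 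These double to closed spheres in $X$, and their weighted count realizes $\CO^0(\S(\gamma_v))$ as $\D$ applied to $\S(\gamma_v)^2 = \F(\S(\gamma_v))$. Combined with Haug's identification $\D([D_v]) = [L_v]$ on divisor classes and the quantum Stanley--Reisner relations, this verifies the desired equality on every generator, after which multiplicativity propagates it throughout $QH^*(X)$.

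The main technical obstacle is the doubling-and-cancellation step: one has to verify that the $\tau$-equivariant moduli underlying $\CO^0(\S(\gamma_v))$ admit transverse perturbations for which non-real configurations genuinely pair off, and that the residual real contribution reproduces $\F(\S(\gamma_v))$ after applying $\D$. Once this identification is in place, the remainder of the proof is a careful combinatorial reading that assembles the three cited inputs; this is why the result can be advertised as a rather quick corollary of~\cite{CC13,Hy13,MDT06}.
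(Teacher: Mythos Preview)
Your overall strategy is sound and matches the paper: both $\CO^0$ and $\D\circ\F$ are unital $\k$-algebra maps, so it suffices to check equality on the Seidel elements of the toric circle actions, which generate $QH^*(X)$ by McDuff--Tolman. The paper does exactly this reduction.

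Where your argument diverges from the paper is in the mechanism for the key identity on generators, and here there is a genuine gap. Your proposed route---make the section moduli for $\CO^0(\S(\gamma_v))$ $\tau$-equivariant, cancel non-real configurations in characteristic~2, then double the remaining real disks to spheres---is not what any of the three cited papers actually establish, and you yourself flag it as the ``main technical obstacle'' without carrying it out. In particular, Hyvrier's doubling argument does not compute $\CO^0(\S(\gamma_v))$; it computes the \emph{Lagrangian} Seidel element $\S_L(\alpha)\in HF^*(L,L)$ of a Lagrangian loop $\alpha$. Your sketch never introduces such an $\alpha$, so the bridge from Charette--Cornea to Hyvrier is missing.

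The paper's argument supplies exactly this bridge, and it is where the Frobenius map appears naturally rather than through an ad hoc cancellation. The toric loop $\gamma$ does not preserve $L$, but its time-$\tfrac{1}{2}$ map does: $\gamma_{1/2}(L)=L$. Hence $\alpha=\{\gamma_t(L)\}_{t\in[0,1/2]}$ is a loop of Lagrangians with $\alpha^2=\{\gamma_t(L)\}_{t\in[0,1]}$. Charette--Cornea then gives $\CO^0(\S(\gamma))=\S_L(\alpha^2)=\S_L(\alpha)^2=\F(\S_L(\alpha))$, where the squaring uses multiplicativity of the Lagrangian Seidel element. Now Hyvrier's theorem applies directly to the half-loop: $\S_L(\alpha)=[L\cap D]^*$, and Haug's construction identifies $[L\cap D]^*=\D(D^*)$. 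Combining, $\CO^0(D^*)=\F(\D(D^*))=\D(\F(D^*))$, the last step because $\D$ is a ring map and hence commutes with Frobenius. This half-loop observation is the missing idea in your proposal; once you have it, no equivariant transversality or pairing argument is needed.
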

In particular, $\CO^0$ is injective if and only if $\F$ is injective.

\subsection{Split-generation for the real projective space}
We conclude the introduction by proving Proposition~\ref{prop:CO_RPn_Inject} and Corollary~\ref{cor:RPn_generate}.
The crucial idea is that when $n$ is odd, the kernel of $\CO^0\co QH^*(\CP^n)\to HF^*(\RP^n,\RP^n)$  is the ideal generated by the Seidel element of a non-trivial  Hamiltonian loop preserving $\RP^n$; this allows to apply Theorem~\ref{th:CO_invt_lag} and get new information about $\CO^*$.
 Recall that $QH^*(X)\cong\k[x]/(x^{n+1}-1)$ and $w(\RP^n)=0$, because the minimal Maslov number of $\RP^n$ equals $n+1$ (when $n=1$, we still have $w(S^1)=0$ for $S^1\subset S^2$).

\begin{proof}[Proof of Proposition~\ref{prop:CO_RPn_Inject}.]
If $n$ is even, the Frobenius map on $QH^*(\CP^n)$ is injective, so by Theorem~\ref{th:toric_CO_0}, $\CO^0\co QH^*(\CP^n)\to HF^*(\RP^n,\RP^n)$ is injective, and hence $\CO^*$ too.  

Now suppose $n$ is odd and denote $n=2p-1$. Given $\chr\k=2$, we have $QH^*(\CP^n)\cong\k[x]/(x^p+1)^2$, so $\ker \F=\ker \CO^0$ is the ideal generated by $x^p+1$. Consider the Hamiltonian loop $\gamma$ on $\CP^n$ which in homogeneous co-ordinates $(z_1:\ldots: z_{2p})$ is the rotation
$\left( \begin{smallmatrix}
  \cos t&\sin t\\
-\sin t&\cos t
 \end{smallmatrix}
\right)$, $t\in [0,\pi]$,
applied simultaneously to the pairs $(z_1,z_2),\ldots,(z_{2p-1},z_{2p})$. Note that $t$ runs to $\pi$, not $2\pi$.
This loop is Hamiltonian isotopic to the loop
$$
(z_0:\ldots:z_{2p-1})\mapsto (e^{2it}z_0:z_1:\ldots:e^{2it}z_{2p-1}:z_{2p}), \quad t\in[0,\pi],
$$
so $\S(\gamma)=x^p$, see \cite{MDT06}. The loop $\gamma$ obviously preserves the real Lagrangian $\RP^n\subset \CP^n$, and its orbit $l$ is a generator of $H_1(\RP^n)\cong \k$. Taking $y\in H^1(\RP^n)$ to be the generator, we get $\langle y,l\rangle =1$, and the right hand side of equation $(**)$ from Theorem~\ref{th:CO_invt_lag}  equals $\CO^0(Q)$. On the other hand, the product on $HF^*(\RP^n,\RP^n)$ is commutative by Theorem~\ref{th:haug}, so the left hand side of $(**)$ necessarily vanishes.  We conclude that the hypothesis of Theorem~\ref{th:CO_invt_lag}(c) is satisfied for any $Q\notin\ker\CO^0$. 

Let us prove that $\CO^*(P)\neq 0$ for each nonzero $P\in QH^*(\CP^n)$. If $\CO^0(P)\neq 0$, we are done, so it suffices to
suppose that $\CO^0(P)=0$. It means that 
$$P=(x^p+1)*Q=(\S(\gamma)+1)*Q$$ 
for some $Q\in QH^*(\CP^n)$. Note that if $Q\in \ker \CO^0=\ker \F$ then $P\in (\ker \F)^2=\{0\}$. So if $P\neq 0$, then $\CO^0(Q)\neq 0$, and thus $\CO^*(P)\neq 0$  by Theorem~\ref{th:CO_invt_lag}(c) and the observation earlier in this proof.
\end{proof}

\begin{remark}
When $n$ is even, $c_1(\CP^n)$ is invertible in $QH^*(\CP^n)$, so the 0-eigenspace $QH^*(\CP^n)_0$ is trivial; but $L$ is non-orientable, so this does not contradict Lemma~\ref{lem:eigenvalue_split}.
On the other hand, when $n$ is odd, $L$ is orientable but $c_1(\CP^n)$ vanishes in $\chr \k=2$, so the whole $QH^*(\CP^n)$ is its 0-eigenspace; this is also consistent with Lemma~\ref{lem:eigenvalue_split}.
\end{remark}

\begin{proof}[Proof of Corollary~\ref{cor:RPn_generate}]
 This follows from Proposition~\ref{prop:CO_BlP9_Inject} and Theorem~\ref{th:generation_crit}(b).
\end{proof}

The same trick of finding a real Hamiltonian loop whose Seidel element generates $\ker \CO^0$
works for some other toric manifolds which have ``extra symmetry'' in addition to the toric action, like a Hamiltonian action of $SU(2)^{\dim_\C X/2}$ which was essentially used above. As already mentioned, we will provide more explicit examples in Section~\ref{sec:real_toric}.

\subsection*{Acknowledgements}
The author is most grateful to his supervisor Ivan Smith for many useful comments as well as constant care and enthusiasm. Yank\i~Lekili and Jack~Smith have provided valuable feedback and pointed out two inaccuracies in the previous versions of this paper.
The paper has also benefitted from discussions with Mohammed Abouzaid, Lino Amorim, Fran\c cois Charette, Georgios Dimitroglou Rizell, Jonny Evans, Alexander Ritter, Paul Seidel, Nick Sheridan and Renato Vianna. The referee's suggestions on improving the exposition have been very useful.

The author was funded by the Cambridge Commonwealth, European and International Trust, and acknowledges travel funds from King's College, Cambridge.

\section{Proof of Theorem~\ref{th:CO_invt_lag}}
\label{sec:main_proof}
Let $X$ be a monotone symplectic manifold and $w\in \k$. We recall that the objects in the monotone Fukaya category $\Fuk(X)_w$  are monotone Lagrangian submanifolds $L\subset X$ with minimal Maslov number at least 2, oriented and spin if $\chr\k\neq 2$,
equipped with local systems $\rho\co \pi_1(L)\to \k^\x$, whose count of Maslov 2 disks (weighted using $\rho$) equals $w$. We will use the definition of the Fukaya category based on achieving transversality by explicit Hamiltonian perturbations of the pseudo-holomorphic equation. This setup was developed by Seidel \cite{SeiBook08} for exact manifolds and carries over to monotone ones, see \cite{RiSmi17, She13, AFO3}.
There is a notion of bounding cochains from \cite{FO3Book}, generalising the notion of a local system, and all results are expected carry over to them as well.

\subsection{A theorem of Charette and Cornea}
Suppose $\gamma=\{\gamma_t\}_{t\in S^1}$ is a loop of Hamiltonian symplectomorphisms on $X$.
As explained by Seidel in \cite[Section (10c)]{SeiBook08}, the loop $\gamma$ gives rise to a natural transformation 
$\gamma^\sharp$ from the identity functor on $\Fuk(X)_w$ to itself. Any such natural transformation  is a cocycle of the Hochschild cochain complex $CC^*(\Fuk(X)_w)$ \cite[Section (1d)]{SeiBook08}. Denote the corresponding Hochschild cohomology class by 
$$[\gamma^\sharp]\in HH^*(\Fuk(X)_w).$$
We denote, as earlier, the closed-open map by $\CO^*\co QH^*(X)\to HH^*(\Fuk(X)_w)$ and the Seidel element by $\S(\gamma)\in QH^*(X)$.
The following theorem was proved by Charette and Cornea \cite{CC16}.

\begin{theorem}
\label{th:Charette_Cornea}
If we take for $\Fuk(X)_w$ the Fukaya category of Lagrangians with trivial local systems only, then 
$\CO^*(\S(\gamma))=[\gamma^\sharp]$. \qed
\end{theorem}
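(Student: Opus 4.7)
The plan is to realize both sides of the equality as counts of pseudo-holomorphic curves in the total space of the Hamiltonian fibration $E_\gamma \to S^2$ associated with the loop $\gamma$. Recall that $\S(\gamma)$ is by definition a weighted count of sections of $E_\gamma$, while $\gamma^\sharp$ is constructed in Seidel \cite[Section (10c)]{SeiBook08} by counting sections of the restricted fibration $E_\gamma|_{D^2} \to D^2$ over a disk, with boundary on a chain of Lagrangians. I will argue that applying $\CO^*$ to a section count in the closed sector produces, after a gluing or neck-stretching argument, exactly the open sector section count defining $\gamma^\sharp$.

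First I would spell out $\CO^*(\S(\gamma))$ at the chain level as a Hochschild cochain. The length-$k$ component $\CO^k(\S(\gamma))$ sends morphisms $x_k \otimes \cdots \otimes x_1$ with $x_i \in CF^*(L_{i-1},L_i)$ to a count of perturbed holomorphic disks in $X$ with boundary on the chain $L_0,\ldots,L_k$, one interior marked point, and the interior point evaluated against a cycle representing $\S(\gamma)$. Since that cycle is itself realised by evaluating sections of $E_\gamma$ at a fibre point, one can combine the two moduli problems into a single parameterised moduli space $\mathcal{M}$ of maps into $E_\gamma$: over a neighbourhood of the chosen fibre one has a section $u_{\mathrm{sec}}$, and over the complement one has a disk $u_{\mathrm{disk}}$ in $X$, glued at the matching condition. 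Standard gluing collapses this bubbled configuration to a single pseudo-holomorphic disk in $E_\gamma|_{D^2}$ with boundary on the same chain of Lagrangians (viewed as constant families in the total space).

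Next I would identify that disk count with Seidel's definition of $\gamma^\sharp$ in \cite[Section (10c)]{SeiBook08}. Seidel constructs $\gamma^\sharp$ precisely as the Hochschild cocycle whose length-$k$ component counts sections of $E_\gamma|_{D^2}$ with boundary on a chain $L_0,\ldots,L_k$ in $X \hookrightarrow E_\gamma$; so the gluing above yields the same moduli space, up to choice of perturbation data. The cochain-level equality $\CO^*(\S(\gamma)) \simeq \gamma^\sharp$ in $CC^*(\Fuk(X)_w)$ is then deduced up to Hochschild coboundary by running a one-parameter family of perturbation data interpolating between the two setups; the boundary strata of the $1$-dimensional components of the parameterised moduli space assemble into the Hochschild differential applied to an explicit cochain, which provides the homotopy.

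The main obstacle is making the gluing/interpolation argument rigorous in a way compatible with the perturbation schemes used on both sides. Specifically, the closed-open map uses domain-dependent Hamiltonian perturbations on disks with an interior marked point, while the Seidel element uses perturbations on sections of $E_\gamma$; one must choose a family of consistent perturbation data for sections of $E_\gamma|_{D^2}$ that degenerates at one end to a Seidel section bubbling off a disk in $X$ (giving $\CO^*(\S(\gamma))$) and at the other end to Seidel's direct count (giving $\gamma^\sharp$), while preserving transversality, monotonicity bounds, and orientations. Once such a family is in place, a standard Gromov compactness plus cobordism argument on the $1$-dimensional components yields the Hochschild cohomology-level identity. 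Compatibility with the condition of trivial local systems (needed for the clean identification of Lagrangian branes on both sides) explains the hypothesis in the statement.
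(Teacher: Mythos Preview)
Your proposal is in the right spirit but takes a different route from the argument the paper recalls (following Charette--Cornea), and it leaves implicit the one step that actually does the work.

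The paper does \emph{not} pass to the total space $E_\gamma$. Instead it (i) replaces the interior marked point in the $\CO^k$-disk by an interior \emph{puncture} asymptotic to a Hamiltonian orbit, feeding in the PSS image of $\S(\gamma)$; (ii) realises that PSS image as a cap composed with a continuation cylinder implementing the Seidel twist on Hamiltonian Floer cohomology; (iii) glues the cylinder to the disk; and (iv) performs the explicit substitution $u'(re^{2\pi it})=\gamma_t^{-1}\circ u(re^{2\pi it})$. This last step untwists the data at the interior puncture and transfers the $\gamma$-twist to the boundary strip-like ends, yielding exactly Seidel's formula for $\gamma^\sharp$: disks in $X$ with $\gamma_{t_i}$-pulled-back Floer data at the boundary punctures and an unconstrained interior marked point. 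That is the content of formula~(\ref{eq:CO}), on which the rest of the paper rests.

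Your fibration picture is morally equivalent (a section of $E_\gamma|_{D^2}$, once trivialised, is a map to $X$ with $\gamma$-twisted boundary data), but two points need tightening. First, Seidel in \cite[Section~(10c)]{SeiBook08} does \emph{not} define $\gamma^\sharp$ as a section count in $E_\gamma|_{D^2}$; he defines it intrinsically in $X$ via twisted perturbation data, so your identification of the glued object with $\gamma^\sharp$ requires exactly the trivialisation argument that you have not written. Second, and relatedly, your sketch never explains \emph{how} the $\gamma$-twist migrates from the closed input (the Seidel section) to the open boundary conditions; in the paper's argument this is step~(iv), the rotation substitution, and it is the crux. Your ``standard gluing collapses this bubbled configuration to a single pseudo-holomorphic disk in $E_\gamma|_{D^2}$'' hides precisely this mechanism. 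If you unpack it, you recover the paper's argument; as written, the proposal is an outline that has not yet isolated the key idea.
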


Let us now restrict to a single Lagrangian $L$ which is preserved by the Hamiltonian loop $\gamma$, and denote by $l\in H_1(L)$ the homology class of an orbit of $\gamma$ on $L$. 
Let $CC^*(L,L)$ denote the Hochschild cochain complex of the \ai algebra $CF^*(L,L)$, and let $HH^*(L,L)$ be its Hochschild cohomology. (The definition of Hochschild cohomology will be reminded later in this section.)
We will now need to recall the proof of Theorem~\ref{th:Charette_Cornea} for
several reasons: first, we wish to see how Theorem~\ref{th:Charette_Cornea} gets modified in the presence of a local system on $L$; second, we shall see the appearance of a sign ``hidden'' in $\gamma^\sharp$; third and most importantly,
we will recall the definition of the moDuli spaces computing $\gamma^\sharp$ in the process. Eventually, for later use we  need a form of Theorem~\ref{th:Charette_Cornea} expressed by formula (\ref{eq:CO}) below, which takes the local system and the sign into account.

Pick some Floer datum $\{H_s,J_s\}_{s\in [0,1]}$ and perturbation data defining an \ai structure on Floer's complex $CF^*(L,L)$ \cite{SeiBook08}.
Recall that the maps $$\CO^k(\S(\gamma))\co CF^*(L,L)^{\otimes k}\to CF^*(L,L)$$ 
count 0-dimensional moduli space of  disks satisfying a perturbed pseudo-holomorphic equation (with appropriately chosen perturbation data) with $k+1$ boundary punctures ($k$ inputs and one output) and one interior marked point. These disks satisfy the Lagrangian boundary condition $L$, and their interior marked point is constrained to  a  cycle  dual to $\S(\gamma)$, see Figure~\ref{fig:loop_spread}(a) (in this figure, we abbreviate the datum $\{H_s,J_s\}$ simply to $H$). A disk $u$ is counted with coefficient $\pm\rho(\bd u)$ where the sign $\pm$ comes from the orientation on the moduli space and $\rho(\bd u)\in\k^\x$ is the monodromy of the local system.
The collection of maps $\CO^*(\S(\gamma))\coloneqq\{\CO^k\}_{k\ge 0}$ is a Hochschild cochain in $CC^*(L,L)$, if all perturbation data are chosen consistently with gluing. 

\begin{figure}[h]
\centerline{ \includegraphics{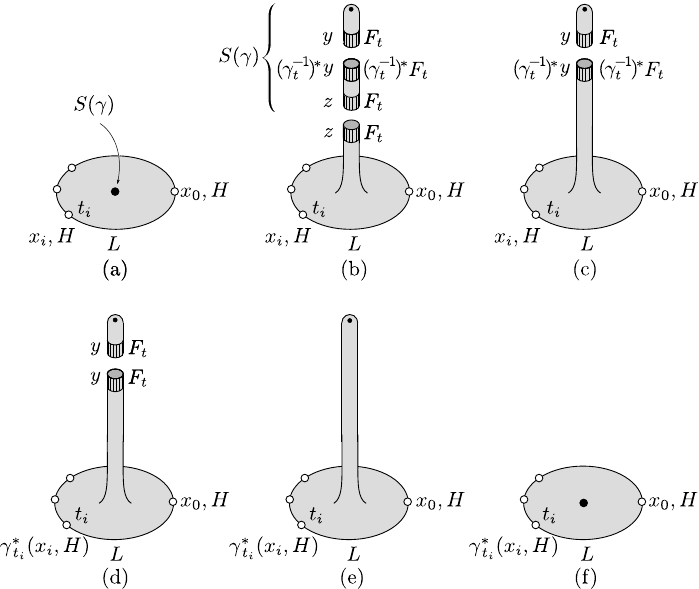} }
\caption{A computation of $\CO^*(\S(\gamma))$ by Charette and Cornea.}
\label{fig:loop_spread}
\end{figure}

The argument of Charette and Cornea
starts by passing to a more convenient definition of the closed-open map in which $\CO^k$ count holomorphic disks with $k+1$ boundary punctures and one interior puncture (instead of a marked point). We can view the neighbourhood of the interior puncture as a semi-infinite cylinder, then the pseudo-holomorphic equation  restricts on this semi-infinite cylinder to a Hamiltonian Floer equation with some Floer datum $\{F_t,J_t\}_{t\in S^1}$. We input the PSS image of $\S(\gamma)$ to the interior puncture, see Figure~\ref{fig:loop_spread}(b), given as a linear combination of some Hamiltonian orbits $z$ (in the figure, we abbreviate the datum $\{F_t,J_t\}$ simply to $F_t$). 

The PSS image of $\S(\gamma)$ counts configurations shown in the upper part of Figure~\ref{fig:loop_spread}(b), consisting of disks with one output puncture (say, asymptotic to an orbit $y$), and a cylinder counting continuation maps from $(\gamma_t^{-1})^*y$ seen as an orbit of Floer's complex with datum pulled back by the loop $\gamma_{t}^{-1}$ \cite[Lemmas 2.3 and~4.1]{Sei97}, to another orbit $z$ of the original Floer's complex with datum $\{F_t,J_t\}$.
Let us glue the $z$-orbits together, passing to  Figure~\ref{fig:loop_spread}(c), and then substitute each lower punctured pseudo-holomorphic disk $u$ in Figure~\ref{fig:loop_spread}(c) by $\tilde u$ defined as follows: 
\begin{equation}
\label{eq:u_substitution}
 \tilde u(re^{2\pi it})=\gamma_{t}^{-1}\circ u(re^{2\pi it}),
 \end{equation}
assuming that the interior puncture is located at $0\in\C$ and the output puncture at $1\in\C$. Let us look at the effect of this substitution. 

First, $[\bd u]=[\bd \tilde u]+l\in H_1(L)$ so the count of configurations in Figure~\ref{fig:loop_spread}(c) (before substitution) is equal to the count of configurations in Figure~\ref{fig:loop_spread}(d) (after substitution) multiplied by $\rho(l)$. 

Second, $\tilde u$ satisfies the same boundary condition $L$ because $\gamma_tL=L$, but the perturbation data defining the pseudo-holomorphic equation get pulled back accordingly. In particular, the Lagrangian Floer datum $\{H_s,J_s\}_{s\in [0,1]}$ and the asymptotic chord at a strip-like end corresponding to the boundary puncture at $t_i\in S^1$ get pulled back by $\gamma_{t_i}$.

Third, (\refeq{eq:u_substitution}) gives an abstract bijection $u\mapsto \tilde u$ between the respective zero-dimensional moduli spaces,
but we should discuss how this bijection behaves with respect to the signs attached to $u,\tilde u$ by the orientations on the moduli spaces. 
Assume for simplicity that the Hamiltonian perturbation is small enough, so that we can canonically deform $\bd u$ and $\bd \tilde u$ to loops inside $L$. Take the trivialisation of $TL|_{\bd u}$ defined by the spin structure on $L$, and push it forward by $\gamma$ to a trivialisation of $TL|_{\bd \tilde u}$. Obviously, \eqref{eq:u_substitution} preserves the signs computed using these trivialisations; we remark that such trivialisation of $TL|_{\bd \tilde u}$ is used in the general definition of $\gamma^\sharp$.
In our specific case,  $\tilde u$ is again a curve with boundary on $L$ (rather than with a moving Lagrangian boundary condition), and we wish to consider a different orientation scheme for $\tilde u$, namely the usual scheme for orienting moduli spaces of curves with boundary on $L$ using the given spin structure.
We shall be using this orientation scheme from now on, and we observe that it uses the trivialisation of $TL|_{\bd \tilde u}$ coming from the spin structure, which may be different from the pushforward trivialisation of $TL|_{\bd \tilde u}$ mentioned before. We denote the sign difference between the two orientations for $\tilde u$ by $(-1)^{\epsilon}=\pm 1$; this sign equals $+1$ is and only if the two trivialisations of $TL|_{\bd\tilde u}$ from above are homotopic. (Ultimately, we are going to use the fact that for our choice of orientation scheme, the moduli space of constant unconstrained disks is positively oriented; this may not be true for the $\gamma$ push-forward orientation scheme.) 

\begin{remark}
	\label{rem:signs_contract}
In general, the number $\epsilon$ does not necessarily equal $\epsilon(l)$ from the introduction. This equality holds when $\bd \tilde u$ is contractible, as easily seen from the definitions.
\end{remark}

Fourth, near the interior puncture $\tilde u$ satisfies the Hamiltonian Floer equation with original datum $\{F_t,J_t\}$ at the interior puncture, and is asymptotic orbit $y$. So we can glue the $y$-orbits, passing to Figure~\ref{fig:loop_spread}(e), and Figure~\ref{fig:loop_spread}(f) is another drawing of the same domain we got after gluing: namely, the disk with $k+1$ boundary punctures and one interior unconstrained marked point, fixed at $0\in \C$. Let us explain the presence of the marked point $0\in\C$: it is carried over from a marked point on the upper disk in Figure~\ref{fig:loop_spread}(b), where the interior marked point serves to stabilise the domain; such a marked point is present in the definition of the Seidel element. 

Summing up,
\begin{equation}
\label{eq:CO}
\CO^k(\S(\gamma))(x_1\otimes \ldots\otimes x_k)=(-1)^{\epsilon}\cdot \rho(l)\cdot \sum \sharp \M^\gamma(x_1,\ldots,x_k;x_0)\cdot x_0 
\end{equation}
where $\M^\gamma(x_1,\ldots,x_k;x_0)$ is the 0-dimensional moduli space of  disks shown in Figure~\ref{fig:loop_spread}(f) which satisfy the inhomogeneous pseudo-holomorphic equation defined by domain- and modulus-dependent perturbation data in the sense of \cite{SeiBook08} such that:
\begin{itemize}
 \item the disks carry the unconstrained interior marked point fixed at $t=0$, the output boundary puncture fixed at $t_0=1$, and $k$ free input boundary punctures at $t_i\in S^1$, $i=1,\ldots,k$;
\item  on a strip-like end corresponding to a boundary puncture  $t_i\in S^1$,  perturbation data restrict to the Floer datum which is the  $\gamma_{t_i}$-pullback of the original Floer datum $\{H_s,J_s\}_{s\in [0,1]}$,
and the asymptotic chord for this strip must be the $\gamma_{t_i}$-pullback of the asymptotic chord $x_i$ of the original Floer datum;
\item
the data must be consistent with gluing strip-like ends
 at $t_i\in S^1$ to strip-like ends of punctured pseudo-holomorphic disks carrying the $\gamma_{t_i}$-pullbacks of the  perturbation data defining the \ai structure on $CF^*(L,L)$;
 back by $\gamma_{t_i}$. 

 \item we use the standard orientation scheme for curves with boundary on $L$ to orient the $\M^\gamma$s, and the sign $(-1)^\epsilon$ was explained above.
\end{itemize}

The counts $\sharp \M^\gamma$ are signed and weighted by $\rho$; the third condition guarantees that $\CO^*(\S(\gamma))$ is a Hochschild cocycle.
Formula (\ref{eq:CO}) 
coincides with  the formula from \cite[Section (10c)]{SeiBook08} defining the natural transformation $[\gamma^\sharp]$ up to $(-1)^{\epsilon}\rho(l)$, and we have clarified this difference.

\begin{remark}
 The fixed interior marked point at $t=0$ and the fixed boundary marked point at $t_0=1$ make sure our disks have no automorphisms, so the values $t_i\in S^1$ of the other boundary punctures are uniquely defined.
\end{remark}


Before proceeding, note that we are already able to compute $\CO^0(\S(\gamma))$.

\begin{corollary}
\label{cor:compute_CO_0}
If $\{\gamma_t\}_{t\in S^1}$ is a Hamiltonian loop such that $\gamma_t(L)=L$, then on the chain level, $$\CO^0(\S(\gamma))=(-1)^{\epsilon(l)}\cdot \rho(l)\cdot 1_L\in HF^0(L,L).$$
Here $1_L$ is a chain-level representative of the cohomology unit, see \eqref{eq:1_chain} below.
\end{corollary}
\begin{proof}
When $k=0$, the moduli space in formula~(\ref{eq:CO}) is exactly the moduli space defining the cohomological unit in $CF^*(L,L)$, see e.g.~\cite[Section~2.4]{She13}.  The equality $\epsilon=\epsilon(l)$ holds by Remark~\ref{rem:signs_contract}: for a small Hamiltonian, the curves computing the unit are close to being constant and therefore have contractible boundary.
\end{proof}

\begin{proof}[Proof of Theorem~\ref{th:CO_invt_lag}(a)]
This is the homology-level version of Corollary~\ref{cor:compute_CO_0}. 
\end{proof}

\subsection{The PSS maps in degree one}
Our goal will be to compute a ``topological piece'' of $\CO^1(S(\gamma))$. This subsection introduces some background  required for  the computation: in particular, we recall that there is a canonical map $\Phi\co H^1(L)\to HF^*(L,L)$ which was used in the statement of Theorem~\ref{th:CO_invt_lag}. This is  the Lagrangian PSS map of Albers \cite{Alb08},
and the fact it is canonical was discussed, for instance, by Biran and Cornea \cite[Proposition 4.5.1({\it ii})]{BC09A} in the context of Lagrangian quantum cohomology.

First, recall that once the Floer datum is fixed, the complex $CF^*(L,L)$ acquires the Morse $\Z$-grading. This grading is not preserved by the Floer differential or the \ai structure maps, but is still very useful.
Assume that the Hamiltonian perturbation, as part of the Floer datum, is chosen to have a unique minimum $x_0$ on $L$, which means that $CF^0(L,L)$ is one-dimensional and generated by $x_0$. We denote by 
\begin{equation}
\label{eq:1_chain}
 1_L\in CF^0(L,L)
 \end{equation}
the chain-level cohomological unit defined in \cite[Section~2.4]{She13}, which is proportional to $x_0$. Now pick a metric  and a Morse-Smale function $f$ on $L$ with a single minimum; together they define the Morse complex which we denote by $C^*(L)$.  
Consider the ``Maslov index 0'' versions of the PSS maps, which are linear (but \emph{not} chain) maps:
\begin{equation}
\label{eq:PSS_chain}
\Psi\co CF^*(L,L)\to C^*(L),\quad \Phi\co C^*(L)\to CF^*(L,L). 
\end{equation}
These maps are defined as in the paper of Albers \cite{Alb08}, with the difference that $\Phi,\Psi$ count configurations with Maslov index 0 disks only.
For example, the map $\Psi$ counts configurations consisting of a Maslov index 0 pseudo-holomorphic disk with boundary on $L$ and one input boundary puncture, followed by a semi-infinite gradient trajectory of $f$ which outputs an element of $C^*(L)$. Similarly, $\Phi$ counts  configurations in which a semi-infinite gradient trajectory is followed by a Maslov index 0 disk with an output boundary puncture. The maps $\Psi,\Phi$ preserve $\Z$-gradings on the two complexes.

Let $d_0\co CF^*(L,L)\to CF^{*+1}(L,L)$ be the ``Morse'' part of the Floer differential counting the contribution of Maslov 0 disks, see Oh \cite{Oh96A}. Denote by $d_{\it{Morse}}\co C^*(L)\to C^{*+1}(L)$ the usual Morse differential. The lemma below is a version of \cite[Theorem~4.11]{Alb08}.

\begin{lemma}
\label{lem:PSS_general}
 $\Phi,\Psi$ are chain maps with respect to $d_0$ and $d_{\it{Morse}}$, and are cohomology inverses of each other.\qed
\end{lemma}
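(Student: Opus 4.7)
The plan is to run Albers's proof that the unrestricted PSS maps are mutually inverse quasi-isomorphisms inside the Maslov $0$ filtered piece. The key input is that on a monotone Lagrangian of minimal Maslov number at least $2$, the Maslov index is non-negative on every pseudo-holomorphic disk class and strictly positive on non-constant disks, so by additivity of Maslov under gluing, any Gromov--Floer degeneration of a Maslov $0$ PSS configuration can only split into pieces each of which again has Maslov index $0$. In particular, no non-trivial disk bubbles can form; sphere bubbles are codimension $\ge 2$ and can be excluded from one-dimensional moduli spaces as usual.

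For the chain-map equation $d_0\,\Phi = \Phi\,d_{\mathit{Morse}}$, I would analyse the one-dimensional component of the moduli space of Maslov $0$ PSS configurations: a semi-infinite gradient trajectory of $f$ starting at a critical point, attached at a boundary marked point to a Maslov $0$ perturbed pseudo-holomorphic half-plane carrying one output puncture asymptotic to a Hamiltonian chord. Its Gromov--Floer compactification has three potential types of codimension $1$ boundary: Morse-trajectory breakings at the input, contributing $\Phi\,d_{\mathit{Morse}}$; Floer-strip breakings at the output, contributing $d_0\,\Phi$ (only Maslov $0$ strips can appear, by Maslov additivity); and bubble degenerations, excluded by the observation above. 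The resulting boundary identity yields the chain-map equation, and the chain-map property of $\Psi$ follows from the symmetric configuration with the roles of the two ends swapped.

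To show $\Phi$ and $\Psi$ are mutual cohomology inverses, I would exhibit a chain homotopy between $\Psi\,\Phi$ and $\mathrm{id}_{C^*(L)}$ by introducing a gluing parameter $R\in[0,\infty]$ and counting the one-dimensional parametrised moduli space of configurations consisting of a $\Phi$-half glued to a $\Psi$-half along a length-$R$ gradient segment. At $R=\infty$ the family breaks into $\Psi\circ\Phi$; at $R=0$ the two PSS half-planes collapse into a single constant-disk configuration with no Hamiltonian-chord end, which standard arguments identify with the identity endomorphism of $C^*(L)$. Counting boundary components of the one-dimensional parametrised space produces the desired chain homotopy; once more, monotonicity excludes bubble contributions. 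The analogous statement $\Phi\,\Psi\sim\mathrm{id}_{CF^*(L,L)}$ is proved symmetrically.

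The main technical obstacle is arranging perturbation data consistently so that (i) transversality holds for all of the above moduli spaces and their degenerations while remaining compatible with the Floer datum defining $CF^*(L,L)$ and the Morse datum defining $C^*(L)$, and (ii) the restriction to Maslov $0$ disks is preserved coherently through all gluings and compactifications. Once the bubble strata are ruled out by monotonicity, however, the combinatorics is identical to that of the classical PSS argument, so the remaining analysis reduces to standard techniques as in Albers~\cite{Alb08} and Biran--Cornea~\cite{BC09A}.
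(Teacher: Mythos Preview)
The paper does not actually supply a proof: the lemma is stated with a \qed and attributed as a version of \cite[Theorem~4.11]{Alb08}. Your sketch is precisely the standard PSS argument restricted to the Maslov~$0$ filtered piece, which is what the citation refers to, so the approach is the expected one.

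There is one small slip worth correcting. For $\Psi\circ\Phi\co C^*(L)\to C^*(L)$, the intermediate object is a Hamiltonian chord, so the homotopy is built by gluing the $\Phi$-disk and the $\Psi$-disk along a Floer strip (or, equivalently, by considering a single disk with two boundary marked points and no punctures, which for Maslov~$0$ is constant and yields the Morse continuation map). The length-$R$ \emph{gradient segment} homotopy you describe is the one for the other composition, $\Phi\circ\Psi\co CF^*(L,L)\to CF^*(L,L)$, where the intermediate object is a Morse critical point. With that correction your outline matches the standard proof.
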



\begin{lemma}
	\label{lem:phi_cocycle}
Suppose $HF^*(L,L)\neq 0$.
If $y\in C^1(L)$ is a Morse cocycle (resp.~coboundary) then $\Phi(y)$ is a Floer cocycle (resp.~coboundary).
\end{lemma}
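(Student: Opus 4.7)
\emph{Plan of proof.} I would decompose the Floer differential $d$ on $CF^*(L,L)$ according to the Maslov index of the contributing disks, writing $d = d_0 + d_+$ where $d_+$ collects all contributions from disks of Maslov index $\mu\ge N_L\ge 2$. With respect to the Morse $\Z$-grading, the summand of $d_+$ counting Maslov-$\mu$ disks shifts degree by $1-\mu$, so $d_0$ raises degree by one while every component of $d_+$ lowers it by at least one.

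For a Morse cocycle $y\in C^1(L)$, the map $\Phi$ preserves Morse degree, so $\Phi(y)\in CF^1(L,L)$, and Lemma~\ref{lem:PSS_general} yields $d_0\Phi(y)=\Phi(d_{\mathit{Morse}}y)=0$. Breaking up $d_+\Phi(y)$ by Maslov index, the Maslov-$2$ summand lies in $CF^0(L,L)$ and all higher-Maslov summands land in $CF^{<0}(L,L)$, which vanishes thanks to the choice of Hamiltonian perturbation with a unique minimum $x_0$ on $L$. Since $CF^0(L,L)=\k\cdot 1_L$, I obtain $d\Phi(y)=c\cdot 1_L$ for some scalar $c\in\k$.

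The decisive step, and the only place where the hypothesis $HF^*(L,L)\ne 0$ is used, is to rule out $c\ne 0$. Because $[1_L]$ is the multiplicative unit in $HF^*(L,L)$, non-vanishing of $HF^*(L,L)$ forces $[1_L]\ne 0$; thus $1_L$ cannot be a Floer coboundary, and $c\ne 0$ would give the contradiction $1_L=c^{-1}d\Phi(y)$. Hence $c=0$ and $\Phi(y)$ is a Floer cocycle.

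The coboundary case follows similarly. If $y=d_{\mathit{Morse}}x$ with $x\in C^0(L)$, then $\Phi(x)\in CF^0(L,L)$, and the same grading analysis shows $d_+\Phi(x)\in CF^{<0}(L,L)=0$. Consequently $d\Phi(x)=d_0\Phi(x)=\Phi(d_{\mathit{Morse}}x)=\Phi(y)$ by Lemma~\ref{lem:PSS_general}, exhibiting $\Phi(y)$ as a Floer coboundary. The main obstacle is just the careful Morse-degree bookkeeping; the unique-minimum assumptions on both $f$ and $H$ collapse all but the single problematic component landing in $\k\cdot 1_L$, which the hypothesis on $HF^*(L,L)$ then kills.
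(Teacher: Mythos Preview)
Your proof is correct and follows essentially the same route as the paper. The paper's one-line argument invokes Oh's decomposition of the Floer differential to assert that the kernel and image of $d_0$ and of the full differential $d$ coincide on $CF^1(L,L)$ when $HF^*(L,L)\neq 0$; your write-up is exactly a detailed unpacking of that assertion, using the same Morse-degree bookkeeping on $d=d_0+d_+$ and the same appeal to the fact that $1_L$ cannot be $d$-exact.
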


\begin{proof}
 This follows from the fact that the image and the kernel of $d_0$ and the full Floer differential $d$ coincide on $CF^1(L,L)$ if $HF^*(L,L)\neq 0$, by Oh's decomposition of the Floer differential \cite{Oh96A}.
\end{proof}
Consequently, if $HF^*(L,L)\neq 0$, we get a map
$$
\Phi\co H^1(L)\to HF^*(L,L).
$$
For $\Psi$, we have a weaker lemma using \cite{Oh96A} (this lemma is not true for coboundaries instead of cocycles).
\begin{lemma}
	\label{lem:psi_cocycle}
 If $y\in CF^*(L,L)$ is a Floer cocycle, then $\Psi(y)\in C^*(L)$ is a Morse cocycle.\qed
\end{lemma}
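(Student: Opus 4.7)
My plan is to invoke Oh's decomposition of the Floer differential \cite{Oh96A} and to exploit the Morse $\Z$-grading on $CF^*(L,L)$. Oh writes $d = d_0 + d_2 + d_4 + \cdots$, where $d_{2k}$ counts rigid perturbed pseudo-holomorphic strips whose underlying disk has Maslov index $2k$; the sum is finite because $L$ is monotone with minimal Maslov number at least $2$. A standard index comparison shows that $d_{2k}$ shifts the Morse grading by $1-2k$, so the operators $d_0, d_2, d_4, \ldots$ land in pairwise distinct Morse degrees.

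I would then argue Morse-degree by Morse-degree; since $\Psi$ preserves Morse degree, the equation $d_{\mathit{Morse}}\Psi(y) = 0$ can be checked on each pure Morse-homogeneous piece of $y$ separately, so it suffices to treat the case $y \in CF^p(L,L)$ for some fixed $p$. For such $y$ the identity $dy = 0$ reads
\[
d_0 y \,+\, d_2 y \,+\, d_4 y \,+\, \cdots \;=\; 0,
\]
with the successive summands lying in the Morse degrees $p+1$, $p-1$, $p-3$, $\ldots$ respectively. These Morse degrees being pairwise distinct, each summand must vanish separately; in particular $d_0 y = 0$. An appeal to Lemma~\ref{lem:PSS_general}, asserting that $\Psi$ is a chain map from $(CF^*(L,L), d_0)$ to $(C^*(L), d_{\mathit{Morse}})$, then delivers
\[
d_{\mathit{Morse}}\Psi(y) \;=\; \Psi(d_0 y) \;=\; 0,
\]
completing the argument.

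I do not foresee a serious obstacle: the Morse-grading bookkeeping separates Oh's contributions automatically, and everything else is an application of Lemma~\ref{lem:PSS_general}. The parenthetical warning in the lemma that the statement fails for coboundaries fits naturally into this picture. Indeed, if $y = dz$ with $z \in CF^q(L,L)$ homogeneous, then $y$ itself spreads across the Morse degrees $q+1$, $q-1$, $q-3$, $\ldots$ via the distinct contributions $d_0 z, d_2 z, d_4 z, \ldots$, and while $\Psi(d_0 z) = d_{\mathit{Morse}}\Psi(z)$ is automatically Morse-exact, the higher pieces $\Psi(d_{2k}z)$ for $k \geq 1$ carry no such interpretation and so need not be Morse coboundaries.
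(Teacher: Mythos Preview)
Your approach via Oh's decomposition $d = d_0 + d_2 + d_4 + \cdots$ together with Lemma~\ref{lem:PSS_general} is exactly what the paper has in mind (the paper simply cites \cite{Oh96A} and marks the lemma with \qed). For a Morse-homogeneous Floer cocycle $y \in CF^p(L,L)$ your argument is clean and correct: the summands $d_0 y, d_2 y, d_4 y, \ldots$ land in pairwise distinct Morse degrees, so $dy=0$ forces $d_0 y = 0$, and then $d_{\mathit{Morse}}\Psi(y)=\Psi(d_0 y)=0$ by Lemma~\ref{lem:PSS_general}.

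There is, however, a gap in your reduction to the homogeneous case. You correctly note that the \emph{conclusion} $d_{\mathit{Morse}}\Psi(y)=0$ splits Morse-degree by Morse-degree, but the \emph{hypothesis} $dy=0$ does not: writing $y=\sum_p y_p$, the degree-$(p+1)$ component of $dy=0$ reads
\[
d_0 y_p \;+\; d_2 y_{p+2} \;+\; d_4 y_{p+4} \;+\;\cdots \;=\;0,
\]
which does not yield $d_0 y_p = 0$ on its own. Hence one cannot say ``it suffices to treat $y\in CF^p(L,L)$'' while retaining the Floer-cocycle assumption on the individual pieces $y_p$; your subsequent separation-by-Morse-degree argument then does not apply to them. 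This is harmless for the paper's purposes, since the lemma is only ever invoked for homogeneous $y\in CF^1(L,L)$ (in Proposition~\ref{prop:compute_CO_1} and the proof of Theorem~\ref{th:CO_invt_lag}(b)), and for such $y$ your proof is complete as written.
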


By Lemmas~\ref{lem:phi_cocycle},~\ref{lem:psi_cocycle}, given $HF^*(L,L)\neq 0$, we have the following induced maps which we denote by the same symbols $\Psi,\Phi$, abusing notation:
\begin{equation}
\label{eq:PSS_homol}
\Psi\co CF^*(L,L)\to H^*(L),\quad \Phi\co H^*(L)\to HF^*(L,L). 
\end{equation}
In particular, Theorem~\ref{th:CO_invt_lag} in Section~\ref{sec:intro} refers to this cohomological version of the map $\Phi$.
We remind that $\Psi$ does not necessarily descend to a map from $HF^*(L,L)$.

\subsection{Computing the topological part of $\CO^1(\S(\gamma))$}
We continue to use the above conventions and definitions, namely we use the $\Z$-grading on $CF^*(L,L)$,  the maps $\Phi,\Psi$, and the choice of a Hamiltonian perturbation on $L$ with a unique minimum $x_0$. From now on, we assume $HF^*(L,L)\neq 0$.
Recall that $\CO^*(\S(\gamma))$ is determined via formula (\ref{eq:CO}) by the moduli spaces $\M^\gamma(x_1,\ldots,x_k;x_0)$. The connected components of $\M^\gamma(x_1,\ldots,x_k;x_0)$ corresponding to disks of Maslov index $\mu$
have dimension $$|x_0|+k+\mu-\sum_{i=1}^k |x_i|$$ where $|x_i|$ are the $\Z$-gradings of the $x_i\in CF^*(L,L)$. Consequently,
$$\CO^k(\S(\gamma))\co  CF^*(L,L)^{\otimes k}\to CF^*(L,L)$$ is a sum of maps of degrees 
$$-k-mN_L,\quad m\ge 0,$$
where $N_L$ is the minimal Maslov number of $L$. In particular, the  restriction of $\CO^1(\S(\gamma))$ to ${CF^1(L,L)}$
is of pure degree $-1$, that is, its image lands in $CF^0(L,L)$:
$$\CO^1(\S(\gamma))|_{CF^1(L,L)}\co CF^1(L,L)\to CF^0(L,L).$$
Moreover, this map is determined by the moduli space consisting of Maslov index~0 disks only, and can be computed in purely topological terms. This is the main technical computation which we now perform; recall that $l\in H_1(L)$ is the homology class of an orbit of $\gamma$ in $L$, and the sign $(-1)^{\epsilon(l)}$ was defined in Section~\ref{sec:intro}.

\begin{proposition}
\label{prop:compute_CO_1}
 Suppose $HF^*(L,L)\neq 0$. If $x\in CF^1(L,L)$ is a Floer cocycle, then on the chain level,
 $$
\CO^1(\S(\gamma))(x)=(-1)^{\epsilon(l)}\cdot \rho(l)\cdot  \langle\Psi(x),l\rangle\cdot 1_L.
$$
Here $\Psi(x)\in H^1(L)$ is from~(\ref{eq:PSS_homol}), $\langle-,-\rangle$ denotes the pairing $H^1(L)\otimes H_1(L)\to \k$, we consider $\CO^1$ on the chain level, and $1_L$ is a chain-level cohomology unit as in~\eqref{eq:1_chain}.
\end{proposition}

\begin{proof}
All disks with boundary on $L$ we consider in this proof are assumed to have Maslov index 0.
We identify the domains of all disks that appear in the proof with the unit disk in $\C$, and their boundaries are identified with the unit circle $S^1\subset \C$.
In the subsequent figures, punctured marked points will be drawn by circles filled white, and unpunctured marked points by circles filled black.
According to formula~(\ref{eq:CO}), for a generator $x\in CF^1(L,L)$ we have 
$$\CO^1(\S(\gamma))(x)=(-1)^\epsilon\cdot \rho(l)\cdot \sharp \M^\gamma(x;x_0),$$
where  $\M^\gamma(x;x_0)$ consists of (perturbed pseudo-holomorphic) Maslov index 0 disks whose domains are shown in Figure~\ref{fig:Two_Gluings}(a).

\subsubsection*{Step 1. Perturbation data producing bubbles with unpunctured points}
Recall that the domains appearing in the moduli space $\M^\gamma(x;x_0)$
are disks with the interior marked point $0$ and boundary punctures $1,t$, where $t\in S^1\setminus\{1\}$.
For further use, we will choose perturbation data defining $\M^\gamma(x;x_0)$ whose bubbling behaviour as $t\to 1$ differs from the standard one. Usually, the perturbation data would be chosen so as to be compatible, as $t\to 1$, with the gluing shown in Figure~\ref{fig:Two_Gluings}(a)$\to$(b), where the bubble meets the principal disk 
along a puncture, meaning that near this puncture it satisfies a  Floer equation and shares an asymptotic Hamiltonian chord with the corresponding puncture of the principal disk.
On the other hand, we will use perturbation data consistent with gluing shown in Figure~\ref{fig:Two_Gluings}(a)$\to$(c), where the bubble is attached to the principal disk by an unpunctured marked point. Near the unpunctured marked point, the disks satisfy a holomorphic equation with no Hamiltonian term.

\begin{figure}[h]
 \includegraphics{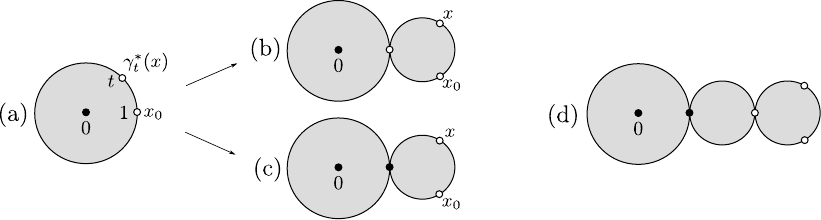}
\caption{Two types of gluings for $\M^\gamma(x;x_0)$, and a way to interpolate between the glued perturbation data.}
\label{fig:Two_Gluings}
\end{figure}

Let us explain how to define both types of data more explicitly. The domain in Figure~\ref{fig:Two_Gluings}(a), with free parameter $t$ close to $1$, is bi-holomorphic to the domain shown in Figure~\ref{fig:Disk_stretch} whose boundary marked points are fixed at $1$ and some $t_0\in S^1$, upon which a stretching procedure along the strip labelled (b) is performed. This stretching procedure changes the complex structure on the disk by identifying the strip with $[0,1]\times [0,1]$, removing it, and gluing back the longer strip $[0,1]\times [0,r]$. The parameter $r\in [1,+\infty)$ is free and replaces the free parameter $t$, so that tending $r\to+\infty$ replaces  the collision of two marked points $t\to 1$.

\begin{figure}[h]
 \includegraphics{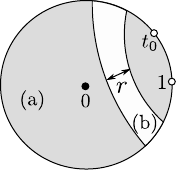}
\caption{Collision of two boundary marked points seen as stretching the strip (b) with parameter $r\to+\infty$.}
\label{fig:Disk_stretch}
\end{figure}

In order to get perturbation data which are consistent with the usual bubbling, shown in Figure~\ref{fig:Two_Gluings}(a)$\to$(b), one requires  the perturbed pseudo-holomorphic equation to coincide, on the strip $[0,1]\times [0,r]$, with the usual Floer equation defining the Floer differential, which uses a Hamiltonian perturbation translation-invariant in the direction of $[0,r]$. In order to get perturbation data producing the bubbling pattern Figure~\ref{fig:Two_Gluings}(a)$\to$(c), we simply put an unperturbed pseudo-holomorphic equation on the strip $[0,1]\times [0,r]$, without using a Hamiltonian perturbation at all.

Both ways of defining perturbation data are subject to appropriate gluing and compactness theorems, which precisely say that as we tend $r\to\infty$, the solutions bubble in one of the two corresponding ways shown in Figure~\ref{fig:Two_Gluings}. 
The standard choice is used, for example, to prove that  $[\gamma^\sharp]$  (obtained from the counts of various $\M^\gamma$) is a Hochschild cocycle in the Fukaya \ai algebra of $L$ defined using Seidel's setup with Hamiltonian perturbations. The other choice will be more convenient for our computations. 
Note that the two different types of perturbation data give the same count $\sharp \M^\gamma(x;x_0)$: this is proved by interpolating between them using the two-parametric space of perturbation data obtained from  gluing together the disks in Figure~\ref{fig:Two_Gluings}(d) with different length parameters. Recall that all disks in $\M^\gamma(x;x_0)$ have Maslov index 0 as the Morse index $|x|=1$, therefore no unnecessary bubbling occurs. (Since we do not want to compute the moduli spaces $\M^\gamma$ other than the $\M^\gamma(x;x_0)$ for $|x|=1$, we do not have to worry about extending our unusual type of perturbation data to the other moduli spaces.)

In addition, we will assume that the Hamiltonian perturbation vanishes over the principal disk in Figure~\ref{fig:Two_Gluings}(c), making this disk $J$-holomorphic and hence constant, because the disk has Maslov index 0. Such configurations can be made consistent with gluing: for this, one just needs to make the Hamiltonian perturbation vanish over  subdomain (a) in Figure~\ref{fig:Disk_stretch}, for all $t$ close to $1$.
Note that regularity can be achieved by perturbing the pseudo-holomorphic equation over the subdomain to the right of the strip (b) in Figure~\ref{fig:Disk_stretch}.

\subsubsection*{Step 2. A one-dimensional cobordism from $\M^\gamma(x;x_0)$}
In what follows, we will use (and assume familiarity with) the theory of holomorphic pearly trees developed by Sheridan in his Morse-Bott definition of the Fukaya category \cite{She13}; see also the earlier work of Cornea and Lalonde \cite{CL06}. Sheridan performs the analysis based on extending Seidel's setup of Fukaya categories from \cite{SeiBook08}. Although \cite{She13} considers exact Lagrangians instead of monotone ones, all the analysis works equally well in the non-exact case if we only consider  disks of  Maslov index 0, because here unpunctured disk bubbles cannot occur just like in the exact case. Techniques for dealing with holomorphic pearly trees  (or ``clusters'') with disks of arbitrary Maslov index have appeared in~\cite{CL06,Cha12}, but we will not actually need to appeal to them.

\begin{figure}[h]
\centerline{ \includegraphics{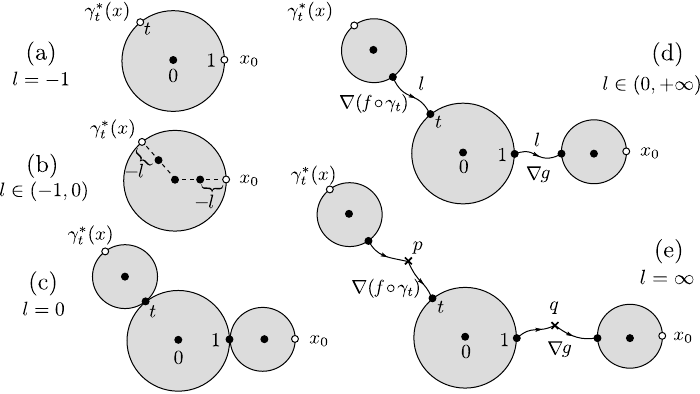}}
\caption{The domains for $s\in (0,2\pi)$, $l\in [-1,\infty]$, where $t=e^{is}$.}
\label{fig:loop_morse}
\end{figure}

We will now define a family of domains depending on two parameters $s\in [0,2\pi]$, $l\in [-1,+\infty]$.
When $s\notin \{0,2\pi\}$, the domains are shown in Figure~\ref{fig:loop_morse}(a)--(e), where we denoted $t=e^{is}$; we discuss the case $s\in \{0,2\pi\}$ later.
 When $l=-1$ the domain is the disk from the definition  of $\M^\gamma(x;x_0)$. When $l\in (-1,0)$, the domain is the same disk (called principal) with two additional interior marked points whose position is determined by the parameter $l$: the first point lies on the line segment $[0,t]$, the second one lies on the line segment $[0,1]$, and both points have distance $1+l$ from $0$. When $l=0$, the domain consists of the principal disk with marked points $0,1,t$, and two bubble disks attached to the principal disk at points $1$ and $t$. The first bubble disk has marked points $0,1$ and a boundary puncture at $-1$, the second one has marked points at $0,-1$ and a boundary puncture at $1$. When $0<l<\infty$, the domain contains the same three disks, now disjoint from each other, plus two line segments of length $l$ connecting the bubble disks to the principal one along the boundary marked points at which the disks used to be attached to each other. (So far, the length is just a formal parameter associated with the domain, but soon it will become the length of the flowline corresponding to the segment.) When $l=\infty$, we replace each line segment by two rays $[0,+\infty)\sqcup (-\infty,0]$. 

When $s=0$ or $s=2\pi$, the domains obtain extra bubbles as those discussed above in the definition of $\M^\gamma(x;x_0)$, which correspond to the parameter $t=e^{is}\in S^1$ approaching $1\in S^1$ from the two sides. These domains are shown in Figure~\ref{fig:Circle_Bubbles}: as $l$ goes from $-1$ to $0$, the two interior marked points move along the punctured paths.  Observe that these points are crossing the node between the two disks at some intermediate value of $l$; this does not cause any difficulty with the definitions because these marked points are only used to represent varying perturbation data consistent with the types of bubbling  we prescribe in the figures. We will soon mention what these varying data are in terms of stretching certain strips inside a fixed disk. When $l>0$, the length of the paths equals $l$. When $l=\infty$, one introduces broken lines $[0,+\infty)\sqcup (-\infty,0]$ as above.

\begin{figure}[h]
 \includegraphics{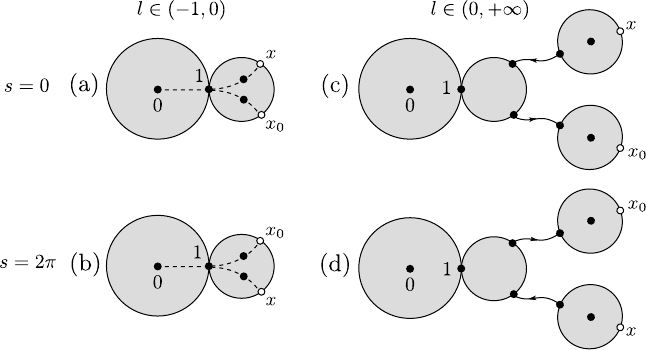}
\caption{The domains for $s\in \{0,2\pi\}$, $l\in [-1,\infty]$.}
\label{fig:Circle_Bubbles}
\end{figure}

Having specified the domains, we  briefly  explain how to equip the disks with suitable perturbed pseudo-holomorphic equations, and line segments with suitable gradient equations to get a moduli space of solutions.
When $l=-1$, we choose the equations defining $\M^\gamma(x;x_0)$ as discussed above in Step~1, in particular which is consistent with bubbling at the unpunctured point as $s\to 0$ or $s\to 2\pi$. When $-1<l<0$, we choose the equations with the same properties as for $\M^\gamma(x;x_0)$, which are additionally constistent with bubbling at unpunctured points as $l\to 0$. When $l\ge 0$, we choose the equation on the disk with an input puncture to be the $\gamma_t$-pullback of the one appearing in the definition of the PSS map $\Psi$, and the equation on the disk with an output puncture to be exactly the equation from the PSS map $\Phi$. 
Finally we fix two generic Morse-Smale functions $f,g\co L\to \R$. On the line segments and rays, the equation is the gradient equation for $g$ or the $\gamma_t$-pullback of the gradient equation for $f$, as shown in Figure~\ref{fig:loop_morse}. 

In general, we can arrange the equations on all disks to have a Hamiltonian perturbation. For the disks with punctures, we must do so anyway; however, for the disks without punctures (with unpunctured marked points only), we can choose the equation to be $J$-holomorphic 
without a Hamiltonian term, provided that we check that our moduli space can be made regular with this restricted choice. In order to carry out the computations below, we choose the zero Hamiltonian perturbation on all disks without boundary punctures, namely:
\begin{itemize}
	\item the central disk in Figures~\ref{fig:loop_morse}(c),~(d),~(e), 
	\item the left disks in Figures~\ref{fig:Circle_Bubbles}(a)--(d) and the central disks in Figures~\ref{fig:Circle_Bubbles}(c),~(d).
\end{itemize}

We will now specify one more property of the equations that we choose.
When $s=0$ and $l<0$, we require the perturbation data on the twice-punctured disk in Figure~\ref{fig:Circle_Bubbles}(a) to be obtained by $\pi$-rotation from the perturbation data on the similar disk for $s=2\pi$ (and the same parameter $l$), if we identify the two punctures with points $1,-1$ of the unit disk. When $s=0$ and $l\ge 0$, we make a similar symmetric choice.

Finally, we specify the asymptotic conditions at the punctures. If $x$ is a generator of $CF^1(L,L)$, we specify that the input puncture in Figures~\ref{fig:loop_morse} must be asymptotic to the $\gamma_t$-pullback of $x$ (as usual, if $x$ is a linear combination of generators, we take the disjoint union of the relevant moduli spaces). The output puncture must be asymptotic to the unique generator $x_0\in CF^0(L,L)$. When $l=\infty$, the first pair of rays in Figure~\ref{fig:loop_morse}(e) must be asymptotic to a point $p$ such that $\gamma_t^{-1}(p)\in C^1(L)$ (that is, $p$ is an index 1 critical point of $f\circ \gamma_t$) and the second pair of rays must be asymptotic to $q\in C^0(L)$; we assume $q$ is the unique minimum of $g$. The interior marked points on the disks are unconstrained.

Above, we have specified a 2-dimensional space of domains and the equations over them. This gives us a moduli space of solutions (``pearly trees'') which is 1-dimensional, by our choice of indices. We remind the reader that a formal definition of this moduli space falls into the setup of moduli spaces of pearly trajectories given by Sheridan \cite{She11}. For our purposes, its description given above will suffice.

The boundary of our 1-dimensional moduli space consists of:
\begin{itemize}
 \item solutions whose domains have parameter $l=-1$ or $l=\infty$,
\item solutions whose domains have parameter $s=0$ or $s=2\pi$.
\end{itemize}
We claim that solutions of the second type cancel pairwise. Indeed, recall that the disks without boundary punctures in Figures~\ref{fig:Circle_Bubbles}(a)--(d) are constant, and the perturbation data on the punctured disks for $s=0,2\pi$ are chosen in a way to provide the same solutions, after a $\pi$-rotation on each disk. Let us describe more explicitly what happens when $l<0$, as the case when $l\ge 0$ is clear enough from Figures~\ref{fig:Circle_Bubbles}(c),~(d). We can represent the domains shown in Figure~\ref{fig:loop_morse}(a), with free $l<0$ and free small $s>0$, where $t=e^{is}$, by a disk with fixed boundary punctures, stretched with length parameters $-1/l$ and $1/s$ along the three strips shown in Figure~\ref{fig:two_stretches_bubble}(a). The stretching procedure was described earlier, and our choice of perturbation data says that the stretched strips, and the sub-domain to the left of the $1/s$-strip, carry an unperturbed pseudo-holomorphic equation. So for $s=0$ we get the disks shown in Figure~\ref{fig:two_stretches_bubble}(b), with the unpunctured boundary marked point attached to a constant disk, which means this boundary marked point unconstrained. (As usual, the domain is considered up to complex automorphisms, so the unconstrained point does not prevent us from having rigid solutions.) This way, Figures~\ref{fig:Circle_Bubbles}(a)~and~\ref{fig:two_stretches_bubble}(b) are drawings of the same configuration, for any $l<0$. If we rotate the disk in Figure~\ref{fig:two_stretches_bubble}(b) by $\pi$, we get precisely the disk with perturbation data  we would have got for $s=2\pi$, except that the boundary marked point (with the attached constant disk) is on the different side of the boundary.

\begin{figure}[h]
	\centerline{\includegraphics{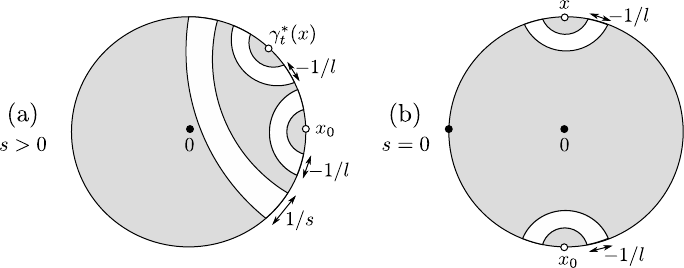}}
	\caption{Left: the domains for $l<0$, $s>0$ seen as a fixed disk with three stretched strips. Right: the same domains for $s=0$ when the principal disk is constant.}
	\label{fig:two_stretches_bubble}
\end{figure}

We have shown that the solutions for $s=0$ and $s=2\pi$ are in a natural bijection; we claim that this bijection reverses the signs associated to those solutions as parts of the moduli space. This is ultimately related to the fact that the constant disk is being glued to those solutions at the opposite boundary components. One can adopt a proof (which we will not provide in detail here) from the following classical example where an analogous sign issue has been treated. Suppose $L_0,L_1$ are monotone Lagrangian submanifolds with obstruction numbers $m_0(L_i)\in\Z$, $i=0,1$. Then the Floer differential $d$ on $CF^*(L_0,L_1)$ satisfies: $d^2=m_0(L_0)-m_0(L_1)$. This relation arises from Maslov index~2 disks bubbling off the two sides of a 1-dimensional moduli space of Floer strips \cite{Oh95}, see also \cite[(2.3.9)]{She13} and \cite[Figure~2]{Smith15}. Here, indeed, the gluing of the side bubbles contributes with the opposite signs and results in the term $m_0(L_0)-m_0(L_1)$. The reference for the signs in this relation is \cite{FO3Book}, see specifically Remark~3.2.21(1), Formula~(3.3.4) and Chapter~8 from that book.
Although in our case we would be gluing a constant disk rather than Maslov index~2 disk, and the non-constant curve in Figure~\ref{fig:two_stretches_bubble}(b) satisfies a different equation than the standard Floer one (e.g.~our equation is not $\R$-invariant), the required orientation analysis is essentially the same.

The outcome of the cancellation discussed above is that the count of configurations in Figure~\ref{fig:loop_morse}(a), i.e.~$\sharp\M^\gamma(x;x_0)$, equals  the count of configurations in Figure~\ref{fig:loop_morse}(e), and it remains to compute the latter.

\subsubsection*{Step 3. A Morse-theoretic computation}
Let us look at Figure~\ref{fig:loop_morse}(e). Recall that  $q\in L$ is the minimum of $g$, so the semi-infinite flowline of $\nabla g$ flowing into $q$ must be constant. Second, we have arranged the principal (central) disk to be constant, as well. So the configurations in Figure~\ref{fig:loop_morse}(e) reduce to those shown in Figure~\ref{fig:loop_count}.

\begin{figure}[h]
 \centerline{\includegraphics{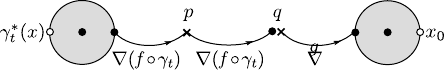}}
\caption{The domains when $l=+\infty$ and the principal disk together with a flowline are constant. Here $p$ is an index 1 critical point of $f\circ \gamma_t$, and $q$ is the minimum of $f$.}
\label{fig:loop_count}
\end{figure}

The free parameter $t=e^{is}\in S^1\setminus \{1\}$ is ``unseen'' by the domain after the principal disk became ghost (i.e.~constant), but the equations still depend on it. First, consider the left disk and the left flowline in Figure~\ref{fig:loop_count}, forgetting the rest of the configuration. Those disk and flowline satisfy the $\gamma_t$-pullback of the equation defining the PSS map $\Psi$, so for each $t$ the linear combination of points $p$ appearing as limits of such configurations equals $\gamma_t(\Psi(x))$, where $\Psi(x)\in C^1(L)$ is the PSS image which is a linear combination of index 1 critical points of $f$, so that $\gamma_t(\Psi(x))$ is a combination of critical points of $f\circ\gamma_t$. 

Let us now add back the middle flowline, still forgetting the right flowline and the right disk, and count the resulting configurations. The middle flowline is a semi-infinite flowline of $\nabla(f\circ \gamma_t)$ ending at the point $q$; note that $q$ is \emph{not} a critical point of $f$. 
Suppose for the moment that we allow the right end of the middle flowline to be free (not constrained to $q$) and denote the moduli space of such configurations by $P$. Then we can consider the evaluation map at the right end of the flowline, $\ev\co P\to L$. The image of $\ev$, as a chain, is a linear combination of unstable manifolds, with respect to the function $f\circ \gamma_t$, associated with the linear combination of the critical points $p$ which we have previously computed. Recall that this linear combination of points $p$ equals $\gamma_t(\Psi(x))$. Consequently, if we denote by $C_{\Psi(x)}\subset L$ the disjoint union of (oriented, codimension 1) unstable manifolds of the Morse cochain $\Psi(x)\in C^1(L)$ with respect to $f$, then 
$$
P=(S^1\setminus\{1\})\times C_{\Psi(x)},\qquad \ev(t,z)=\gamma_t(z).
$$
Those configurations which evaluate at $q\in L$ are  the intersection points $C_{\Psi(x)}\cap l$, where $l=\{\gamma_t(q)\}_{t\in S^1}$ is the orbit of $q$. By perturbing $\gamma_t$ and $f$, the intersections can be easily made transverse, and we get:
$$
\sharp (P\times_\ev \{q\})=[C_{\Psi(x)}]\cdot [l]=\langle \Psi(x),l\rangle.
$$
Recall this is the count of the part of confugurations in Figure~\ref{fig:loop_count} which end up at $q$.
Finally, the count of the rightmost flowlines (emerging from $q$) plus the right disks in Figure~\ref{fig:loop_count} equals $1_L\in CF^0(L,L)$. Indeed, the $g$-unstable manifold of the minimum $q$ is the whole manifold $L$ (minus a codimension 2 subset), so the count is the same as the count of the rightmost disks only, and the latter by definition produces $1_L$.

Putting everything together and noting that $\epsilon=\epsilon(l)$ by Remark~\ref{rem:signs_contract} (since the total boundary in Figure~\ref{fig:loop_count} is contractible, for a small Hamiltonian perturbation), we get the statement of Proposition~\ref{prop:compute_CO_1}. 
One last thing is to argue that the moduli spaces we have been using were regular. 

According to \cite{She13}, the regularity of moduli spaces of pearly trajectories consisting of pseudo-holomorphic disks and flowlines is equivalent to the regularity of the separate disks and flowlines not constrained to satisfy the incidence conditions, plus the transversality of the evaluation maps which account for the incidence conditions.

The non-constant disks in the proof carry the pseudo-holomorphic equation with a Hamiltonian perturbation  which makes them regular.
The constant disks  are  known to be regular on their own; and it is easy to see that for generic $f$, $g$, the flowlines are transverse to the evaluation maps for all appearing configurations.
\end{proof}

\subsection{Checking non-triviality in Hochschild cohomology}
In this subsection we prove Theorem~\ref{th:CO_invt_lag}(b),~(c) (recall that part (a) was proved earlier, see Corollary~\ref{cor:compute_CO_0}). We have computed in Proposition~\ref{prop:compute_CO_1} the map $\CO^1(\S(\gamma))|_{CF^1(L,L)}$, and it remains to see when the result survives to something non-trivial on the level of Hochschild cohomology, and thus  distinguishes $\CO^*(\S(\gamma))\in HH^*(L,L)$ from the unit in $HH^*(L,L)$.

First, let us quickly recall the definition of Hochschild cohomology. Let $A$ be an \ai algebra, and assume it is $\Z/2$ graded if $\chr \k\neq 2$. When $\chr \k=2$, we consider $A$ as an ungraded algebra. The space of Hochschild cochains is, by definition $$CC^*(A,A)=\prod_{k\ge 0}Hom(A^{\otimes k},A).$$ If $A$ is $\Z/2$-graded then $CC^*(A,A)$ is $\Z/2$-graded: $CC^r(A,A)=\prod_{k\ge 0}Hom(A^{\otimes k},A[r-k])$.
If $h=\{h^k\}_{k\ge 0}\in CC^*(A)$, $h^k\co A^{\otimes k}\to A$, then the Hochschild differential of $h$ is the sequence of maps
$$
\begin{array}{l}
(\bd h)^k(a_k,\ldots, a_1)=\\
\sum_{i+j\le k}(-1)^{(r+1)(|a_1|+\ldots+|a_i|+i)}\cdot\mu^{k+1-i}(a_k,\ldots a_{i+j+1},h^j(a_{i+j},\ldots,a_{i+1}),a_i,\ldots, a_1)+\\
\sum_{i+j\le k}(-1)^{r+1+|a_1|+\ldots+|a_i|+i}\cdot h^{k+1-i}(a_k,\ldots a_{i+j+1},\mu^j(a_{i+j},\ldots,a_{i+1}),a_i,\ldots, a_1).
\end{array}
$$
Here $r$ is the $\Z/2$-degree of $h$.  (When $k=0$, the agreement is that $Hom(A^{\otimes 0},A)=A$, so $h^0$ is an element of $A$.) If $\chr \k=2$, we do not need the gradings as the signs do not matter.

Let us return to the \ai algebra $CF^*(L,L)$. We continue to use the Morse $\Z$-grading on the vector space $CF^*(L,L)$ keeping in mind this grading is not respected by the \ai structure. If $L$ is oriented, the reduced $\Z/2$-grading is preserved by the \ai structure so $CF^*(L,L)$ is a $\Z/2$-graded \ai algebra. If $L$ is not oriented, we must suppose $\chr \k=2$.

\begin{proof}[Proof of Theorem~\ref{th:CO_invt_lag}(b)]

We continue to work with $\CO^*$ on chain level. Because the homological closed-map is unital \cite[Lemma~2.3]{She13},
 the Hochschild cohomology unit  is realised by the cochain $1_{HH}\coloneqq \CO^*(1)\in CC^*(L,L)$, where $1$ is the unit in $QH^*(X)$. The \ai category $CF^*(L,L)$ need not be strictly unital, so the maps $(1_{HH})^k=\CO^k(1)$ need not vanish for $k>0$. However, because the identity Hamiltonian loop preserves $L$ and has homologically trivial orbits on it, Proposition~\ref{prop:compute_CO_1} applies to $1=\S(\id)\in QH^*(X)$ and says that $(1_{HH})^1(x)=0$ for any Floer cocycle $x\in CF^1(L,L)$.

Suppose $\CO^*(S(\gamma))+\alpha \cdot 1_{HH}$ is the coboundary of an element $h\in CC^*(L,L)$, for some $\alpha\in \k$. By comparing $(\bd h)^0$ with $\CO^0(S(\gamma))+\alpha\cdot (1_{HH})^0$, see Corollary~\ref{cor:compute_CO_0}, we get:
$$
 \mu^1(h^0)=(-1)^{\epsilon(l)}\rho(l)\cdot 1_L+\alpha\cdot 1_L.
$$
Here $\mu^1$ is the Floer differential and $1_L$ is a chain-level cohomology unit \eqref{eq:1_chain}. The assumption  $HF^*(L,L)\neq 0$ implies that the Floer cohomology unit $1_L$ cannot be killed by the Floer differential. Therefore, we cannot solve the above equation unless  $\alpha=-(-1)^{\epsilon(l)}\rho(l)$ and $\mu^1(h^0)=0$.
Next, by comparing 
$(\bd h)^1$ with $\CO^1(S(\gamma))+\alpha\cdot (1_{HH})^1$, see Proposition~\ref{prop:compute_CO_1}, for any Floer cocycle $x\in CF^1(L,L)$ we get
\begin{multline*}
(-1)^{|h^0|+1}\mu^2(x,h^0)+(-1)^{(|h^0|+1)(|x|+1)}\mu^2(h^0,x)
\\+
\mu^1(h^1(x))+(-1)^{|h^1|+1}h^1(\mu^1(x))
=(-1)^{\epsilon(l)}\rho(l)\cdot \langle\Psi(x),l\rangle\cdot 1_L.
\end{multline*}
Here we are using the version of $\Psi$ as in (\ref{eq:PSS_homol}).
Because $x$ is a Floer cocycle, the last summand of the left hand side vanishes. If $\chr \k=2$, redenote $a\coloneqq h^0\in CF^*(L,L)$. If $\chr \k\neq 2$, let $a\in CF^{odd}(L,L)$ be the odd degree part of $h^0$. By computing the signs in the above equality we get, for any Floer cocycle $x\in CF^1(L,L)$:
$$
\mu^2(x,a)+\mu^2(a,x)+
\mu^1(h^1(x))=(-1)^{\epsilon(l)}\rho(l)\cdot \langle\Psi(x),l\rangle\cdot 1_L.
$$
Recall that $\mu^1(h^0)=0$ so $\mu^1(a)=0$ as well, and we get the following equality for Floer cohomology classes $[x],[a]\in HF^*(L,L)$ and $\Psi(x)\in H^1(L)$:
$$
\mu^2([x],[a])+\mu^2([a],[x])=(-1)^{\epsilon(l)}\rho(l)\cdot \langle\Psi(x),l\rangle\cdot 1_L\in HF^*(L,L).
$$
Now put $x=\Phi(y)$, where $y\in C^1(L)$ is a Morse cochain and $\Phi$ is the chain-level map from (\ref{eq:PSS_chain}). The above equality means that for all $[y]\in H^1(L)$,
$$
\mu^2(\Phi([y]),[a])+\mu^2([a],\Phi([y]))=(-1)^{\epsilon(l)}\rho(l)\cdot \langle [y],l\rangle\cdot 1_L\in HF^*(L,L).
$$
This time, we have used the homology-level version of $\Phi$ from (\ref{eq:PSS_homol}).
The above equality is exactly prohibited by the hypothesis of Theorem~\ref{th:CO_invt_lag}(b), so  Theorem~\ref{th:CO_invt_lag}(b) is proved.
\end{proof}

\begin{proof}[Proof of Theorem~\ref{th:CO_invt_lag}(c)]
Note that, on the homology level, $$\CO^0(\S(\gamma)*Q)=\CO^0(\S(\gamma))\cdot \CO^0(Q)=1_L\cdot \CO^0(Q)=\CO^0(Q)$$ 
(here the dot denotes the $\mu^2$ product), so the only possible linear relation between $\CO^*(S(\gamma)*Q)$ and $\CO^*(Q)$ is that $$\CO^*((\S(\gamma)-1)*Q)=0,$$ 
where $1$ is the unit in $QH^*(X)$. We have $\CO^*((\S(\gamma)-1)*Q)=\CO^*(\S(\gamma)-1)\star \CO^*(Q)$, where the symbol $\star$ denotes the Yoneda product in Hochschild cohomology.

Let us now return to working with $\CO^*$ on the chain level. Recall that if $\phi=\{\phi^k\}_{k\ge 0}$, $\psi=\{\psi^k\}_{k\ge 0}\in CC^*(L,L)$ are Hochschild cochains, the $k=1$ part of their Yoneda product by definition equals
$$
(\phi\star\psi)^1(x)=\pm \mu^2(\phi^1(x),\psi^0)\pm\mu^2(\phi^0,\psi^1(x)).
$$
There is an explicit formula for the signs which we do not need. Let us apply this formula to $\CO^*(\S(\gamma)-1)$ and
$\CO^*(Q)$. We know that
 $(\CO^*(\S(\gamma)-1))^0=0$ by Corollary~\ref{cor:compute_CO_0}, and $(\CO^*(\S(\gamma)-1))^1(x)=\CO^1(\S(\gamma))(x)$ is given by Proposition~\ref{prop:compute_CO_1} for any Floer cocycle $x\in CF^1(L,L)$. Consequently, we get:
$$
\CO^1((\S(\gamma)-1)*Q)(x)=(-1)^{\epsilon(l)}\rho(l)\cdot\langle\Psi(x),l\rangle\cdot \CO^0(Q).
$$
From this point, the rest of the proof follows the one of Theorem~\ref{th:CO_invt_lag}(b).
\end{proof}

\section{The closed-open map for real toric Lagrangians}
\label{sec:real_toric}
In this section, after a short proof of Theorem~\ref{th:toric_CO_0}, we look for further examples of real toric Lagrangians where Theorem~\ref{th:CO_invt_lag} can be effectively applied. We also discover that Proposition~\ref{prop:compute_CO_1}, after additional work, allows to show that
the Fukaya \ai algebra of some of the considered Lagrangians is not formal.
In particular, we prove the results about real toric Lagrangians stated in Section~\ref{sec:intro} (except for Proposition~\ref{prop:CO_RPn_Inject} and Corollary~\ref{cor:RPn_generate}, which have been proved therein).
We work with a coefficient field $\k$ of characteristic two throughout this section.
\subsection{A proof of Theorem~\ref{th:toric_CO_0}}
Let $X$ be a compact, smooth toric Fano variety, and $D\subset X$ be a toric divisor corresponding to one of the facets of the polytope defining $X$. There is a Hamiltonian circle action $\gamma$ on $X$ associated with $D$, which comes from the toric action by choosing a Hamiltonian which achieves maximum on $D$. A theorem of McDuff and Tolman \cite{MDT06} says the following.

\begin{theorem}
\label{th:mcduff_tolman}
We have $\S(\gamma)=D^*$, where $D^*\in QH^*(X)$ is the Poincar\'e dual of~$D$. \qed
\end{theorem}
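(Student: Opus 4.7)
The plan is to compute $\S(\gamma)$ directly by analyzing pseudoholomorphic sections of the Hamiltonian fibration $\pi\co E_\gamma \to S^2$ associated with $\gamma$, exploiting the fact that the Hamiltonian generating $\gamma$ is (up to an additive constant) the component of the toric moment map dual to the primitive normal vector of the facet of $D$. Throughout, I would use the standard definition $\S(\gamma)=\sum_A (\ev_*[\overline{\M}(E_\gamma,A)])\, q^{-\langle c_1^{\mathrm{vert}},A\rangle}$, where the sum runs over section classes $A\in H_2(E_\gamma;\Z)$.

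First, I would construct $E_\gamma$ by clutching two copies of $D^2 \x X$ via $\gamma$ over the equator. Because $\gamma$ is a circle subgroup of the toric action, $E_\gamma$ extends to a compact toric variety of one higher complex dimension, in which the maximum fixed locus of $\gamma$ cuts out a new facet. Under the identification $\Crit_{\max}(H_\gamma) = D$, this facet provides a canonical "maximum section" $\sigma_{\max}\co S^2 \to E_\gamma$ landing in the $D$-fiber over each point. The vertical Chern number of the class $A_{\max}=[\sigma_{\max}]$ is readable off the fan and, because the loop sits inside the toric torus, equals exactly $\dim_\C D$ minus the shift needed to put the contribution of $\sigma_{\max}$ in cohomological degree $2$.

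Next, I would fix a toric-invariant (integrable) almost complex structure on $E_\gamma$ and verify regularity of the moduli space of sections in class $A_{\max}$. The evaluation map at a marked point identifies this moduli space with a cycle representing $[D]\in H_*(X)$, so the contribution to $\S(\gamma)$ from $A_{\max}$ is precisely $D^*\in QH^*(X)$. To complete the proof, it remains to show no other section classes contribute.

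The main obstacle is ruling out contributions from classes $A = A_{\max} + \alpha$, where $\alpha\in H_2(X;\Z)$ is represented by a holomorphic sphere in the fiber. Here the toric Fano hypothesis is essential: with the integrable toric complex structure on $E_\gamma$, any such class is represented by $\sigma_{\max}$ together with fiber bubbles, and each bubble has strictly positive symplectic area and strictly positive first Chern number by the Fano condition. Monotonicity then forces the corresponding contribution into a strictly shifted cohomological degree relative to the $q$-weighting built into $\S(\gamma)$, while the rigidity of $\sigma_{\max}$ among toric sections (sections are classified combinatorially by lattice points in the moment polytope of $E_\gamma$) prevents alternative section classes of the same symplectic area. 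The only surviving term is thus the one from $A_{\max}$, yielding $\S(\gamma) = D^*$ as claimed.
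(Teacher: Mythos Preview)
The paper does not give its own proof of this statement; it is quoted as a result of McDuff and Tolman \cite{MDT06} and marked with a terminal \qed. There is nothing in the paper to compare your argument against.

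That said, your sketch follows the broad architecture of the McDuff--Tolman argument: build the clutched fibration $E_\gamma$, recognise it as toric, single out the maximal section $\sigma_{\max}$ over the fixed divisor $D$, and show its contribution is $D^*$. The weak point is your final paragraph. The claim that section classes $A_{\max}+\alpha$ are represented only by $\sigma_{\max}$ together with fibre bubbles is not correct in general---smooth holomorphic sections exist in many classes---and the appeal to a ``strictly shifted cohomological degree relative to the $q$-weighting'' does not by itself kill contributions in the setting of this paper, which works over a field $\k$ with no Novikov variable. What McDuff and Tolman actually do is enumerate all holomorphic section classes via the moment polytope of $E_\gamma$ and then use the Fano (more precisely NEF) hypothesis to show that the higher-order corrections vanish, not merely that they sit in a different grading. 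Your parenthetical remark about sections being classified by lattice points gestures at this combinatorial step, but the substance of the vanishing argument is missing from your outline.
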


The loop $\gamma$ never preserves the real Lagrangian $L\subset X$, but if we parametrise $\gamma=\{\gamma_t\}_{t\in [0,1]}$ then $\gamma_{1/2}(L)=L$, see \cite{Ha13}. Consequently,  $\alpha=\{\gamma_t(L)\}_{t\in [0,1/2]}$ is a loop of Lagrangian submanifolds, and moreover we have $\alpha^2=\{\gamma_t(L)\}_{t\in [0,1]}$ in the space of Lagrangian loops.
There is an associated Lagrangian Seidel element $\S_L(\alpha)\in HF^*(L,L)$, which counts pseudo-holomorphic disks with rotating boundary condition $\alpha$, and a single boundary puncture which evaluates to an element of $HF^*(L,L)$.
A theorem of Hyvrier \cite[Theorem~1.13]{Hy16}, based on the disk doubling trick, computes $\S_L(\alpha)$.

\begin{theorem}
\label{th:hyvrier}
We have $\S_L(\alpha)=[L\cap D]^*$, where $L\cap D$ is the clean intersection that has codimension 1 in $L$, and $[L\cap D]^*\in H^1(L)\subset HF^*(L,L)$ is its dual class. \qed
\end{theorem}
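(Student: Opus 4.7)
The plan is to apply the disk-doubling trick, which reduces the count defining $\S_L(\alpha)$ to the Seidel element $\S(\gamma)\in QH^*(X)$, whose value is supplied by Theorem~\ref{th:mcduff_tolman}. The key enabling feature is the toric anti-holomorphic involution $\tau\co X\to X$ with $\fix(\tau)=L$, which will allow disks with rotating boundary along $\alpha$ to be folded into honest spheres contributing to $\S(\gamma)$.

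First I would work with the Hamiltonian fibration $\pi\co E_\alpha\to D^2$ associated to the half-loop, whose fiber is $X$ and whose boundary monodromy traces out $\alpha$. Since $\alpha_0=\alpha_{1/2}=L$, the copies of $L$ in the fibers over $\partial D^2$ assemble into a Lagrangian $L_\alpha\subset \pi^{-1}(\partial D^2)$, and $\S_L(\alpha)$ is defined as a count of pseudo-holomorphic sections of $E_\alpha$ with boundary on $L_\alpha$ carrying one boundary puncture. The involution $\tau$ lifts to a fiberwise anti-holomorphic involution $\tilde\tau$ of $E_\alpha$ whose fixed locus contains $L_\alpha$; gluing two copies of $E_\alpha$ along their boundaries via $\tilde\tau$ reconstructs the closed Hamiltonian fibration $E_\gamma\to S^2$ counting $\S(\gamma)$. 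This uses precisely the identity $\alpha^2=\gamma$ recorded in the excerpt.

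Under this gluing, every pseudo-holomorphic section $u$ of $E_\alpha$ with boundary on $L_\alpha$ doubles to a $\tilde\tau$-equivariant pseudo-holomorphic section $\widetilde u$ of $E_\gamma$ whose equator lies in the real locus, and the boundary puncture of $u$ at $t\in\partial D^2$ becomes an interior marked point of $\widetilde u$ constrained to $L$. Conversely, a regular $\tilde\tau$-equivariant section of $E_\gamma$ meeting $L$ at a prescribed point halves to a section of $E_\alpha$. This bijection lets me pair $\S_L(\alpha)$ with any cycle $c\subset L$ and rewrite the count as the number of Seidel sections of $E_\gamma$ passing through $c\subset L\subset X$; by Theorem~\ref{th:mcduff_tolman}, this equals $\langle D^*,[c]\rangle_X$, which by the clean codimension-one intersection $D\cap L$ equals $[L\cap D]\cdot [c]$ in $L$. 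Poincaré duality in $L$ then yields $\S_L(\alpha)=[L\cap D]^*$, with the inclusion into $HF^*(L,L)$ provided by the PSS map $\Phi$ from Section~\ref{sec:main_proof}.

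The hard part will be the doubling step itself: one must choose $\tilde\tau$-equivariant almost complex structures and Hamiltonian perturbation data on $E_\alpha$ and $E_\gamma$ that are simultaneously regular, verify that doubling preserves regularity and induces the correct orientation, and rule out or cancel in $\tau$-pairs those non-equivariant Seidel sections of $E_\gamma$ through points of $L$ that might otherwise spoil the bijection of counts. Handling the sign and orientation bookkeeping --- particularly when $\chr\k\neq 2$ --- is where I expect the bulk of the analytic work to live; this is what is carried out in \cite{Hy13}.
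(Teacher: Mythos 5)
The paper does not prove this statement itself: it is quoted verbatim from Hyvrier \cite[Theorem~1.13]{Hy13}, with the only indication of method being that it is ``based on the disk doubling trick.'' Your proposal sketches exactly that doubling strategy (folding sections with rotating Lagrangian boundary condition $\alpha$ into $\tau$-equivariant sections of the fibration for $\gamma=\alpha^2$ and invoking Theorem~\ref{th:mcduff_tolman}), and defers the analytic core --- equivariant regularity, orientations, and the pairing-off of non-equivariant sections --- to \cite{Hy13}, so it is consistent with the paper's treatment; note only that in the real-Lagrangian context the paper works over $\chr\k=2$, so the sign bookkeeping you flag is not actually needed here.
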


The inclusion $H^1(L)\subset HF^*(L,L)$ is the PSS map $\Phi$ from Section~\ref{sec:main_proof}, which is injective because $HF^*(L,L)\cong H^*(L)$ by Theorem~\ref{th:haug}.

\begin{proof}[Proof of Theorem~\ref{th:toric_CO_0}]
It suffices to prove that $\CO^0(D^*)=\D(\F(D^*))$, where $D\subset X$ is a toric divisor as above and $D^*\in QH^*(X)$ is its dual class, because  such $D^*$ generate $QH^*(X)$ as an algebra \cite{MDT06}. Let $\gamma$ be the Hamiltonian loop corresponding to $D$ as above, and $\alpha$ be the Lagrangian loop as above, such that $\alpha^2=\{\gamma_t(L)\}_{t\in [0,1]}$.
It follows from Theorem~\ref{th:Charette_Cornea} that $$\CO^0(\S(\gamma))=\S_L(\alpha^2),$$ and the latter can be rewritten as
$\F(\S_L(\alpha))$, where $\F$ is the Frobenius map on $HF^*(L,L)$. 
By Theorem~\ref{th:mcduff_tolman}, $\S(\gamma)=D^*$, and by Theorem~\ref{th:hyvrier}, $\S_L(\alpha)=[L\cap D]^*$. Finally,
if we look at Haug's construction \cite{Ha13} of the Duistermaat isomorphism $\D$, we will see that $[L\cap D]^*=\D(D^*)$. Putting everything together, we get $$\CO^0(D^*)=\F(\D(D^*)).$$ Because $\D$ is a ring map, it commutes with the Frobenius maps on $HF^*(L,L)$ and $QH^*(X)$, and the theorem follows.
\end{proof}

\subsection{Split-generation for toric varieties with Picard rank 2}
It is known that the unique toric variety with Picard number $1$ is the projective space. 
By a  theorem of Kleinschmidt~\cite{Kl88}, see also~\cite{CMS10},  every  $n$-dimensional toric Fano variety whose Picard group has rank $2$ (i.e.~whose fan has $n+2$ generators) is isomorphic to the projectivisation of a sum of line bundles over $\CP^{n-k}$:
\begin{equation}
\label{eq:define_X}
X(a_1,\ldots,a_k)\coloneqq\P_{\CP^{n-k}}(\O\oplus\O(a_1)\oplus\ldots\oplus \O(a_k)),\ a_i\ge 0,\ \sum_{i=1}^ka_i\le n-k-1. 
\end{equation}
(The imposed conditions on the $a_i$ are equivalent to $X$ being toric Fano). The $n+2$ vectors in $\Z^n$ generating the fan of $X(a_1,\ldots,a_k)$ are the columns of the following matrix:
\begin{equation}
\label{eq:matrix_X}
\left(
\begin{array}{c@{}c@{}}
 \hspace{2em} I_{n\times n}\hspace{2em}
\begin{smallmatrix}
-1 &a_1\\
\vdots&\vdots\\
-1&a_k\\
0&-1\\
\vdots&\vdots \\
0&-1
\end{smallmatrix}
\end{array}
\right)
\end{equation}
The minimal Chern number of $X(a_1,\ldots,a_k)$ equals $\gcd(k+1,n-k+1-\sum a_i)$, see \cite{QR98}.
Some of these varieties provide further examples where, using Theorems~\ref{th:toric_CO_0}~\and~\ref{th:CO_invt_lag}, we can prove the injectivity of $\CO^*$ and  deduce split-generation. 

\begin{theorem}
\label{th:hirzebruch}
 Let $X\coloneqq X(a_1,\ldots,a_k)$ be as above, $L\subset X$ the real Lagrangian, $\k$ a field of characteristic 2.
Suppose all $a_i$ are odd and $\gcd(k+1,n-k+1-\sum a_i)\ge 2$.
\begin{enumerate}
 \item[(a)] If $n-k+1$ is odd, then $\CO^0\co QH^*(X)\to HF^*(L,L)$ is injective.
\item[(b)] If $n-k+1$ is even, $k$ is even and the numbers $a_i$ come in equal pairs, then $\CO^*\co QH^*(X)\to HH^*(L,L)$ is injective while $\CO^0$ is not.
\end{enumerate}
In both cases $L$ split-generates $\Fuk(X)_0$.
\end{theorem}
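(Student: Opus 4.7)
The plan is to reduce the question to a concrete computation in $QH^*(X)$ in characteristic $2$ and then, in part (b), combine the resulting Frobenius analysis with the Hamiltonian loop technique as in the proof of Proposition~\ref{prop:CO_RPn_Inject}. Applying Batyrev's quantum Stanley--Reisner presentation to the two primitive collections read off from the matrix (\ref{eq:matrix_X}), monotonically normalised, and using the $\chr\k=2$ simplification $H - a_i h = H + h$ (valid because each $a_i$ is odd), one obtains
\[
QH^*(X)\cong \k[H,h]\big/\bigl((H+h)^k H - 1,\ h^{n-k+1} - (H+h)^{\sum a_i}\bigr).
\]
Setting $s = H + h$, the first relation forces $s$ to be a unit with $H = s^{-k}$ and $h = s + s^{-k}$, and substitution into the second yields a one-variable presentation $QH^*(X) \cong \k[s]/p(s)$ with
\[
p(s) \;=\; (s^{k+1}+1)^{n-k+1} + s^{k(n-k+1) + \sum a_i},
\]
a polynomial of degree $(k+1)(n-k+1)$ matching $\dim H^*(X;\k)$.

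For part (a), the Frobenius map $\F\co QH^*(X) \to QH^{2*}(X)$ is injective iff $QH^*(X)$ is reduced, which holds iff $p(s)$ is squarefree in $\k[s]$. A computation of $p'(s)$ in characteristic $2$, using $n-k+1$ odd together with the parities forced by the hypothesis that all $a_i$ are odd, reduces the check $\gcd(p,p')=1$ to verifying that neither $s$ nor $s^{k+1}+1$ divides $p$: the former is immediate since $p(0)=1$, and the latter follows by computing $p(s) \pmod{s^{k+1}+1}$ and invoking the divisibility $\gcd(k+1,n-k+1-\sum a_i)\ge 2$. Injectivity of $\F$, hence of $\CO^0 = \D \circ \F$ via Theorem~\ref{th:toric_CO_0}, gives injectivity of $\CO^*$, and split-generation then follows from Theorem~\ref{th:generation_crit}(b).

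For part (b), the additional parity assumptions force $p'(s)=0$ in $\k[s]$, so $p(s) = q(s)^2$ (take the term-by-term Frobenius square root) and $QH^*(X) \cong \k[s]/q(s)^2$ is a local ring whose nilradical is the principal ideal $(q(s)) = \ker \F = \ker \CO^0$. The geometric input is then a Hamiltonian loop $\gamma$ preserving the real Lagrangian $L$, built by pairing the $n-k+1$ homogeneous coordinates of the base $\CP^{n-k}$ and the paired line-bundle summands $\O(a_{2j-1})\oplus \O(a_{2j})$ of the fibre, and taking a simultaneous $\pi$-rotation on each pair. Its Seidel element is computed using Theorem~\ref{th:mcduff_tolman} together with the multiplicativity of the Seidel representation on composed loops, and the key verification is that $\S(\gamma)+1$ is a unit multiple of $q(s)$, so that $\S(\gamma)-1$ generates the nilradical and squares to zero. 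With this in hand, the argument concludes as in the proof of Proposition~\ref{prop:CO_RPn_Inject}: any nonzero $P \in \ker \CO^0$ can be written as $P = (\S(\gamma)-1)*Q$ with necessarily $\CO^0(Q) \ne 0$ (using $(\S(\gamma)-1)^2=0$), and Theorem~\ref{th:CO_invt_lag}(c) applies because the orbit class $l \in H_1(L)$ is nonzero while $HF^*(L,L)$ is commutative by Theorem~\ref{th:haug}, so the left-hand side of $(**)$ vanishes and the right-hand side is nonzero. Split-generation again follows from Theorem~\ref{th:generation_crit}(b). The main obstacle throughout is the explicit identification of $\S(\gamma)$ as a generator of the nilradical; once that step is carried out, the rest of the proof is essentially forced by the template of Proposition~\ref{prop:CO_RPn_Inject}.
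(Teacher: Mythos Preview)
Your overall strategy matches the paper's: present $QH^*(X)$ explicitly, analyse the Frobenius kernel, and in (b) exhibit a real Hamiltonian loop whose Seidel element minus one generates that kernel, then conclude via Theorem~\ref{th:CO_invt_lag}(c) exactly as in Proposition~\ref{prop:CO_RPn_Inject}. Your one-variable reduction via $s=H+h$ is in fact more direct than the paper's Bezout-style argument in Lemma~\ref{lem:hirzebruch_ker_f_generator}, and one checks that in your variable $\S(\gamma)+1=s^{-(2qr+p)}q(s)$ (with $k=2q$, $n-k+1=2r$, $\sum a_i=2p$), confirming the key identification you flag as the main obstacle.

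There is, however, a genuine gap in your part (a). Writing $N=n-k+1$ and $M=kN+\sum a_i$, the exponent $M$ is always even under the standing hypothesis that all $a_i$ are odd (since $\sum a_i\equiv k\pmod 2$), so in characteristic $2$ one has $p'(s)=N(k{+}1)\,s^k(s^{k+1}+1)^{N-1}$. When $k$ is odd this vanishes identically, $p$ is a perfect square, and $\CO^0$ is \emph{not} injective: for instance $n=3$, $k=1$, $a_1=1$ satisfies every hypothesis of (a) yet yields $QH^*(X)\cong\k[s]/(s^3+s+1)^2$. The paper's own proof of (a) only asserts that ``one can show'' the Frobenius is an isomorphism, so the issue is invisible there; your more explicit route has the virtue of exposing it. Your argument does go through when $k$ is even, and in that case the invocation of the gcd hypothesis is actually unnecessary, since $p(s)\equiv s^M$ is a unit modulo $s^{k+1}+1$.

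Two minor points on part (b). The ring $\k[s]/q(s)^2$ need not be local unless $q$ is a prime power; what you actually use, and what is true, is $\ker\F=(q)$ with $(q)^2=0$. Also, you should not skip the check that the orbit class $l\in H_1(L)$ is nonzero---the paper supplies this as Lemma~\ref{lem:hirzebruch_orbit_homology} by projecting to the base $\RP^{n-k}$.
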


\begin{proof}
 Let $x,y\in H^{2}(X)$ be the generators corresponding to the last two columns of the matrix (\ref{eq:matrix_X}). They generate $QH^*(X)$ as an algebra and satisfy the following relations when $\chr\k=2$:
$$
 x(x+y)^{k}=1,\quad y^{n-k+1}(x+y)^{-\sum a_i}=1.
$$
(For brevity, we no longer use the symbol $*$ to denote the quantum product.)
If $n-k+1$ is odd, one can show that the Frobenius endomorphism $\F$ on $QH^*(X)$ is an isomorphism, so $\CO^0$ is injective by Theorem~\ref{th:toric_CO_0}. It follows that $\CO^*$ is also injective, and split-generation follows from Theorem~\ref{th:generation_crit}. Part (a) is proved.

In the rest of the proof we work with the case (b), so let us redenote:
$n-k+1=2r$, $k=2q$, $\sum a_i=2p$. The rewritten relations in $QH^*(X)$ are:
\begin{equation}
\label{eq:QH_X_p_q}
 x(x+y)^{2q}=1,\quad y^{2r}(x+y)^{-2p}=1.
\end{equation}

\begin{lemma}
\label{lem:hirzebruch_ker_f_generator}
For the ring $QH^*(X)$ as in (\ref{eq:QH_X_p_q}), $\ker \F$ is the ideal generated by $y^r(x+y)^{-p}+1$.
\end{lemma}

\begin{proof}
Equations (\ref{eq:QH_X_p_q}) are equivalent to
$$
x^{-p}=y^{2rq},\quad y^{4rq+2r}+y^{4rp+2p}+1=0,
$$
where the second equation is rewritten from the second equation in (\ref{eq:QH_X_p_q}) using the substitution $x^{-p}=y^{2rq}$. This means if we denote
$$
R(y)=y^{2rq+r}+y^{2rq+p}+1,
$$
then $R(y)=y^r(x+y)^{-p}+1$. Denote $g=\gcd(2rq,p)$ and let $\alpha,\beta\in \Z$ be such that
$$
-2rq\cdot\alpha+p\cdot \beta=g.
$$
Consider the map $\phi\co  \k[u]\to QH^*(X)$ given by $u\mapsto x^\alpha y^\beta$; this map is onto because  we get 
\begin{equation}
\label{eq:phi_acting_on_u}
\phi(u^{p/g})= y,\quad \phi(u^{2qr/g})= x^{-1} 
\end{equation}
using the given relations (note that the powers $p/g$, $2qr/g$ are integral). Further, $\ker\phi$ is obviously the ideal generated by $V(u)^2$ where $V(u)\coloneqq R(u^{p/g})$, and we conclude that $\phi$ provides an isomorphism 
\begin{equation}
\label{eq:QH_Hirzebruch_poly}
 \phi\co \k[u]/V(u)^2\xrightarrow{\cong}QH^*(X),\quad V(u)=u^{\frac p g (2rq+r)}+u^{\frac p g (2rq+p)}+1.
\end{equation}
It is clear that $V(u)$ generates the kernel of the Frobenius map on $\k[u]/V(u)^2$.
Because $V(u)$ corresponds to $y^r(x+y)^{-p}+1$ under $\phi$, Lemma~\ref{lem:hirzebruch_ker_f_generator} follows.
\end{proof}

We continue  the proof of Theorem~\ref{th:hirzebruch}(b). It turns out that, similarly to the case of $\RP^n\subset \CP^n$ studied in the introduction, 
the generator of $\ker\F$ from Lemma~\ref{lem:hirzebruch_ker_f_generator} equals $\S(\gamma)+1$
for a real Hamiltonian loop $\gamma$ on $X$ which preserves $L$ setwise and has homologically non-trivial orbits on it.
To construct $\gamma$, we will need the additional assumption that the $a_i$ come in equal pairs, so we assume the sequence $(a_i)_{i=1}^{2q}$ is $(a_1,a_1,\ldots,a_q,a_q)$.


 Recall that $X$, being a toric manifold, is a quotient of  $\C^{2r+2q+1}$ minus some linear subspaces determined by the fan, by an  action of $(\C^*)^2$. Using the common notation, this action is given by
$z\mapsto t_1^{v_1}t_2^{v_2}z$, where $z\in \C^{2r+2q+1}$ and $v_1,v_2$
are the vectors in $\Z^{2r+2q+1}$ given by the following two rows:
\begin{equation}
\label{eq:torus_weights}
\begin{array}{ccccc|ccc|cc}
\multicolumn{5}{c|}{
\begin{smallmatrix}2q \text{ \it entries}\end{smallmatrix}
}&
\multicolumn{3}{c|}{
\begin{smallmatrix}2r-1 \text{ \it entries}\end{smallmatrix}
}&
\multicolumn{2}{c}{
\begin{smallmatrix}2 \text{ \it entries}\end{smallmatrix}
}
\\
 a_1&a_1&\ldots & a_{q}&a_{q} &-1&\ldots&-1&0&-1\\
 -1&-1&\ldots & -1&-1 &0&\ldots&0&-1&0\\
\end{array}
\end{equation}
Let $(z_1,\ldots, z_{2r+2q+1})$ be the co-ordinates on $\C^{2r+2q+1}$.
The action of $(\C^*)^2$ on $\C^{2r+2q+1}$ commutes with the action of 
$$G=SU(2)^{q}\times SU(2r),$$ where the $SU(2)$ factors act respectively on $(z_1,z_2),\ldots,(z_{2q-1},z_{2q})$, and $SU(2r)$ acts on $(z_{2q+1},\ldots,$ $z_{2q+2r-1},z_{2q+2r+1})$, note we have omitted $z_{2q+2r}$. (If we view $X$ as a projective bundle over $\CP^{2r-1}$ as in (\ref{eq:define_X}), the co-ordinates on which $SU(2r)$ acts are the homogeneous co-ordinates on the base.) 
Denote by $G^\R=SO(2)^q\times SO(2r)$ the real form of $G$.
Because all $a_i$ are odd, the action of $(-1,+1)\in (\C^*)^2$ coincides with the action of $-I\in G$.
Consequently, the action of $G$
descends to a Hamiltonian  action of $G/{\pm I}$ on $X$. Its real form $G^\R/\pm I$ preserves the real Lagrangian $L\subset X$, and we let $\gamma$  be the $S^1$-subgroup of $G^\R/\pm I$ defined as follows. This subgroup lifts to the  path from $I$ to $-I$ in $G^\R$ which is the image of the rotation 
$\left( \begin{smallmatrix}
  \cos t&\sin t\\
-\sin t&\cos t
 \end{smallmatrix}
\right)\in SO(2)$, $t\in [0,\pi]$,
under the diagonal inclusions $$SO(2)\subset SO(2)^q\times SO(2)^r\subset SO(2)^q\times SO(2r)=G^\R.$$ 
Recall that we are assuming $\chr\k=2$.

\begin{lemma}
\label{lem:hirzebruch_orbit_homology}
 The homology class of $\gamma$-orbits on $L$ is non-zero in $H_1(L;\k)$.
\end{lemma}
\begin{proof}
Indeed, $L$ is a real projective bundle over $\RP^{2r-1}$, and the orbits project to the non-trivial cycle on the base, provided $\chr\k=2$. 
\end{proof}

\begin{lemma}
\label{lem:hirzebruch_compute_seidel_gamma}
 We have $\S(\gamma)+1=y^r(x+y)^{-p}+1$ (which is the generator of $\ker\F$ from Lemma~\ref{lem:hirzebruch_ker_f_generator}).
\end{lemma}

\begin{proof}
Inside the complex group $G/\pm I$, the loop $\gamma$ is homotopic to the loop
$\gamma'$ lifting to the path from $I$ to $-I$ in $G$ which is the image of the path
$\left(\begin{smallmatrix}
  e^{it}&0\\
0&e^{-it}
 \end{smallmatrix}
\right)\in SU(2)$, $t\in [0,\pi]$, under the diagonal inclusions $$SU(2)\subset SU(2)^q\times SU(2)^r\subset SU(2)^q\times SU(2r)=G.$$  By using the action of $\C^*\subset (\C^*)^2$ corresponding to the first vector in (\ref{eq:torus_weights}), we see that $\gamma'$ descends to the same Hamiltonian loop in $X$ as the loop $\gamma''$ in $G$ which  acts on $\C^{2r+2q+1}$ as follows: 
\begin{multline*}
 (z_1,\ldots,z_{2r+2q+1})\mapsto (e^{it\frac{a_1+1}2}z_1,e^{it\frac{a_1-1}2}z_2,\ldots,e^{it\frac{a_q+1}2}z_{2q-1},e^{it\frac{a_q-1}2}z_{2q},\\
z_{2q+1},e^{-it}z_{2q+2}\ldots,e^{-it}z_{2r+2q-2},z_{2r+2q-1},z_{2r+2q},e^{-it}z_{2r+2q+1}), \quad t\in[0,2\pi].
\end{multline*}
Note that here $t$ runs through $[0,2\pi]$, hence the $\frac 1 2$-factors. Because all $a_i$ are odd, $\gamma''$ is now a closed loop in $G$, not only in $G/\pm I$. So by \cite{MDT06} its Seidel element  $\S(\gamma'')\in QH^*(X)$ can be computed as the quantum product of powers of the divisors  corresponding to the co-ordinates on $\C^{2r+2q+1}$, where the powers are the multiplicities of  rotations. Given $\chr\k=2$, and recalling that $\S(\gamma'')=\S(\gamma')=\S(\gamma)$, we get:
$$
\S(\gamma)=(x+y)^{\frac{a_1+1}2}(x+y)^{\frac{a_1-1}2}\ldots (x+y)^{\frac{a_q+1}2}(x+y)^{\frac{a_q-1}2}y^{-1}\ldots y^{-1}=(x+y)^py^{-r}.
$$
This element squares to 1 by (\ref{eq:QH_X_p_q}) (in agreement with the fact $\gamma$ has order 2 in $\pi_1(G/\pm I)\cong\Z/2$), so it also equals $y^{r}(x+y)^{-p}$, which proves Lemma~\ref{lem:hirzebruch_compute_seidel_gamma}.
\end{proof}

We conclude the proof of Theorem~\ref{th:hirzebruch}(b). By Lemmas~\ref{lem:hirzebruch_ker_f_generator} and~\ref{lem:hirzebruch_compute_seidel_gamma}, $\ker\F$ is the ideal generated by $\S(\gamma)+1$. Suppose $P\in QH^*(X)$ such that $\CO^*(P)=0\in HH^*(L,L)$. Then $\CO^0(P)=0$, so $P\in \ker\F$ by Theorem~\ref{th:toric_CO_0}. Consequently $P=(\S(\gamma)+1)*Q$, and if $P\neq 0$ then $Q\notin\ker\F$ (because otherwise we would get $P\in (\ker\F)^2=\{0\}$). Apply Theorem~\ref{th:CO_invt_lag}(b) to the product $(\S(\gamma)+1)*Q$; the left hand side of $(**)$ vanishes because $\mu^2$ is commutative on $HF^*(L,L)$ \cite{Ha13}, and the right hand side is non-trivial for some $y$ by Lemma~\ref{lem:hirzebruch_orbit_homology} and because $\CO^0(Q)\neq 0$. It follows that $\CO^*(P)\neq 0$. We have shown that $\CO^*$ is injective,  and split-generation  follows from Theorem~\ref{th:generation_crit}.
 Note that $w(L)=0$ holds for all real Lagrangians, as Maslov 2 disks come in pairs because of the action of the anti-holomorphic involution, see \cite{Ha13}.
\end{proof}

The following corollary in particular implies Proposition~\ref{prop:CO_BlP9_Inject} from the introduction.

\begin{corollary}
\label{cor:split_gen_Bl_2q-1}
 Let $X=Bl_{\CP^{2q-1}}\CP^{2r+2q-1}$, and $L\subset X$ be the real Lagrangian (diffeomorphic to $Bl_{\RP^{2q-1}}\RP^{2r+2q-1}$). Assume $\gcd(2q+1,2r-2q)\ge 2$ and that either $r$ or $q$ are odd.
Then $\CO^*\co QH^*(X)\to HF^*(L,L)$ is injective, although $\CO^0$ is not. Consequently, $L$ split-generates $\Fuk(X)_0$.
\end{corollary}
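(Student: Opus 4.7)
The plan is to recognise Corollary~\ref{cor:split_gen_Bl_2q-1} as a special case of Theorem~\ref{th:hirzebruch}(b); the only real content is to match parameters correctly, and then every substantive computation is already packaged inside that theorem.

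First I would recall that the blow-up of $\CP^n$ along a linear subspace $\CP^m$ resolves the linear projection away from that subspace, and is therefore isomorphic to the projective bundle $\P_{\CP^{n-m-1}}(\cO \oplus \cO(1)^{\oplus(m+1)})$. Specialising to $n = 2r+2q-1$ and $m = 2q-1$ yields
\[
X \;\cong\; \P_{\CP^{2r-1}}\bigl(\cO \oplus \cO(1)^{\oplus 2q}\bigr),
\]
which in the notation (\ref{eq:define_X}) is exactly $X(a_1,\ldots,a_k)$ with $k = 2q$, $n = 2r+2q-1$, and $a_1 = \cdots = a_{2q} = 1$. Restricting the projective bundle description to real points identifies the real Lagrangian $L \subset X$ with the real projective bundle $\P_{\RP^{2r-1}}(\R \oplus \R(1)^{\oplus 2q})$, i.e.\ with $Bl_{\RP^{2q-1}}\RP^{2r+2q-1}$, matching the description in the corollary.

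Second, I would check each hypothesis of Theorem~\ref{th:hirzebruch}(b) for these parameters: all $a_i = 1$ are odd; the minimal Chern number $\gcd(k+1,\,n-k+1-\sum a_i) = \gcd(2q+1,\,2r-2q)$ is at least $2$ by assumption; $n-k+1 = 2r$ is even; $k = 2q$ is even; and since every $a_i$ equals $1$, the sequence $(a_1,\ldots,a_{2q})$ trivially comes in equal pairs $(1,1),(1,1),\ldots$. Theorem~\ref{th:hirzebruch}(b) then directly delivers the three conclusions: $\CO^*\co QH^*(X) \to HH^*(L,L)$ is injective, $\CO^0\co QH^*(X) \to HF^*(L,L)$ is not injective, and $L$ split-generates $\Fuk(X)_0$.

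Since the heavy work, namely the construction of the real Hamiltonian loop $\gamma$ via the diagonal embedding $SO(2) \hookrightarrow SO(2)^q \times SO(2)^r \subset G^\R$, the calculation of its Seidel element via Theorem~\ref{th:mcduff_tolman}, the verification of the non-triviality of the orbit class in $H_1(L;\k)$ (Lemma~\ref{lem:hirzebruch_orbit_homology}), and the invocation of Theorem~\ref{th:CO_invt_lag}(b) is all internal to Theorem~\ref{th:hirzebruch}(b), there is no real obstacle in the present proof. The main point is simply the parameter identification in the first step; the extra parity assumption that $r$ or $q$ be odd singles out the particular family advertised in the statement (and covers Proposition~\ref{prop:CO_BlP9_Inject}, which is the case $q = 1$, $r = 4$).
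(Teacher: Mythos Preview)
Your proposal is correct and follows essentially the same route as the paper: identify $Bl_{\CP^{2q-1}}\CP^{2r+2q-1}$ with $X(1,\ldots,1)$ for $k=2q$, $n=2r+2q-1$ (the paper cites \cite[Proposition~11.14]{EHBook} for this), then verify the hypotheses of Theorem~\ref{th:hirzebruch}(b) and invoke Theorem~\ref{th:generation_crit}(b). Your observation that the parity condition ``$r$ or $q$ odd'' is not actually used here is accurate; that hypothesis only becomes relevant in the subsequent non-formality proposition.
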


\begin{proof}[Proof of Proposition~\ref{prop:CO_BlP9_Inject}] 
Take $X$ as in (\ref{eq:define_X}) with
 $a_1=\ldots=a_k=1$, then $X=Bl_{\CP^{k-1}}\CP^n$, see e.g.~\cite[Proposition 11.14]{EHBook}. 
 The additional hypotheses of the current corollary make sure $X$ satisfies all conditions of Theorem~\ref{th:hirzebruch}(b), which together with the split-generation criterion (Theorem~\ref{th:generation_crit}(b)) implies the corollary.
\end{proof}

In order to deduce non-displaceability results between the real Lagrangian $L$ and other Lagrangians with arbitrary obstruction numbers, we need the following lemma.

\begin{lemma}
\label{lem:split_gen_prod}
 Suppose $\chr \k=2$, $L$ is an object of $\Fuk(X)_w$ and $\CO^*\co QH^*(X)\to HH^*(L,L)$ is injective. Assuming Hypothesis~\ref{hyp:ganatra} below, $L\times L$ split-generates $\Fuk(X\times X)_0$.
\end{lemma}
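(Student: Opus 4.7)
The plan is to apply the split-generation criterion from Theorem~\ref{th:generation_crit}(b) to $L\times L\subset X\times X$ and reduce injectivity of the product closed-open map to that of the factor-wise one via a Künneth argument. First, the obstruction number is additive on products, so $w(L\times L)=2w(L)=0$ in characteristic $2$, and $L\times L$ is a legitimate object of $\Fuk(X\times X)_0$. By Theorem~\ref{th:generation_crit}(b) it then suffices to show that
$$
\CO^*_{X\times X}\co QH^*(X\times X)\to HH^*(L\times L,L\times L)
$$
is injective.

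Next I would invoke the Künneth isomorphisms on each of the three objects. On the closed side, $QH^*(X\times X)\cong QH^*(X)\otimes QH^*(X)$ by the standard product formula for quantum cohomology, since all curves in $X\times X$ split as products of curves in the two factors. On the open side, Amorim's Künneth theorem for Fukaya $A_\infty$-algebras \cite{Am14} shows that the Fukaya algebra of $L\times L$ is $A_\infty$-quasi-isomorphic to the tensor product $CF^*(L,L)\otimes CF^*(L,L)$; passing to Hochschild cohomology one then gets an algebra isomorphism $HH^*(L\times L,L\times L)\cong HH^*(L,L)\otimes HH^*(L,L)$. Combined with Ganatra's Künneth formula for the closed-open map \cite{Ga13}, these identifications fit into a commutative square in which $\CO^*_{X\times X}$ is identified with $\CO^*_X\otimes \CO^*_X$.

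Once this commutative square is in place, the conclusion is immediate: tensor products of injective $\k$-linear maps are injective, so the hypothesis that $\CO^*_X$ is injective forces $\CO^*_X\otimes \CO^*_X$ to be injective, hence $\CO^*_{X\times X}$ to be injective as well. Applying Theorem~\ref{th:generation_crit}(b) in the product setting then yields that $L\times L$ split-generates $\Fuk(X\times X)_0$.

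The main obstacle is assembling the Künneth compatibility. The cited references \cite{Ga13, Am14, AS10} are written in the exact/Liouville framework, so one must verify that their constructions carry over to the compact monotone setting with monotone Lagrangians of minimal Maslov number at least $2$. The key points are standard: the relevant moduli spaces of perturbed pseudo-holomorphic disks on a product split as fibre products of the factor-wise moduli spaces, transversality is achieved by product perturbation data perturbed generically within a contractible space of choices, and sphere bubbling is controlled because monotonicity and the minimal Maslov index assumption are preserved under products. Once this is noted, the argument goes through verbatim.
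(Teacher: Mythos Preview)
Your argument is correct in outline but takes a genuinely different route from the paper's. The paper does not try to identify $\CO^*_{X\times X}$ with $\CO^*_X\otimes\CO^*_X$. Instead it passes to the dual criterion via Sheridan: injectivity of $\CO^*$ is equivalent to the open-closed map $\OC^*\co HH_*(L,L)\to QH^*(X)$ hitting the unit. Then the paper invokes a chain-level commutative square from Ganatra \cite[Remark~11.1]{Ga13} for the \emph{open-closed} map on products (with Hochschild \emph{chains} on the top row), and simply observes that if $\OC^*$ hits $1\in QH^*(X)$ then $\OC^*\otimes\OC^*$ hits $1\otimes 1$, hence so does $\OC^*_{\text{prod}}$. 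This gives injectivity of $\CO^*$ on the product and one finishes with Theorem~\ref{th:generation_crit}(b).

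The advantage of the paper's route is that it avoids the K\"unneth isomorphism for Hochschild \emph{cohomology}, which is more delicate than for homology: $CC^*$ is a product rather than a direct sum, so the natural comparison map $CC^*(A)\otimes CC^*(B)\to CC^*(A\otimes B)$ is not a chain-level isomorphism, and one must invoke finiteness to get $HH^*(A\otimes B)\cong HH^*(A)\otimes HH^*(B)$. Moreover, the Ganatra reference you cite actually gives the square for $\OC$, not for $\CO$; so in your version you would need to either dualise his diagram carefully or supply the $\CO$-compatibility separately. None of this is fatal---the needed finiteness holds here since $CF^*(L,L)$ is finite-dimensional over a field---but the $\OC$-hits-unit argument sidesteps all of it: one only needs a single element to be in the image, which transfers trivially along the chain-level product map without any K\"unneth isomorphism on the target.
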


Note that by Lemma~\ref{lem:eigenvalue_split}, the condition of Lemma~\ref{lem:split_gen_prod} can only hold if $QH^*(X)=QH^*(X)_w$ or  $L$ is non-orientable.

\begin{proof}
First, observe that $w(L\times L)=2w(L)=0$.
 By \cite{She13}, the injectivity of $\CO^*$ is equivalent to the fact that the open-closed map $\OC^*\co HH_*(L,L)\to QH^*(X)$ hits the unit $1\in QH^*(X)$. 
 
 \begin{hypothesis}		
 	\label{hyp:ganatra}
 		There is a commutative diagram
 		$$
 		\xymatrix{
 			HH_*(L,L)\otimes HH_*(L,L)\ar[r]\ar[d]^{\OC^*\otimes \OC^*}& HH_*^{split}(L\times L,L\times L)\ar[d]^{\OC^*_\text{prod}}
 			\\
 			HF^*(X)\otimes HF^*(X)\ar[r]_{=}& HF^*(X\times X)
 		}
 		$$
 		where $\OC^*_\text{prod}$ is the open-closed map on the product, and the $HF^*$ are Hamiltonian Floer cohomologies, isomorphic to the quantum cohomologies of the corresponding spaces.
 \end{hypothesis}
 Here $HH_*^{split}(L\times L,L\times L)$ indicates that the \ai structure on $L\times L$ is computed using a split Hamiltonian perturbation and a product almost complex structure; such a choice can be made regular. 
 We expect the hypothesis to hold 
 following Ganatra \cite[Remark~11.1]{Ga13} who stated it on chain level, in the setup of the wrapped Fukaya category of an exact manifold. A slight complication is that the \ai algebra of $L\times L$ appearing in the top right corner of the diagram had to be equipped with so-called one-sided homotopy units; their presense is denoted by a tilde in \cite[Remark~11.1]{Ga13}. This does not affect the diagram on the homology level \cite[Proposition~10.10]{Ga13}, but we have not checked how this subtlety carries over to the monotone setup; therefore we leave Hypothesis~\ref{hyp:ganatra} as a conjecture. 
 
 Given Hypothesis~\ref{hyp:ganatra}, if $\OC^*$ hits the unit, then $\OC^*\otimes \OC^*$ and $\OC^*_\text{prod}$ also do. The latter fact implies that $\CO^*$ is injective on the product, and  split-generation follows from Theorem~\ref{th:generation_crit}(b).
\end{proof}

Corollary~\ref{cor:non_disp_Bl_P9} from the introduction is a particular case of the following.

\begin{corollary}
 Let $\k$ be a field of characteristic 2 and $L\subset X$ be as in Theorem~\ref{th:hirzebruch}(a) or~(b), or as in Corollary~\ref{cor:split_gen_Bl_2q-1}. Suppose $L'\subset X$ another monotone Lagrangian, perhaps equipped with a local system $\pi_1(L)\to \k^\x$, with minimal Maslov number at least 2 and such that $HF^*(L',L')\neq 0$. If $w(L')\neq 0$, assume Hypothesis~\ref{hyp:ganatra}. Then $L\cap L'\neq\emptyset$. 
\end{corollary}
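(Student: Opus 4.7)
The plan is to invoke the product trick that was already previewed after Proposition~\ref{prop:CO_BlP9_Inject}: pass to $X \times X$, where both $L \times L$ and $L' \times L'$ have obstruction number zero, and then apply split-generation together with Lemma~\ref{lem:split_gen_implies_non_disp}.

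Concretely, first I would note that in each of the three cases for $L$ listed in the hypothesis (Theorem~\ref{th:hirzebruch}(a), Theorem~\ref{th:hirzebruch}(b), and Corollary~\ref{cor:split_gen_Bl_2q-1}), the map $\CO^*\co QH^*(X) \to HH^*(L,L)$ is injective. This is exactly the hypothesis of Lemma~\ref{lem:split_gen_prod}, so I can conclude that $L \times L$ split-generates $\Fuk(X \times X)_0$.

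Next, I would turn to $L' \times L' \subset X \times X$ and check that it belongs to $\Fuk(X \times X)_0$. Since $\chr\k = 2$, we have $w(L' \times L') = 2\,w(L') = 0$, so the obstruction number is correct. The minimal Maslov number condition is inherited from $L'$, and the spin/orientability hypotheses are vacuous because we are working in characteristic $2$. The local system on $L'$ induces a product local system on $L' \times L'$. By the K\"unneth formula for monotone Lagrangian Floer cohomology,
\[
HF^*(L' \times L', L' \times L') \;\cong\; HF^*(L',L') \otimes HF^*(L',L') \;\neq\; 0,
\]
since $HF^*(L',L') \neq 0$ by assumption.

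Finally, I would apply Lemma~\ref{lem:split_gen_implies_non_disp}: since $L \times L$ split-generates $\Fuk(X \times X)_0$ and $L' \times L'$ is an object with non-vanishing self-Floer cohomology, $(L \times L) \cap (L' \times L') \neq \emptyset$, which forces $L \cap L' \neq \emptyset$. The only step requiring any real care is the K\"unneth statement and the compatibility of the product construction with local systems; both are standard, but I would cite the same references (Ganatra, Abouzaid--Smith, Amorim) used in the discussion after Proposition~\ref{prop:CO_BlP9_Inject}. The heart of the argument is really the injectivity of $\CO^*$ that has already been established; everything else is formal.
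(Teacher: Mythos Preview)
Your argument is correct and is essentially the same as the paper's. The only cosmetic difference is that the paper first handles the case $w(L')=0$ directly in $X$ (where $L$ already split-generates $\Fuk(X)_0$) and only passes to $X\times X$ when $w(L')\neq 0$, whereas you uniformly use the product; your explicit invocation of the K\"unneth formula to verify $HF^*(L'\times L',L'\times L')\neq 0$ is a detail the paper leaves implicit.
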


\begin{proof}
 If $w(L')=0$, this follows from the fact $L$ split-generates $\Fuk(X)_0$ and Lemma~\ref{lem:split_gen_implies_non_disp}. If $w(L')\neq 0$, we  have that $w(L'\times L')=2w(L')=0$, so $L'\times L'$ is an object of $\Fuk(X\times X)_0$ which is split-generated by $L\times L$ by Lemma~\ref{lem:split_gen_prod}. Then $(L\times L)\cap (L'\times L')\neq 0$ by Lemma~\ref{lem:split_gen_implies_non_disp}, and so $L\cap L'\neq 0$.
\end{proof}

\subsection{An application to non-formality}
Recall that if $\A\to \A'$ is a quasi-iso\-morph\-ism of \ai categories, it induces an isomorphism $HH^*(\A)\to HH^*(\A')$, see e.g.~Seidel \cite[(1.14)]{SeiFlux}. We will need an explicit chain-level formula for this isomorphism, which can be obtained by combining Seidel's argument with Ganatra's functoriality formulas \cite[Section~2.9]{Ga13}, and this requires a short account. We are  assuming the reader is familiar with the basic language of \ai categories from e.g.~\cite{SeiBook08, She13, Ga13}, so that we can skip some basic definitions and present the other ones rather informally. For simplicity, we are working with $\chr\k=2$ so we won't have to worry about signs, and restrict to \ai algebras rather than categories.

Recall that if $\A$ is an \ai algebra, its Hochschild cohomology $HH^*(\A)$ can be seen as Hochschild cohomology $HH^*(\A,\A)$ of $\A$ as an $\A-\A$ bimodule. If $F\co \A\to\A'$ is a quasi-isomorphism between \ai algebras, it induces quasi-isomorphisms
\begin{equation}
\label{eq:two_hh_isoms}
CC^*(\A,\A)\xrightarrow{F_*}CC^*(\A,F^*\A')\xleftarrow{F^*}CC^*(\A',\A'), 
\end{equation}
 which proves that $HH^*(\A,\A)\cong HH^*(\A',\A')$.
Chain-level formulas for the two intermediate quasi-isomorphisms, which we will now recall, were written down e.g.~by  Ganatra \cite[Section~2.9]{Ga13} (in the context of Hochschild homology, but these are easily adjusted to cohomology).

If $\B,\B'$ are two  $\A-\A$ bimodules, a morphism $G\co \B\to\B'$ is a sequence of maps $G^k\co \A^{\ot i}\ot \B\ot \A^{\ot j}\to \B'$, $i+j+1=k$, satisfying a sequence of relations which we informally write down as $$\sum_{\st} G^\st(\id^{\ot \st}\ot \mu_{\A \text{ \it or } \B}^\st\ot \id^{\ot \st})=\sum_\st\mu_{\B'}^\st(\id^{\ot \st}\ot G^\st\ot \id^{\ot \st}).$$
 Here $\st$ are positive integers which are mutually independent but are such that the total number of inputs on both sides of the equation is the same; the sum is over all such possibilities; and the structure map on the left is $\mu_\A^\st$ or $\mu_\B^\st$ depending on whether one of its arguments is in $\B$. 
 In its full form, the above relation should be written as follows:
 $$
 \footnotesize
 \begin{array}{l}  
 \displaystyle \sum_{i_1+i_2+i_3+i_4=l}G^{i_1+i_4-i_3+1}(a_1,\ldots,a_{i_1},\mu_\B^{i_3-i_1+1}(a_{i_1+1},\ldots,a_{i_2},b,
 a_{i_2+1},\ldots a_{i_3}),
 a_{i_3+1},
  \ldots,a_{i_4})
 \\
 + 
 \displaystyle \sum_{i_1+i_2+i_3+i_4=l}G^{i_1+i_4-i_2+2}(a_1,\ldots,a_{i_1},\mu_\A^{i_2-i_1}(a_{i_1+1},\ldots,a_{i_2}),a_{i_2+1},\ldots,a_{i_3},b,
 a_{i_3+1},\ldots a_{i_4})
 \\
   + 
   \displaystyle \sum_{i_1+i_2+i_3+i_4=l}G^{i_2+i_4-i_3+2}(a_1,\ldots,a_{i_1},b,a_{i_1+1},\ldots,a_{i_2},\mu_\A^{i_3-i_2}(a_{i_2+1},\ldots,a_{i_3}),
   a_{i_3+1},\ldots a_{i_4})
   \\
     =\displaystyle \sum_{i_1+i_2+i_3+i_4=l}\mu_{\B'}^{i_1+i_4-i_3+1}(a_1,\ldots,a_{i_1},G^{i_3-i_1+1}(a_{i_1+1},\ldots,a_{i_2},b,a_{i_2+1},\ldots,a_{i_3}),a_{i_3+1},\ldots,a_{i_4}),
 \end{array}
 $$
 for $a_i\in\A$ and $b\in \B$.
 We will keep the informal style of notation, in which the inputs are omitted and the valencies are replaced by $\st$, further. The induced map $G_*\co CC^*(\A,\B)\to CC^*(\A,\B')$ is defined by
\begin{equation}
 \label{eq:hh_module_morphism_pushforward}
(G_*(h))^\st=\sum\nolimits_\st G^\st(\id^{\ot\st}\ot h^\st\ot\id^{\ot\st})
\end{equation}
where $h^\st\co \A^{\ot \st}\to \B$ and $(G_*(h))^\st\co \A^{\ot \st}\to \B'$. If $G$ is a quasi-isomorphism, so is $G_*$.

If $\A,\A'$ are two \ai algebras, a morphism $F\co \A\to\A'$ is a sequence of maps $F^\st\co \A^{\ot\st}\to\A'$ such that 
$$\sum\nolimits_\st \mu_{\A'}^\st(F^\st\ot\ldots\ot F^\st)=\sum\nolimits_\st F^\st(\id^{\ot\st}\ot \mu_\A^\st\ot\id^{\ot\st}).$$
Next, if $\B$ is an $\A'-\A'$ bimodule, its two-sided pull-back $F^*\B$ is an $\A-\A$ bimodule based on the same vector space $\B$, whose structure maps are \cite[Section~2.8]{Ga13}
\begin{equation}
\label{eq:hh_module_pullback}
 \mu^\st_{F^*\B}=\sum\nolimits_\st \mu_\B^\st(F^\st\ot \ldots\ot F^\st\ot\id_\B\ot F^\st\ot\ldots\ot F^\st)
\end{equation}
There is also a morphism $F^*\co CC^*(\A',\B)\to CC^*(\A,F^*\B)$ defined by
\begin{equation}
\label{eq:hh_algebra_morphism_pullback}
 (F^*(h))^\st=\sum\nolimits_\st h^\st(F^\st\ot \ldots\ot F^\st)
\end{equation}
where $h^\st\co (\A')^{\ot\st}\to\B$ and $(F^*(h))^\st\co \A^{\ot\st}\to\B$. The total number of inputs here can be zero, and $F^*(h)^0=h^0$.
If $F$ is a quasi-isomorphism, so is $F^*$.

If, again, $F\co \A\to \A'$ is a morphism of \ai algebras, let $F^*\A'$ be the $\A-\A$ bimodule which is the pull-back of $\A'$ seen as an $\A'-\A'$ bimodule. 
\begin{lemma}
\label{lem:morphism_alg_and_mod}
The same sequence of maps $F^\st\co \A^{\otimes \st}\to\A'$ provides a morphism of $\A-\A$ bimodules $\A\to F^*\A'$, also denoted by $F$. 
\end{lemma}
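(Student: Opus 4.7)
The plan is to verify that the $A_\infty$-algebra morphism equation satisfied by $F\co\A\to\A'$ becomes, after unpacking the definitions, literally the same as the $A_\infty$-bimodule morphism equation for $F\co\A\to F^*\A'$. No real computation is needed: the content of the lemma is a definitional matching of terms.

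First I would set up the data. A bimodule morphism $G\co \A\to F^*\A'$ consists of components $G^k\co\A^{\otimes i}\otimes\A\otimes\A^{\otimes j}\to F^*\A'=\A'$ with $i+j+1=k$; since the bimodule slot of the diagonal bimodule $\A$ is just a copy of $\A$ itself, such $G^k$ is nothing but a map $\A^{\otimes k}\to\A'$. Declaring $G^k\coloneqq F^k$ is therefore unambiguous, and the task reduces to checking the bimodule relation. Using the diagonal bimodule structure $\mu_\A$ on $\A$, the LHS of the bimodule morphism equation,
$$
\sum\nolimits_\st G^\st(\id^{\otimes\st}\otimes\mu_{\A\text{ or }\B}^\st\otimes\id^{\otimes\st}),
$$
becomes $\sum_\st F^\st(\id^{\otimes\st}\otimes\mu_\A^\st\otimes\id^{\otimes\st})$, with the inner $\mu_\A$ summed over all positions (whether or not it touches the bimodule slot, since that slot is filled by $\A$ too). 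This is exactly the LHS of the usual $A_\infty$-morphism relation for $F$.

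Next I would expand the RHS. By the definition \eqref{eq:hh_module_pullback} of the pulled-back bimodule structure $\mu_{F^*\A'}^\st$, the expression
$$
\sum\nolimits_\st \mu_{F^*\A'}^\st(\id^{\otimes\st}\otimes G^\st\otimes\id^{\otimes\st})
$$
unfolds as $\sum\mu_{\A'}^\st(F^\st\otimes\cdots\otimes F^\st\otimes F^\st\otimes F^\st\otimes\cdots\otimes F^\st)$, where the middle $F^\st$ is the $G^\st=F^\st$ applied to the block containing the bimodule slot, and the remaining $F^\st$ come from the $F$'s built into $\mu_{F^*\A'}$ acting on the outside inputs. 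This is precisely the RHS of the $A_\infty$-morphism relation for $F\co\A\to\A'$.

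Thus the bimodule morphism equation and the $A_\infty$-algebra morphism equation are identical under this identification, and the latter holds by hypothesis; since we are in $\chr\k=2$ there are no signs to reconcile. The only potential obstacle is purely bookkeeping --- ensuring that every partitioning of inputs on one side corresponds, uniquely and with the same coefficient, to a partitioning on the other --- but this is immediate from writing both sides in the informal $\st$-notation of the excerpt. Hence the assignment $k\mapsto F^k$ defines a bimodule morphism $\A\to F^*\A'$, proving the lemma.
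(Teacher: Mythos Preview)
Your proposal is correct and follows essentially the same approach as the paper's proof: both unpack the bimodule morphism equation, observe that the left-hand side coincides with that of the $A_\infty$-morphism relation (since the diagonal bimodule slot is just $\A$), and rewrite the right-hand side using the pulled-back bimodule structure \eqref{eq:hh_module_pullback} to see that the $\id_{\A'}$-slot is filled by $G^\st=F^\st$, yielding the right-hand side of the $A_\infty$-morphism relation. Your treatment is slightly more explicit about the bookkeeping of input partitions, but the argument is the same.
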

\begin{proof}
 We must check that
$\sum_\st F^\st(\id^{\ot\st}\ot\mu_\A^\st\ot\id^{\ot\st})=
\sum_\st \mu_{F^*\A}^\st(\id^{\ot\st}\ot F^\st\ot\id^{\ot\st})$. If we apply formula (\ref{eq:hh_module_pullback}) to rewrite the right hand sum, the unique $\id$-factor in (\ref{eq:hh_module_pullback}), which in our case is $\id_{\A'}$, gets applied to the $F^\st$-factor. So our right hand sum equals
$\sum_\st \mu_{\A'}^\st(F^\st\ot\ldots \ot F^\st\ot \ldots \ot F^\st)$ which is exactly the condition that $F$ is a morphism of \ai algebras $\A\to\A'$.
\end{proof}
This lemma explains the precise meaning of (\ref{eq:two_hh_isoms}): if $F\co \A\to \A'$ is a morphism of \ai algebras, then the first map $F_*$ from (\ref{eq:two_hh_isoms}) is the push-forward of $F$ considered as a morphism of modules $\A\to F^*\A'$, given by formula (\ref{eq:hh_module_morphism_pushforward}). The second map in (\ref{eq:two_hh_isoms}) is the pull-back as in (\ref{eq:hh_algebra_morphism_pullback}).

Next, recall that a general property of quasi-isomorphisms between \ai algebras (bimodues, etc.) is that they have quasi-inverses \cite[Chapter~1]{SeiBook08}.
If $\F\co \A\to\A'$ is a quasi-isomorphism, then the $\A-\A$ bimodule morphism $F$ from Lemma~\ref{lem:morphism_alg_and_mod} is also a quasi-isomorphism, hence there is an $\A-\A$ bimodule quasi-isomorphism $G\co F^*\A'\to\A$ which is a quasi-inverse of $F$, so we have quasi-isomorphisms:
\begin{equation}
\label{eq:hh_G_and_F}
CC^*(\A,\A)\xleftarrow{G_*}CC^*(\A,F^*\A')\xleftarrow{F^*} CC^*(\A',\A').
\end{equation}
Their composition acts on Hochschild cochains by:
$$
(G_*F^*(h))^\st=\sum\nolimits_\st G^\st(\id^{\ot \st}\ot h^\st(F^\st\ot\ldots\ot F^\st)\ot \id^{\ot\st})
$$
where $h^\st\co (\A')^{\ot \st}\to\A'$ and $(G_*F^*(h))^\st\co \A^{\ot\st}\to\A$.
In particular, $(G_*F^*(h))^0=G^1(h^0)$, and 
if  $h^0=0\in\A'$ then
\begin{equation}
\label{eq:hh_comp_G_F_1}
(G_*F^*(h))^1(u)=G^1(h^1(F^1(u))),\quad u\in \A'.
\end{equation}
Note that $G^1\co \A'\to\A$, $F^1\co \A\to\A'$ are chain maps with respect to $\mu^1_\A$, $\mu^1_{\A'}$ and are cohomology inverses of each other.

Assume $L\subset X$ is a Lagrangian  preserved by a Hamiltonian loop $\gamma$ which together satisfy the conditions of Theorem~\ref{th:CO_invt_lag} (those conditions which are common to all parts of the theorem).
Assume the \ai algebra $CF^*(L,L)$ is formal, i.e.~there is an \ai quasi-isomorphism $F\co HF^*(L,L)\to CF^*(L,L)$. Denote 
\begin{equation}
\label{eq:h_formal}
h\coloneqq (G_*F^*)(\CO^*(\S(\gamma))-1) \in CC^*(HF^*(L,L),HF^*(L,L)),
\end{equation}
where $G$ is a quasi-inverse of $F$, and $G_*,F^*$ are as in (\ref{eq:hh_G_and_F}). So $h$ is a Hochschild cochain for the accociative algebra $HF^*(L,L)$. Then by Corollary~\ref{cor:compute_CO_0}, $h^0=0$,  Proposition~\ref{prop:compute_CO_1} and equation (\ref{eq:hh_comp_G_F_1}),
$$h^1(x)=\rho(l)\cdot \langle \Psi(F^1(x)),l\rangle\cdot G^1(1_L).$$ 
(We have dropped the extra sign, working in characteristic~two.)
Let us additionally assume that $L$ is \emph{wide} \cite[Definition~1.2.1]{BC09A}, i.e.~there is a vector space isomorphism between $H^*(L)$ and $HF^*(L,L)$, and that $L$ admits a perfect Morse function. These conditions enable us to identify $CF^*(L,L)\cong HF^*(L,L)$ as vector spaces.
Because $G^1$ is cohomologically unital, $G^1(1_L)=1_L\in HF^*(L,L)$, so
\begin{equation}
\label{eq:h_1_formal}
h^1(x)=\rho(l)\cdot \langle \Psi(F^1(x)),l\rangle\cdot 1_L\in HF^*(L,L).
\end{equation}
Under our identifications,  $\Psi$ becomes an isomorphism between the vector spaces below, and $F^1$ can be considered as algebra isomorphism from $HF^*(L,L)$ to itself:
\begin{equation}
\label{eq:F_1_and_Psi}
 HF^*(L,L)\xrightarrow[\text{\it algebra iso.}]{F^1} HF^*(L,L)\xrightarrow[\text{\it v.~space iso.}]{\Psi} H^*(L).
\end{equation}

We now turn the discussion to Hochschild cohomology of monic algebras. Let $f(u)\in \k[u]$ be a polynomial and $A\coloneqq\k[u]/(f)$ be the quotient algebra; it is called a monic algebra. This is an algebra in the ordinary associative sense; we can  consider it as an \ai algebra by equipping it with trivial higher structure maps. The Hochschild cohomology algebra $HH^*(A)$ was computed by Holm~\cite{Holm01}. Recall that Hochschild cohomology of ungraded associative algebras is $\Z$-graded (unlike Hochschild cohomology of non-$\Z$-graded \ai algebras): cochains $A^{\otimes k}\to A$ are said to have degree $k$, and the differential has degree 1.
By \cite[Proposition~2.2]{Holm01}, 
$$HH^k(A)=
\begin{cases}
 A,& \mbox{if } k=0\\
Ann_A(f')&\mbox{if } k>0 \mbox{ is odd}\\
A/(f')&\mbox{if } k>0 \mbox{ is even}.
\end{cases}
$$
Although we already know the answer, let us compute $HH^1(A)$ explicitly, as this will be helpful later.

\begin{lemma}
For an $h\co A\to A$ which is a Hochschild cocycle in $CC^1(A)$,  we must necessarily have
\begin{equation}
\label{eq:HH_1_comm}
h(u^m)=amu^{m-1}
\end{equation}
for some fixed $a\in A$. Note that $a=h(u)$.
\end{lemma}
\begin{proof}
Let us compute the Hochschild differential $\bd h\co A\otimes A\to A$ on the two elements: $u$ and $u^{m-1}$, for some $m\in\N$. Because $\bd h$ vanishes by assumption, we get:
$$
0=(\bd h)(u, u^{m-1})=uh(u^{m-1})+u^{m-1}h(u)+h(u^m).
$$
The desired formula follows by induction on $m$.
\end{proof}	

So any cocycle $h\in CC^1(A)$ is completely determined by a single element $a=h(u)\in A$, and $h$ must meet an additional condition that $h(f(u))=h(0)=0$, which is equivalent to $a\in Ann_A(f')$. As the differential $CC^0(A)\to CC^1(A)$ vanishes, we get an isomorphism $HH^1(A)\to Ann_A(f')$, $h\mapsto h(u)$.

We will further assume that $\chr\k=2$ and $f'=0$. The latter condition means that $f$ is a sum of even powers of $u$. Denote by 
$$
\psi\co HH^1(A)\to A
$$
the isomorphism $\phi(h)=h(u)$ from above. Note that if $s(u)\in A$ is an arbitrary element given by a polynomial with derivative $s'(u)$, then by (\ref{eq:HH_1_comm}) we get
\begin{equation}
\label{eq:hh_psi_formula} 
\psi(h)=s'(u)\cdot h(s(u)).
\end{equation}
For $k> 1$, we also have isomorphisms $\psi\co HH^k(A)\to A$, all of which we denote by the same letter by abusing notation; we will not need an explicit formula for these isomorphisms when $k>1$.

Moreover,~\cite[Lemma~5.1]{Holm01} computes the Yoneda product on $HH^*(A)$. In particular, given  $h_1,h_2\in HH^1(A)$, their Yoneda product $h_1\st h_2$ is determined by
\begin{equation}
\label{eq:product_holm}
\psi(h_1\st h_2)=\psi(h_1)\cdot \psi(h_2)\sum\nolimits_{j\, odd} f_{2j}u^{2j-2}\in A,
\end{equation}
where $f=\sum_j f_ju^j$, $f_j\in \k$.

The two strands of discussion can be combined in the following theorem.

\begin{theorem}
\label{th:invt_non_formal}
 Let $\k$ be a field of characteristic 2, $L\subset X$  a Lagrangian preserved by a Hamiltonian loop $\gamma$ which together satisfy the conditions of Theorem~\ref{th:CO_invt_lag} (those conditions which are common to all parts of the theorem). 
Assume there is an algebra isomorphism $HF^*(L,L)\cong \k[u]/(f)$ where $f(u)=\sum_{j\ge 0}f_ju^j$ is a polynomial, and also that $L$ is wide and admits a perfect Morse function, so that we can identify the vector spaces $HF^*(L,L)\cong CF^*(L,L)$, and $\Psi\co HF^*(L,L)\to H^*(L)$ becomes an isomorphism of vector spaces. Further, assume:

\begin{itemize}
 \item $f'=0$, and  $\sum_{j\, odd}f_{2j}u^{2j-2}$ is invertible in $\k[u]/(f)$;
\item $\langle \Psi(r(u)),l\rangle =1$ for an element $r(u)\in \k[u]/(f)\cong HF^*(L,L)$ which generates $HF^*(L,L)$ as an algebra;
\item $\S(\gamma)^2=1\in QH^*(X)$.
\end{itemize}
Then the Fukaya \ai algebra of $L$ is not formal over $\k$.
\end{theorem}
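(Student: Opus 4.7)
The plan is to derive a contradiction from formality by comparing two incompatible consequences of the hypothesis $\S(\gamma)^2=1$. Assume, for contradiction, that there is an $A_\infty$ quasi-isomorphism $F\co A\to CF^*(L,L)$ with cohomology inverse $G$, where $A \coloneqq \k[u]/(f)$ is endowed with trivial higher products (the formal model). Form the Hochschild cocycle $h \coloneqq (G_*F^*)(\CO^*(\S(\gamma)-1))\in CC^*(A,A)$ as in (\ref{eq:h_formal}). Since $\CO^*$ is a ring map (with respect to Yoneda product on $HH^*$) and $G_*F^*$ induces a ring isomorphism on Hochschild cohomology, the class $[h]\in HH^*(A)$ represents $\CO^*(\S(\gamma)-1)$ transported under formality; and since $(\S(\gamma)-1)^2=\S(\gamma)^2+1=0$ in characteristic $2$, we have $[h]\st[h]=0\in HH^*(A)$.

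The key structural observation is that, since $A$ is strictly associative, the Hochschild differential on $CC^*(A,A)$ preserves the Hochschild (length) grading, so $h=\sum_k h^k$ decomposes into separately cocyclic components. In particular $h^1\in CC^1(A,A)$ is a derivation of $A$; it is computed by formula (\ref{eq:h_1_formal}), which after the identification $CF^*(L,L)\cong A$ reads $h^1(x)=\rho(l)\cdot \langle\Psi(F^1(x)),l\rangle\cdot 1_A$. Extract the degree-$2$ component of $[h]\st[h]=0$: using that cross-terms $2[h^0]\cdot[h^2]$ vanish in characteristic $2$, we obtain $[h^1]\st[h^1]=0\in HH^2(A)$. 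Applying Holm's product formula (\ref{eq:product_holm}) together with invertibility of $\sum_{j\,odd}f_{2j}u^{2j-2}$, we deduce $\psi([h^1])^2=0$ in $A$; and under the identification $\psi\co HH^1(A)\cong A$ (valid since $f'=0$), this says $h^1(u)^2=0$.

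Independently, set $x_0 \coloneqq (F^1)^{-1}(r(u))\in A$, which is well-defined because $F^1$ is an algebra automorphism of $A$. The hypothesis $\langle\Psi(r(u)),l\rangle=1$ combined with (\ref{eq:h_1_formal}) gives $h^1(x_0)=\rho(l)\cdot 1=\rho(l)\in A$, which is a unit of $A$ (since $\rho(l)\in\k^\times$). Writing $x_0=p(u)$ as a polynomial, the Leibniz rule for the derivation $h^1$ yields $p'(u)\cdot h^1(u)=\rho(l)$, so $h^1(u)$ is invertible in $A$. This contradicts $h^1(u)^2=0$, proving non-formality.

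The main subtlety is the bookkeeping around the formality transport: one has to verify that in the formal model the chain-level formula (\ref{eq:h_1_formal}) directly represents the cohomology class of $h^1$ in $HH^1(A)$ (using that $CC^0(A)\to CC^1(A)$ vanishes for commutative $A$) and that the degree decomposition of Hochschild cocycles behaves as claimed. Beyond that, the argument rigidly combines Proposition~\ref{prop:compute_CO_1}, Holm's computation of $HH^*(\k[u]/(f))$, and the Leibniz rule for derivations to produce the desired contradiction.
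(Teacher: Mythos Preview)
Your argument is correct and follows the same strategy as the paper: transport $\CO^*(\S(\gamma)-1)$ to a Hochschild cocycle $h$ for the associative algebra $A=\k[u]/(f)$, use $(\S(\gamma)-1)^2=0$ together with Holm's product formula to force $\psi(h^1)^2=0$, and derive a contradiction from the value of $h^1$ on a preimage of $r(u)$.

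The only real difference is in the endgame. You observe directly that $h^1(u)\cdot p'(u)=\rho(l)$ makes $h^1(u)$ a unit, which is incompatible with $h^1(u)^2=0$. The paper instead writes $\psi(h^1)=\rho(l)\cdot s'(u)$ and, after concluding $(s'(u))^2=0$, runs an extra argument (writing $s'(u)=t(u)^2$, using that $\ker\F$ is prime) to get $s'(u)=0$, and then finishes by noting that $s(u)$ would lie in the proper subalgebra generated by $u^2$, contradicting that $s(u)$ must generate $A$. Your contradiction is more direct; you also make explicit the extraction of the degree-$2$ piece of $[h]\st[h]$ (using graded commutativity in characteristic~$2$), which the paper leaves implicit when invoking~(\ref{eq:product_holm}).
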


\begin{proof} 
Supposing $CF^*(L,L)$ is formal, let $h$ be as in (\ref{eq:h_formal}) and $F^1$ be as in (\ref{eq:F_1_and_Psi}). Then there exists $s(u)\in HF^*(L,L)$ (we  view this element as a polynomial in $\k[u]/(f)$) such that $F^1(s(u))=r(u)$. Then by (\ref{eq:h_1_formal}) $h^1(s(u))=\rho(l)\cdot 1\in HF^*(L,L)$, so by (\ref{eq:hh_psi_formula}), $$\psi(h^1)=\rho(l)\cdot s'(u)\in HF^*(L,L).$$ 
Further, note that $h\star h=0$ because $(\S(\gamma)+1)^2=0$, so (\ref{eq:product_holm}) yields
$$\rho(l)^2\cdot (s'(u))^2\sum_{j\, odd}f_{2j}u^{2j-2}=0\in HF^*(L,L).$$  By hypothesis, this implies $(s'(u))^2=0$,
so $s'(u)\in \ker \F$ where $\F\co \k[u]/(f)\to \k[u]/(f)$ is the Frobenius endomorphism. In general, over $\chr\k=2$
it is always true that $s'(u)$ is a sum of even powers of $u$, so $s'(u)$
is a square of another polynomial: $s'(u)=(t(u))^2$. Then $t(u)^2\in\ker\F$, which implies $t(u)\in\ker\F$ because $\ker\F$, being an ideal in $\k[u]/(f)$, is necessarily prime. Consequently, $s'(u)=0$.
So $s(u)$ is a sum of even powers of $u$,
hence the subalgebra generated by $s(u)$ lies in the subalgebra of $\k[u]/(f)$ generated by $u^2$, which
is smaller than the whole $\k[u]/(f)$: for example, it does not contain the element $u$. (Recall that $f$ is also a sum of even powers of $u$.) On the other hand, we know that $F^1$ is an algebra isomorphism, $F^1(s(u))=r(u)$ and $r(u)$ generates the whole $HF^*(L,L)$ by hypothesis, so $s(u)$ should also generate $HF^*(L,L)$, which is a contradiction. 
\end{proof}

\begin{proof}[Proof of Proposition~\ref{prop:RPn_not_formal}]
Take the real loop $\gamma$ preserving $\RP^{4n+1}$ defined in the proof of Proposition~\ref{prop:CO_RPn_Inject} (see Section~\ref{sec:intro}), and denote  $L=\RP^{4n+1}$, $X=\CP^{4n+1}$.
Recall that, if $x\in H^2(X)$ is the generator, then $QH^*(X)\cong \k[x]/(x^{4n+2}+1)$ and $\S(\gamma)=x^{2n+1}$, so $\S(\gamma)^2=1$. Also recall that $l\in H_1(L)\cong \k$ is non-zero.
By Theorem~\ref{th:haug}, $HF^*(L,L)\cong \k[u]/(u^{4n+2}+1)$ where $u\in CF^1(L,L)\cong \k$, and we have $\langle \Psi(u),l\rangle=1$.
Now apply Theorem~\ref{th:invt_non_formal} taking $r(u)=u$ to conclude the proof.
\end{proof}

\begin{proposition}
 Let $X=Bl_{\CP^{2q-1}}\CP^{2r+2q-1}$, $L\subset X$ be the real Lagrangian (diffeomorphic to $Bl_{\RP^{2q-1}}\RP^{2r+2q-1}$). Assume that $\gcd(2q+1,2r-2q)\ge 2$ and that either $r$ or $q$ are odd.
Then the \ai algebra of $L$ is not formal  over a characteristic 2 field.
\end{proposition}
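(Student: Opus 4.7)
The plan is to apply Theorem~\ref{th:invt_non_formal} to $L=Bl_{\RP^{2q-1}}\RP^{2r+2q-1}$ and the real Hamiltonian loop $\gamma$ constructed in the proof of Theorem~\ref{th:hirzebruch}(b), specialised to $a_1=\ldots=a_{2q}=1$. In the notation of that proof this gives $p=q$, and the common hypotheses of Theorem~\ref{th:CO_invt_lag} were already verified when proving Corollary~\ref{cor:split_gen_Bl_2q-1}. Moreover, $\S(\gamma)=y^r(x+y)^{-q}$ by Lemma~\ref{lem:hirzebruch_compute_seidel_gamma}, and this squares to~$1$ in $QH^*(X)$ by~(\ref{eq:QH_X_p_q}), so the third bullet of Theorem~\ref{th:invt_non_formal} holds.

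Next I would identify the ring $HF^*(L,L)$ and check the first two bullets. Combining Theorem~\ref{th:haug} with~(\ref{eq:QH_Hirzebruch_poly}) gives
\[
HF^*(L,L) \;\cong\; QH^{2*}(X) \;\cong\; \k[u]/V(u)^2, \qquad V(u)=u^{2rq+r}+u^{2rq+q}+1,
\]
so $f(u)=V(u)^2=u^{4rq+2r}+u^{4rq+2q}+1$ in characteristic~$2$. All exponents of $f$ are even, hence $f'=0$. Wideness of $L$ and the existence of a perfect Morse function are both supplied by the construction of Haug~\cite{Ha13}. Writing $S(u)\coloneqq\sum_{j\,\mathrm{odd}} f_{2j}u^{2j-2}$, a nonzero contribution can only come from $j=2rq+r$ or $j=2rq+q$, which are odd precisely when $r$ resp.~$q$ is odd. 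Under the hypothesis that at least one of $r,q$ is odd, a short case-by-case analysis reduces the invertibility of $S(u)$ modulo $f$ to the invertibility of $u$ and, when both are odd, of $u^{|r-q|}+1$. Invertibility of $u$ follows from $f(0)=1$; invertibility of $u^{|r-q|}+1$ follows from the identity $V(u)-1=u^{2rq+\min(r,q)}(u^{|r-q|}+1)$, which forces $\gcd(V,u^{|r-q|}+1)=1$ and hence $\gcd(f,u^{|r-q|}+1)=1$.

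For the remaining bullet I would take $P(u)\coloneqq u\in \k[u]/(f)$, which generates the whole algebra. Under~(\ref{eq:QH_Hirzebruch_poly}), $u$ corresponds to the divisor class $y\in QH^2(X)$, and under the Duistermaat isomorphism~$\D$ the class $y$ maps to the Poincar\'e dual of the real hypersurface $L\cap D_y\subset L$, where $D_y$ is the toric divisor associated to $y$. Geometrically, $L$ is a real projective bundle over $\RP^{2r-1}$ and $L\cap D_y$ is the preimage of a hyperplane $\RP^{2r-2}\subset \RP^{2r-1}$; in particular its dual class in $H^1(L;\k)$ pulls back from the generator of $H^1(\RP^{2r-1};\k)$. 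Since the orbit $l$ of $\gamma$ projects to the generator of $H_1(\RP^{2r-1};\k)$ by Lemma~\ref{lem:hirzebruch_orbit_homology}, we obtain $\langle \Psi(u),l\rangle\neq 0$, and rescaling $u$ by a nonzero scalar (which preserves all previously checked conditions) arranges $\langle \Psi(u),l\rangle=1$. With all four hypotheses of Theorem~\ref{th:invt_non_formal} in hand, the $A_\infty$ algebra of $L$ is not formal.

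The main obstacle I anticipate is the invertibility check for $S(u)$ in the subcase where both $r$ and $q$ are odd, and especially the degenerate boundary case $r=q$ (allowed by $\gcd(2q+1,0)=2q+1\ge 3$), in which the formula~(\ref{eq:QH_Hirzebruch_poly}) collapses. Handling that case may require a direct computation of the ring $HF^*(L,L)$ via Theorem~\ref{th:haug} and the cohomology of the relevant real projective bundle over $\RP^{2r-1}$, rather than blindly applying the presentation obtained from~(\ref{eq:QH_Hirzebruch_poly}).
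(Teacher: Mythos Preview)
Your approach coincides with the paper's: both apply Theorem~\ref{th:invt_non_formal} using the loop $\gamma$ from Theorem~\ref{th:hirzebruch}(b) specialised to $a_1=\ldots=a_{2q}=1$ (so $p=q$, $g=\gcd(2rq,q)=q$, hence $p/g=1$), and take the generator $r(u)=u^{p/g}=u$, which under $\phi$ and Haug's $\D$ corresponds to $[L\cap D_y]^*\in H^1(L)$. Your case analysis for the invertibility of $\sum_{j\ \mathrm{odd}}f_{2j}u^{2j-2}$ is correct and makes explicit what the paper leaves as ``easy to check''.

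Two minor remarks. First, the rescaling of $u$ is unnecessary: both $[L\cap D_y]^*$ and $l$ come from $\mathbb{F}_2$-classes, so the pairing $\langle\Psi(u),l\rangle$ already takes values in $\mathbb{F}_2\subset\k$, and nonzero means $1$. Second, your flagged obstacle $r=q$ does not arise. The Fano condition on $X$, implicit throughout Section~\ref{sec:real_toric} and required for Haug's results underpinning Theorem~\ref{th:invt_non_formal}, reads $\sum a_i\le n-k-1$, i.e.\ $2q\le 2r-2$; hence $q<r$ is forced, and the presentation~(\ref{eq:QH_Hirzebruch_poly}) never collapses.
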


\begin{proof} 
We recall that all real Lagrangians are wide by Theorem~\ref{th:haug} and admit a perfect Morse function by~\cite{Ha13}.
 The fact that $\gcd(2q+1,2r-2q)\ge 2$ means we are in the situation of Theorem~\ref{th:hirzebruch}(b), with $k=2q=2p$, $a_1=\ldots=a_k=1$. We have already seen (\ref{eq:QH_Hirzebruch_poly}) that $HF^*(L,L)\cong \k[u]/(f)$ with $f'=0$, and it is easy to check that $\sum_{j\ odd}f_{2j}u^{2j-2}$ is invertible provided that either $r$ or $q$ is odd (otherwise this element would vanish). Moreover, via (\ref{eq:phi_acting_on_u}) and Haug's isomorphism (Theorem~\ref{th:haug}), $u^{p/g}$ corresponds to the generator of $CF^1(L,L)\cong \k^2$ such that $\langle\Psi(u),l\rangle=1$. Now apply Theorem~\ref{th:invt_non_formal} taking $r(u)=u^{p/g}$.
\end{proof}

\subsection{Non-formality of the equator on the sphere}
Proposition~\ref{prop:RPn_not_formal}  says in particular that the \ai algebra of an equatorial circle on $S^2$ is not formal over $\chr \k=2$. This is an especially simple case which can be verified by hand, and it is worth discussing it in more detail.
Let $L_1\subset S^2$ be a fixed equator, and $L_2,L_3,\ldots$ be a sequence of its small Hamiltonian perturbations; assume $|L_i\cap L_j|=2$ for each
$i,j$. Then 
$CF^0(L_i,L_j)\cong \k$ is generated by an element which we denote by $1$ (this is the cohomological unit), and $CF^1(L_i,L_j)\cong \k$ is generated by an element which we denote by $u$ (we use the same letter for all $i,j$). Of the two intersection points $L_i\cap L_j$, the point $u$ is the one at which $T_uL_j$ is obtained from $T_uL_i$ by a small positive rotation with respect to the $\omega$-induced orientation on $S^2$. 
Consider the \ai structure maps between the consequtive Lagrangians:
\begin{equation}
\label{eq:mu_k_order}
\mu^k\co  CF^*(L_{k},L_{k+1})\otimes\ldots\otimes CF^*(L_1,L_2)\to CF^*(L_1,L_{k+1}) 
\end{equation}
given by counting immersed polygons as in~\cite{SeiBook08}.
These give a model for the \ai algebra of $L$, because all the $L_i$ differ small perturbations and we can canonically identify the spaces $CF^*(L_i,L_{i+1})$ with each other. The  \ai maps will depend on the particular arrangement of the $L_i$, although up to quasi-isomorphism they give the same \ai algebra.

\begin{remark}
	The fact the \ai algebra of $L$ defined using the count of polygons is quasi-isomorphic to the one defined using Hamiltonian perturbations seems not to have been written down in detail but is widely accepted. An approach is sketched in \cite[Remark~7.2]{Sei11}, and also performed in \cite{She11} in a slightly different setup.
\end{remark}

Let us compute some of the \ai structure maps using a specific choice of the $L_i$. Fix a Hamiltonian $H$ whose flow is the rotation of $S^2\subset \R^3$ around an axis which is not orthogonal to the plane intersecting $S^2$ along the equator $L_1$. Let $L_2,L_3,\ldots$ be obtained from $L_1$ by applying that rotation by small but consequtively increasing angles, i.e.~$L_i$ are time-$t_i$ push-offs of $L_1$ under the flow of $H$, $0=t_1<t_2<t_3\ldots$. The first four resulting circles $L_i$ are represented in Figure~\ref{fig:Circle_A_infty}(a). The pairwise intersections of the $L_i$ are contained in two opposite patches of the sphere; those patches are shown in the top and bottom of Figure~\ref{fig:Circle_A_infty}(a) together with the $L_i$ on them, which are depicted by straight lines. Both patches are drawn as if we look at them from the same point ``above'' the sphere, so that the positive rotation (with respect to the orientation on $S^2$) is counter-clockwise on the upper patch and clockwise on the lower patch. 
For this particular choice of  perturbations, and for each $i<j$, all degree-one points $u\in CF^*(L_i,L_j)$ are located on the upper patch, and all points $1\in CF^*(L_i,L_j)$ are on the lower patch.

\begin{figure}[h]
 \includegraphics{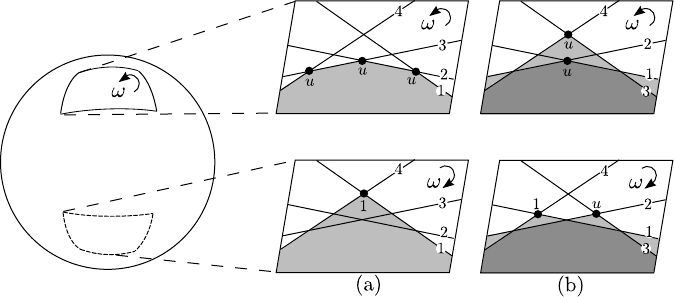}
\caption{Two different configurations (a) and (b) consisting of four small Hamiltonian push-offs $L_1,\ldots,L_4$ (marked by numbers) of an equatorial circle on $S^2$.
The image of the disk contributing to $\mu^3(u,u,u)=1$ is shaded.
}
\label{fig:Circle_A_infty}
\end{figure}

We claim that in this model we get:
$$
\label{eq:mu_k_S1}
\mu^3(u,u,u)=1,\ \mu^3(u,u,1)=0,\ \mu^3(u,1,u)=0, \ \mu^3(1,u,u)=0.
$$
 For grading reasons, $\mu^k(u,\ldots,u)$ is a multiple of $1$, and is determined by counting Maslov index 2 disks. There is a unique such disk; for $k=3$ it is shown in gray shade in Figure~\ref{fig:Circle_A_infty}(a) on the two patches; away from the patches this disc is just a strip between $L_1$ and $L_4$. 
Also for grading reasons, the only other products which can possible be non-trivial are $\mu^k(u,\ldots,u,1,u\ldots,u)\in\{0,u\}$, where exactly one input is $1$. It possible to check that these vanish for our configuration of the circles $L_i$, at least when $k=3$.
Now note that 
$$
\mu^2(1+u,1+u)=0,\quad \mu^3(1+u,1+u,1+u)=1.
$$
The latter equality exhibits a non-trivial Massey product, seen as a well-defined element of $$\k[u]/(1+u)\cong\k.$$ 
An explanation where the Massey products generally belong to is found in \cite[Remark~1.2]{SeiBook08} and explains the quotient by $1+u$ above. 
The presence of a non-trivial Massey product is invariant under quasi-isomorphisms. To see this, recall that the analogous fact for dg algebras is easy, and any \ai algebra is quasi-isomorphic to a dg algebra. Moreover, the Massey products for the \ai and dg models satisfy a simple relation \cite[Theorem~3.1 and Corollary~A.5]{LPWZ09}, in particular, if triple Massey products of an \ai algebra are non-trivial, they remain non-trivial for its dg-model.
This gives us an alternative proof of the fact that
the \ai algebra of the equator on $S^2$ is not formal.

For any other arrangement of the $L_i$,
we will necessarily have $\mu^3(1+u,1+u,1+u)=1$ modulo $1+u$   
because of invariance of Massey products,
meaning that $$\mu^3(1+u,1+u,1+u)\in\{1,u\}.$$
For example, another possible configuration of $L_1,\ldots,L_4$ is shown in Figure~\ref{fig:Circle_A_infty}(b); it is simply obtained from the earlier configuration by changing the ordering of the $L_i$. In this new model, the maps $\mu^k$ from (\ref{eq:mu_k_order}) are now:
$$
\mu^3(u,u,u)=1,\ \mu^3(u,u,1)=0,\ \mu^3(u,1,u)=u, \ \mu^3(1,u,u)=u.
$$
The unique disk contributing to $\mu^3(u,u,u)$ is shown in Figure~\ref{fig:Circle_A_infty}(b) by gray shade. It is an immersed disk, and the domain over which it self-overlaps has darker shade. Note that in this model, the degree-one generators $u\in CF^1(L_1,L_2)$, $CF^1(L_3,L_4)$, $CF^1(L_1,L_4)$  correspond to the intersection points on the upper patch, and the degree-one generator $u\in CF^1(L_2,L_3)$ corresponds to the intersection point on the lower patch. We see that we again get $\mu^3(1+u,1+u,1+u)=u$.

The existence of the Massey product above crucially required $\chr\k=2$, because otherwise we would not get $\mu^2(1+u,1+u)=0$, which is necessary to speak of the triple Massey product of $1+u$ with itself. If $\chr\k\neq 2$, then 
$$HF^*(L_1,L_1)\cong \k[u]/(u^2-1)\cong \k[u]/(u-1)\oplus \k[u]/(u+1)$$ is a direct sum of fields, whose Hochschild cohomology as an ordinary algebra vanishes \cite{Ka86} in degree 2, in contrast to the case $\chr \k=2$. So any \ai algebra over $\k[u]/(u-1)\oplus \k[u]/(u+1)$ is formal by \cite{Kade88}  or \cite[Section 3]{Sei15}, in particular the \ai algebra of the equator on $S^2$ is formal. For example, the product $\mu^3(1+u,1+u,1+u)$ can be made to vanish after a formal diffeomorphism. Because of the non-trivial Massey product in characteristic 2, such a formal diffeomorphism, say over $\mathbb{Q}$, will necessarily involve division by 2, and cannot be realised by any geometric choice of the push-offs $L_i$.

In comparison, the topological \ai algebra of the circle  is formal over a field of any characteristic. Indeed, the topological \ai algebra is $\Z$-graded, so if we make this algebra to be based on the cohomology ring $H^*(S^1)\cong\k[x]/x^2$ where $|x|=1$, the only possibly non-trivial products will be $\mu^k(x,\ldots,x,1,x,\ldots,x)$ for grading reasons. On the other hand, every \ai algebra is quasi-isomorphic to a minimal, strictly unital one over a field of any characteristic \cite[Lemma~2.1]{SeiBook08}, \cite[Theorem~3.1.1]{Le02}. In a minimal strictly unital model,  those products  vanish by definition when $k\ge 3$.


\section{The closed-open map for monotone toric fibres}
\label{sec:toric_fibre}

\subsection{The mechanism of Theorem~\ref{th:CO_invt_lag} for toric fibres}
Let $X$ be an $n$-dimen\-sional compact toric Fano variety, and $T\subset X$ the unique monotone toric fibre.
Evans and Lekili\ \cite{EL15} proved (after this paper had appeared as preprint)
that, if $\chr\k= 0$,
the Fukaya category $\Fuk(X)_w$ is split-generated
by several copies of $T$, equipped with the local systems corresponding to the critical points of the Landau-Ginzburg superpotential with critical value $w\in\k$. 
We shall recall the formula for the superpotential of a toric manifold in the next subsection; the common general references are
\cite{CO06,Au07,FO310b}.

Prior to \cite{EL15},  the split-generation by toric fibres had been proved only in the case when the superpotential is Morse, see~Ritter \cite{Ri16}. (For Ritter, proving split-generation requires considerable effort even in the Morse case, if $W$ has several critical points with the same critical value. However, the difficulty is mainly related to the fact that he allows some non-compact toric varieties, where the injectivity of $\CO^*$ is no longer a criterion for split-generation and one must look at $\mathcal{OC}^*$ instead. If we work with compact manifolds, checking that $\CO^*$ is injective for an arbitrary Morse potential is easy: see Corollary~\ref{cor:W_Morse_split_gen}).
An example of a toric Fano variety with non-Morse superpotential over $\C$ has been obtained by Ostrover and Tyomkin \cite{OT09}, and one can check that the superpotential in their case has an $A_3$ singularity.

To complete the literature overview, we should mention the work in progress by Abouzaid, Fukaya, Oh, Ohta and Ono \cite{AFO3} that will prove the split-generation result for toric manifolds that are not necessarily Fano.

Because the toric fibre $T$ is invariant under all the Hamiltonian loops coming from the torus action, it is an obvious example where Theorem~\ref{th:CO_invt_lag} can be put to the test. It turns out that it does allow to prove split-generation away from the Morse case, though not too far from it: the superpotential is required to have at worst $A_2$ singularities, and an extra condition $\chr \k\neq 2,3$ is required, see Corollary~\ref{cor:W_A_2_split_gen}. 

Although  our result is much weaker than the general one from \cite{EL15}, we find it interesting in our approach that the ability to solve  equation $(*)$ from Theorem~\ref{th:CO_invt_lag} depends on whether $W$ is Morse or not. Equip $T$ with a local system $\rho$ which corresponds to a critical point of $W$, then $(T,\rho)$ is wide and we can identify the vector spaces $HF^*(T,\rho)\cong H^*(T)$ via the PSS map $\Phi$. For convenience, let us rewrite  equation $(*)$ from Theorem~\ref{th:CO_invt_lag}:
\begin{equation*}
\tag{$*$}
\mu^2(a,y)+\mu^2(y,a)=\rho(l)\cdot\langle y,l\rangle\cdot 1_T \quad\text{for\ each}\quad y\in H^1(T).
\end{equation*}
We are using the standard spin structure on $T$ hence the sign
 $(-1)^{\epsilon(l)}$ is positive, see Remark~\ref{rem:toric_positive}.
Recall that Theorem~\ref{th:CO_invt_lag}(b) can be applied if there exists no $a\in HF^*(T,\rho)$ making $(*)$ hold.
The Floer cohomology algebra of $(T,\rho)$ is a Clifford algebra determined by the Hessian of $W$ at the point $\rho$, so  the left-hand side of $(*)$ is equal to $\Hess_\rho W(a,y)\cdot 1_T$, at least when $a\in H^1(T)$; we are using  informal notation for the moment. Therefore, finding an element $a$ solving $(*)$ reduces to finding an $a$ such that 
\begin{equation}
\label{eq:solving_eqn_star_hess}
\Hess_\rho W(a,-)=\mathrm{const} \cdot \langle -,l\rangle.
\end{equation}
The ability to find such an $a$ depends on how degenerate $\Hess_\rho W$ is. If $\rho$ is a Morse point of $W$, such an $a$ can always be found, so Theorem~\ref{th:CO_invt_lag}(b) does not apply. However, the Morse case can actually be covered by Theorem~\ref{th:CO_invt_lag}(a), as we explain below. On the other hand, when $\Hess_\rho W$ has kernel, we will have some elements $l\in H_1(T)$ for which equation (\ref{eq:solving_eqn_star_hess}) has no solution $a$. If we consider the $S^1$-action whose orbit is such an element $l$, Theorem~\ref{th:CO_invt_lag}(b) can be applied to the Seidel element of this $S^1$-action to get some new information on $\CO^*$ which is not seen by $\CO^0$.
This information turns out to be sufficient only when the superpotential has $A_2$ singularities, however, there is a possible way of improvement which we speculate upon in the end of this section.

\subsection{The results}
Recall \cite{Cho05,CO06,FO310b,FO3Book}
that the Landau-Ginzburg superpotential of $X$ is a Laurent polynomial $W\co (\k^\x)^n\to \k$ is given by
$$
W(x_1,\ldots,x_n)=\sum_{e}\sum_{j=1}^n x_j^{e^j}
$$
where the first sum is over the outer normals $e\in\Z^n$ to the facets of the polyhedron defining $X$, and $e^j\in \Z$ are their co-ordinates. 
(Sometimes, the superpotential is written down with a Novikov parameter, but we can ignore it because we will only be working with the monotone torus $T$.)
We identify $(\k^\x)^n$ with the space
of all local systems $H_1(T;\Z)\to \k^\x$. For $\rho\in(\k^\x)^n$, we write
$(T,\rho)$ for the torus equipped with this local system.
Also, we will abbreviate $$HF^*(T,\rho)\coloneqq HF^*((T,\rho),(T,\rho)),$$ and the same for Hochschild cohomology.
It is known, see for example \cite[Proposition 3.3]{OT09}, that
\begin{equation}
\label{eq:QH_toric}
QH^*(X)\cong \k[x_1^{\pm 1},\ldots,x_n^{\pm 1}]/Jac(W)=\O(Z),
\end{equation}
where the Jacobian ideal $Jac(W)$ is generated by $(\bd W/\bd x_1,\ldots,\bd W/\bd x_n)$, and
$Z$ is the subscheme of $\Spec \k[x_1^{\pm 1},\ldots,x_n^{\pm 1}]$ defined by the ideal sheaf $Jac(W)$. Then $Z$ is a 0-dimensional scheme supported at the critical points of $W$,
$$\{\rho_1,\ldots\rho_q\}=\Crit W,\quad \rho_i\in (\k^\x)^n.$$
The obstruction number of the torus is given by
$$
w(T,\rho)=W(\rho)\in \k.
$$
Under the isomorphism (\ref{eq:QH_toric}), the quantum product is the usual product on $\O(Z)$, and the first Chern class of $X$ is given by the function $W$ itself. The generalised eigenspace decomposition with respect to $-*c_1(X)$ is simply the decomposition into the local rings at the points  $\rho_i$:
$$
\k[x_1^{\pm 1},\ldots,x_n^{\pm 1}]/Jac(W)\cong \bigoplus_{\rho_i\in \Crit W}\O_{\rho_i}(Z),
$$
the eigenvalue of the $\rho_i$-summand being the critical value $W(\rho_i)$. From Lemma~\ref{lem:eigenvalue_split}, we see that $HF^*(T,\rho)=0$ if $\rho\notin \Crit W$. On the other hand, it is known that $(T,\rho_i)$ is wide for $\rho_i\in \Crit W$, i.e. $HF^*(T,\rho_i)$ is isomorphic as a vector space to $H^*(T)$.

\begin{lemma}
\label{lem:CO_0_torus_fibre}
 Under the isomorphism (\ref{eq:QH_toric}), the map $\CO^0\co QH^*(X)\to HF^*(T,\rho_i)$ is given by
$$\CO^0(f)=f(\rho_i)\cdot 1_T.$$
Here $f(x_1,\ldots x_n)\in QH^*(X)$, $f(\rho_i)\in \k$ is the value of the function at $\rho_i\in \Crit W$, and $1_T\in HF^*(T,\rho_i)$ is the unit. 
\end{lemma}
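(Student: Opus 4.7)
The plan is to reduce the computation to Corollary~\ref{cor:compute_CO_0} by exploiting the Hamiltonian $S^1$-subactions inherent in the toric structure. The key observation is that $T$ is setwise preserved by every Hamiltonian circle subgroup of the toric action on $X$, so the Seidel element of each such loop is computed explicitly by Corollary~\ref{cor:compute_CO_0}.

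First, I would invoke Theorem~\ref{th:mcduff_tolman} (McDuff-Tolman): for each outer normal $e\in\Z^n$ to a facet of the moment polytope, the toric Hamiltonian loop $\gamma_e$ associated with the corresponding divisor $D_e$ has Seidel element $\S(\gamma_e)=D_e^*\in QH^*(X)$, and these Poincar\'e duals generate $QH^*(X)$ as a $\k$-algebra. Under the presentation (\ref{eq:QH_toric}) the class $D_e^*$ corresponds to the monomial $\prod_{j=1}^n x_j^{e^j}$; this is precisely the identification which turns $c_1(X)=\sum_e D_e^*$ into the Landau-Ginzburg superpotential $W$. Since $\gamma_e$ lies in the torus action, $\gamma_e(T)=T$, and the orbit $\{\gamma_e(q)\}_{t\in S^1}$ of any $q\in T$ represents the class $l_e\in H_1(T;\Z)\cong\Z^n$ corresponding to $e$ itself. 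Because $(\k^\x)^n$ is identified with the space of local systems on $T$ via the same lattice basis, $\rho_i(l_e)=\prod_j x_j^{e^j}\big|_{\rho_i}$, and Corollary~\ref{cor:compute_CO_0} yields
$$
\CO^0(D_e^*)=\rho_i(l_e)\cdot 1_T=\Big(\prod_{j=1}^n x_j^{e^j}\Big)\Big|_{\rho_i}\cdot 1_T.
$$

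To finish, I would observe that the evaluation map $f\mapsto f(\rho_i)\cdot 1_T$ is a well-defined unital algebra homomorphism from $\k[x_1^{\pm 1},\ldots,x_n^{\pm 1}]/Jac(W)$ to $HF^*(T,\rho_i)$, because $\rho_i\in\Crit W$ annihilates $Jac(W)$. Both $\CO^0$ and this evaluation map are unital algebra maps that agree on the generators $D_e^*$ by the previous step, hence they coincide on all of $QH^*(X)$. The only step I expect to require real care is the bookkeeping of lattice conventions: one must check that the identification of $(\k^\x)^n$ with the space of local systems on $T$ and the identification turning $D_e^*$ into the monomial $\prod_j x_j^{e^j}$ are set up using compatible bases of $H_1(T;\Z)$, so that ``evaluating the monomial at $\rho_i$'' and ``applying the local system $\rho_i$ to $l_e$'' literally produce the same scalar. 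Once these conventions are aligned, the argument above is a direct consequence of previously established results.
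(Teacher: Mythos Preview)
Your proposal is correct and follows essentially the same approach as the paper: reduce to algebra generators that are Seidel elements of toric Hamiltonian loops, then apply Corollary~\ref{cor:compute_CO_0} (equivalently Theorem~\ref{th:CO_invt_lag}(a)). The only cosmetic difference is that the paper checks the formula on the coordinate functions $x_k$ rather than on the divisor classes $D_e^*$, which slightly simplifies the lattice bookkeeping you flagged; otherwise the arguments are identical.
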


\begin{proof}
Because $\CO^0$ is a map of algebras, it suffices to prove the lemma when $f=x_k$ is a linear function, $1\le k\le n$. By \cite{MDT06}, $f=\S(\gamma)$ for a Hamiltonian loop $\gamma$ coming from the Hamiltonian torus action, such that the value of the local system $\rho_i$ on an orbit of $\gamma$ equals  the $k$th co-ordinate $\rho_i^k$, which is the same as the value $f(\rho_i)$. So $\CO^0(f)=f(\rho_i)\cdot 1_T$ by Theorem~\ref{th:CO_invt_lag}(a).
\end{proof}

\begin{corollary}
\label{cor:CO_split_torus_fibre}
 For $\rho_i\neq \rho_j\in \Crit W$, the map $\CO^*|_{\O_{\rho_i}(Z)}\to HH^*(T,\rho_j)$ vanishes.
\end{corollary}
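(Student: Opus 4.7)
The plan is to combine the idempotent decomposition of $\O(Z)$ with Lemma~\ref{lem:CO_0_torus_fibre} and an invertibility argument in Hochschild cohomology. Because $Z$ is zero-dimensional with support $\{\rho_1,\ldots,\rho_q\}$, the ring $QH^*(X)\cong \O(Z)$ splits as $\bigoplus_k \O_{\rho_k}(Z)$ into local Artinian $\k$-algebras. Since $\rho_i\neq \rho_j$ as points of $(\k^\times)^n$, some coordinate index $k$ satisfies $\rho_i^k\neq \rho_j^k$; I would set $y:=x_k-\rho_i^k\in QH^*(X)$. This element lies in the maximal ideal of $\O_{\rho_i}(Z)$, which is generated by $x_1-\rho_i^1,\ldots,x_n-\rho_i^n$ and is nilpotent by Artinian-ness, so $y^N\cdot f=0$ in $QH^*(X)$ for some $N$ and every $f\in \O_{\rho_i}(Z)$. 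Applying the unital algebra homomorphism $\CO^*$,
$$
\CO^*(y)^{\star N}\star \CO^*(f)=0\quad\text{in }HH^*(T,\rho_j).
$$

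The main step will be to show that $\CO^*(y)$ is invertible in $HH^*(T,\rho_j)$, so that the displayed equation forces $\CO^*(f)=0$. Lemma~\ref{lem:CO_0_torus_fibre} computes its length-zero part, i.e.~its image under the canonical projection $HH^*(T,\rho_j)\to HF^*(T,\rho_j)$, as $\CO^0(y)=(\rho_j^k-\rho_i^k)\cdot 1_T$, a nonzero scalar multiple of the unit. The Hochschild cochain complex $CC^*(T,\rho_j)=\prod_{m\geq 0}\mathrm{Hom}(CF^*(T)^{\otimes m},CF^*(T))$ carries a decreasing length filtration $F^p$ which is complete and multiplicative under cup product, so writing $\CO^*(y)=c\cdot 1_{HH}+\eta$ with $c\in\k^\times$ and $\eta$ representable by a cochain in $F^1$, the formal geometric series $c^{-1}\sum_{m\geq 0}(-c^{-1}\eta)^{\star m}$ converges in $CC^*(T,\rho_j)$: only finitely many terms contribute to any fixed length. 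The derivation property of the Hochschild differential with respect to the cup product then ensures that this series represents a cocycle, yielding a two-sided inverse to $\CO^*(y)$ in the graded-commutative algebra $HH^*(T,\rho_j)$.

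Combining the two steps, $\CO^*(y)^{\star N}$ is invertible in $HH^*(T,\rho_j)$, and so $\CO^*(f)=0$ for every $f\in\O_{\rho_i}(Z)$, proving the corollary. The main technical obstacle I anticipate is justifying the invertibility step in the present monotone setting: while the geometric series argument is standard for uncurved $A_\infty$-algebras, some care is needed because $CF^*(T,\rho_j)$ carries curvature $\mu^0=W(\rho_j)\cdot 1_T$ from counts of Maslov-two disks. One should check that this curvature, being proportional to the strict unit, does not obstruct lifting the formal inverse to an honest Hochschild cocycle, or else pass to a normalized Hochschild complex in which this curvature is absorbed into the differential and the argument proceeds verbatim.
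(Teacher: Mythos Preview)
Your argument is correct and follows the same strategy as the paper: combine Lemma~\ref{lem:CO_0_torus_fibre} with the unital algebra map $HH^*(T,\rho_j)\to HF^*(T,\rho_j)$ and an invertibility argument. The paper makes the dual choice of auxiliary element, taking $f$ with $f(\rho_i)\neq 0$ and $f(\rho_j)=0$, so that $f$ is a unit in $\O_{\rho_i}(Z)$ while $\CO^0(f)=0$, and then observes that an invertible element cannot lie in the kernel of a unital algebra map to a nonzero ring. Your version, with $y$ nilpotent at $\rho_i$ and a unit at $\rho_j$, is arguably cleaner because it forces the vanishing directly rather than by contradiction, and you spell out the invertibility step via the length filtration, which the paper leaves implicit.

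One small caveat: for an $A_\infty$-algebra the Hochschild differential is a derivation for the cup product only up to homotopy, not strictly, so your series is not literally a cocycle by the derivation property alone. This does not break the argument: the cup product of Hochschild cocycles is again a cocycle and the induced product on $HH^*$ is associative, so you may either pass to a minimal (or dg) model where the issue disappears, or note directly that the convergent series of cocycles represents the required inverse on cohomology. Your concern about curvature is not an obstacle here: in the monotone setup of the paper the curvature $\mu^0$ is a scalar multiple of the unit and is absorbed into the definition of $\Fuk(X)_w$, so $CF^*(T,\rho_j)$ is treated as an honest (uncurved) $A_\infty$-algebra throughout.
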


\begin{remark}
If $W(\rho_i)\neq W(\rho_j)$, Corollary~\ref{cor:CO_split_torus_fibre} follows from Lemma~\ref{lem:eigenvalue_split}. When  $W(\rho_i)=W(\rho_j)$, the statement is implicit in \cite[Proof of Theorem~6.17]{Ri16} where it is shown that, dually, 
$$\OC\co HH_*(T,\rho_j)\to QH^*(X)$$ hits at most one summand of the form $\O_{\rho_i}$, and we know by Lemma~\ref{lem:CO_0_torus_fibre} that this summand must actually be $\O_{\rho_j}$. The proof in \cite{Ri16} is very different and relies on the variation of the symplectic form.
\end{remark}

\begin{proof}
Let $f\in \k[x_1^{\pm 1},\ldots,x_n^{\pm 1}]$ be such that $f(\rho_i)\neq 0$ and $f(\rho_j)=0$. Then, as an element of $\O_{\rho_i}(Z)$, $f$ is invertible. If the corollary does not hold, $\CO^*(f)$ is also invertible. On the other hand, $\CO^0(f)=0\in HF^*(T,\rho_j)$ by Lemma~\ref{lem:CO_0_torus_fibre}. 
The map $HH^*(T,\rho_j)\to HF^*(T,\rho_j)$, which takes a Hochschild cochain to its  zeroth-order term, is a map of unital algebras, by the formula for the Yoneda product and because the Hochschild cohomology unit is represented by a cochain whose zeroth-order term is the Floer cohomology unit (this follows, for example, from the unitality of $\CO^*$). We have determined that $f$ lies in the kernel of $HH^*(T,\rho_j)\to HF^*(T,\rho_j)$, but that contradicts the fact that $f$ is invertible.
This implies the corollary.
\end{proof}

For $w\in \k$, denote
$$
\Crit_w W=\{\rho\in \Crit W\co  W(\rho)=w\}
$$
the set of all critical points of $W$ with the same critical value $w$.
We will sometimes denote the restrictions of $\CO^0$ and $\CO^*$ to subalgebras of $QH^*(X)$ by the same symbol, when it is otherwise clear that we are considering a restriction.

\begin{corollary}
\label{cor:W_Morse_split_gen}
If $\chr\k\neq 2$, the map 
$$\CO^0\co QH^*(X)_w\longrightarrow \bigoplus_{\rho_i\in\Crit_w W} HF^*(T,\rho_i)$$
is injective if and only if all points of $\Crit_w W$ are Morse.
\end{corollary}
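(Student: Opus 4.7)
The plan is to reduce the statement to a local computation at each critical point of $W$ and then to a standard commutative-algebra fact relating the dimension of the Jacobian ring at a critical point to the Morse condition. The identification $QH^*(X)_w\cong\bigoplus_{\rho_i\in\Crit_w W}\O_{\rho_i}(Z)$ together with Corollary~\ref{cor:CO_split_torus_fibre} (applied to $HF^*(T,\rho_j)$ by composing with the canonical projection $HH^*(T,\rho_j)\to HF^*(T,\rho_j)$, which is a map of unital algebras) shows that the map
$$\CO^0\co \bigoplus_{\rho_i\in\Crit_w W}\O_{\rho_i}(Z)\longrightarrow\bigoplus_{\rho_i\in\Crit_w W} HF^*(T,\rho_i)$$
is block-diagonal: the summand $\O_{\rho_i}(Z)$ maps into $HF^*(T,\rho_i)$ and is killed by every projection to $HF^*(T,\rho_j)$ for $j\neq i$. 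Hence the total map is injective if and only if each block $\CO^0|_{\O_{\rho_i}(Z)}$ is injective.

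Next I would pin down each block using Lemma~\ref{lem:CO_0_torus_fibre}. For $f\in\O_{\rho_i}(Z)$ the formula $\CO^0(f)=f(\rho_i)\cdot 1_T$ (where $f(\rho_i)\in\k$ is the residue of $f$ at the closed point of the local ring) shows that the image of $\CO^0|_{\O_{\rho_i}(Z)}$ is contained in the line $\k\cdot 1_T$, and the kernel is exactly the maximal ideal $\mathfrak m_{\rho_i}\cdot\O_{\rho_i}(Z)$. Unitality of $\CO^0$ forces the image to equal $\k\cdot 1_T$, so $\CO^0|_{\O_{\rho_i}(Z)}$ is injective precisely when $\dim_\k \O_{\rho_i}(Z)=1$, i.e.\ when the Jacobian ideal $Jac(W)$ generates the maximal ideal of $(\k^\x)^n$ at $\rho_i$.

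The final step is to match this local-ring condition with the Morse condition on $W$. Passing to logarithmic coordinates $y_j=\log(x_j/\rho_i^j)$ (so that $x_j\partial W/\partial x_j=\partial W/\partial y_j$ and $Jac(W)$ is unchanged), the Taylor expansion at $\rho_i$ gives
$$\frac{\partial W}{\partial y_j}=\sum_k (\Hess_{\rho_i}W)_{jk}\,y_k+O(y^2).$$
By Nakayama's lemma, $Jac(W)=\mathfrak m_{\rho_i}$ in $\O_{\rho_i}(Z)$ if and only if the linear forms $\sum_k(\Hess_{\rho_i}W)_{jk}y_k$ span the cotangent space $\mathfrak m_{\rho_i}/\mathfrak m_{\rho_i}^2$, which happens exactly when $\Hess_{\rho_i}W$ is non-degenerate, i.e.\ when $\rho_i$ is a Morse critical point of $W$. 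Combining the three steps, $\CO^0|_{QH^*(X)_w}$ is injective if and only if each $\rho_i\in\Crit_wW$ is Morse.

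The only mildly non-trivial step is the commutative-algebra passage from $\dim\O_{\rho_i}(Z)=1$ to the Morse condition, but this is standard once logarithmic coordinates are set up; the restriction $\chr\k\neq 2$ plays no role in this algebraic step itself (it is inherited from the ambient setup where the eigenvalue splitting with respect to $-*c_1(X)$ was used). The geometric content of the corollary is essentially packaged into the two earlier statements used as inputs.
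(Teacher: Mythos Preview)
Your proof is correct and follows essentially the same route as the paper: reduce to a block-diagonal map via Corollary~\ref{cor:CO_split_torus_fibre}, identify each block with the evaluation $f\mapsto f(\rho_i)$ via Lemma~\ref{lem:CO_0_torus_fibre}, and observe that this is injective exactly when $\O_{\rho_i}(Z)$ is one-dimensional, i.e.\ when $\rho_i$ is Morse. Your Nakayama/logarithmic-coordinates justification of the last equivalence is more explicit than the paper's one-line ``$\O_{\rho_i}(Z)$ is a field $\Leftrightarrow$ $\rho_i$ is Morse,'' but the content is the same.
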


\begin{proof}
By Corollary~\ref{cor:CO_split_torus_fibre}, $\CO^0$ is injective if and only if  its restrictions 
 $\CO^0\co \O_{\rho_i}(Z)\to HF^*(T,\rho_i)$ are injective for each $\rho_i$.
The map $\O_{\rho_i}(Z)\to\k$ which takes $f\in \O_{\rho_i}(Z)$ to its value $f(\rho_i)$ is injective if and only if $\O_{\rho_i}(Z)$ is a field, which is equivalent to the fact that $\rho_i$ is a Morse point of $W$ when $\chr\k\neq 2$. Now apply Lemma~\ref{lem:CO_0_torus_fibre}.
\end{proof}

\begin{proposition}
\label{prop:CO_inject_W_A2}
 Suppose $\chr\k\neq 2,3$ and $W$ has an $A_2$ singularity at a point $\rho$, then $\CO^*\co \O_{\rho}(Z)\to HH^*(T,\rho)$ is injective.
\end{proposition}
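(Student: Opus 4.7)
The plan is to apply Theorem~\ref{th:CO_invt_lag}(b) to a carefully chosen toric Hamiltonian loop, following the mechanism described in the introduction to this section. Because $W$ has an $A_2$ singularity at $\rho$ and $\chr\k\neq 2,3$, the splitting lemma puts $W$ in the normal form $W=t^{3}+(\text{non-degenerate quadratic})$, so the local ring $\O_\rho(Z)\cong\k[t]/(t^{2})$ is two-dimensional, with a one-dimensional maximal ideal $\mathfrak{m}$ satisfying $\mathfrak{m}^{2}=0$. By Lemma~\ref{lem:CO_0_torus_fibre} the zeroth-order map $\CO^{0}|_{\O_\rho(Z)}$ kills exactly $\mathfrak{m}$, so it suffices to exhibit one nonzero element of $\mathfrak{m}$ whose image under $\CO^{*}$ is nonzero in $HH^{*}(T,\rho)$.

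Identifying $T_{\rho}(\k^{\times})^{n}=H^{1}(T;\k)$ canonically, $\Hess_{\rho}W$ becomes a symmetric form on $H^{1}(T)$ with one-dimensional kernel $K=\spn\{y_{0}\}$; the induced map $\Hess_{\rho}W\co H^{1}(T)\to H_{1}(T)$ then has image equal to the annihilator of $K$, a hyperplane in $H_1(T)$. Let $\gamma_{1},\dots,\gamma_{n}$ be the standard toric Hamiltonian loops, with Seidel elements $\S(\gamma_{i})=x_{i}$ (Theorem~\ref{th:mcduff_tolman}) and orbits $l_{1},\dots,l_{n}$ forming a basis of $H_{1}(T)$. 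Since the $l_{i}$ span $H_{1}(T)$, at least one orbit $l_{j}$ satisfies $\langle y_{0},l_{j}\rangle\neq 0$; fix such an index $j$. I will apply Theorem~\ref{th:CO_invt_lag}(b) to $\gamma_{j}$. Using the standard Clifford algebra description $HF^{*}(T,\rho)\cong\mathrm{Cl}(H^{1}(T),\Hess_{\rho}W)$ (valid for $\chr\k\neq 2$), the element $y_{0}$ anti-commutes with every element of $H^{1}(T)\subset\mathrm{Cl}$, so for a monomial $a=y_{i_1}\cdots y_{i_k}$ one has $y_{0}a=(-1)^{k}ay_{0}$; summing over monomials gives $\mu^{2}(a,y_{0})+\mu^{2}(y_{0},a)=2a^{+}y_{0}$ for any $a=a^{+}+a^{-}\in HF^{*}(T,\rho)$ split by $\mathbb{Z}/2$-degree. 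The right-hand side lies in odd $\mathbb{Z}/2$-degree, hence cannot equal the nonzero scalar multiple $\rho(l_{j})\langle y_{0},l_{j}\rangle\cdot 1_{T}$ of the even-degree unit. Thus equation~$(*)$ from Theorem~\ref{th:CO_invt_lag} already fails at $y=y_{0}$, and the theorem yields $\CO^{*}(\S(\gamma_{j})-\rho(l_{j})\cdot 1)\neq 0$ in $HH^{*}(T,\rho)$.

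To finish, by Corollary~\ref{cor:CO_split_torus_fibre} the map $\CO^{*}$ kills every summand $\O_{\rho'}(Z)$ with $\rho'\neq\rho$, so the contribution to the above inequality comes entirely from the $\rho$-component of $\S(\gamma_{j})-\rho(l_{j})\cdot 1=x_{j}-\rho(l_{j})$. That component lies in $\mathfrak{m}$ and is nonzero: its class in $\mathfrak{m}/\mathfrak{m}^{2}=(T_{\rho}Z)^{*}=K^{*}$ equals the restriction of $dx_{j}\in H_{1}(T)$ to $K\subset H^{1}(T)$, which is nonzero by the choice $\langle y_{0},l_{j}\rangle\neq 0$ together with the identification $dx_{j}=l_{j}$. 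This produces the required nonzero element of $\mathfrak{m}$ with nonzero image in $HH^{*}(T,\rho)$, and combined with injectivity of $\CO^{0}$ on the scalar summand $\k\cdot 1\subset\O_\rho(Z)$ proves that $\CO^{*}|_{\O_\rho(Z)}$ is injective.

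The main obstacle is the Clifford algebra identification of $HF^{*}(T,\rho)$ together with its $\mathbb{Z}/2$-grading, which is a standard but substantive input for monotone toric fibres (compare the discussion preceding the proposition, and the references indicated there); everything else is either a short dimension count in the two-dimensional local ring $\O_\rho(Z)$ or the anti-commutation computation in $\mathrm{Cl}$ outlined above.
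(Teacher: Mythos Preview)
Your proof is correct and follows essentially the same strategy as the paper: pick a vector $y_0$ in the one-dimensional kernel of $\Hess_\rho W$, use the Clifford relations to see that equation~$(*)$ fails at $y=y_0$ for every $a$, and apply Theorem~\ref{th:CO_invt_lag}(b) to a toric loop whose orbit pairs nontrivially with $y_0$. The only notable difference is that the paper first performs an integral linear change of coordinates to arrange $y_0=y_n$ and works with the loop $\gamma_n$ (with $\S(\gamma_n)=x_n$), whereas you simply select any standard loop $\gamma_j$ with $\langle y_0,l_j\rangle\neq 0$ and then argue via $\mathfrak m/\mathfrak m^2\cong K^*$ that the $\rho$-component of $x_j-\rho^j$ is nonzero; your version neatly sidesteps the question of whether the kernel of the Hessian is actually spanned by a lattice vector.
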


\begin{proof}
After an integral linear change of co-ordinates, we may assume that the Hessian of $W$ at $\rho$ is the diagonal matrix:
$\Hess_\rho W=\mathrm{diag}(1,\ldots, 1,0)$. 
We claim that $\O_{\rho}(Z)$ is generated, as a vector space, by the two elements $1$ and $x_n$, where the linear function $x_n$ corresponds to the kernel of $\Hess_\rho W$. Indeed, after a further non-linear change of coordinates with the identity linear part, we can bring $W$ to the canonical form
$$W(\tilde x_1,\ldots, \tilde x_n)=W(\rho)+\sum_{i=1}^{n-1}(\tilde x_s-\rho^i)^2\ + (\tilde x_n-\rho^n)^3.
$$
Here $\rho^i\in\k$ are the co-ordinates of $\rho$. Then $Jac(W)=((\tilde x_1-\rho^1),\ldots,(\tilde x_{n-1}-\rho^{n-1}),(\tilde x_n-\rho^n)^2)$, so $\O_{\rho}(Z)$ is generated, as a vector space, by $1$ and $\tilde x_n$. Because $x_n$, as a function of $\tilde x_1,\ldots,\tilde x_n$, equals $\tilde x_n$ plus terms of order at least 2, it is easy to see that the elements $1,x_n$ also generate the vector space $\O_{\rho}(Z)$.

Let us identify $HF^*(T,\rho)$ with $H^*(T)$ via the PSS map $\Phi$.
Recall that, in general, $HF^*(T,\rho)$ is the algebra generated by $y_1,\ldots,y_n\in H^1(T)$ with relations
$$
y_py_q+y_qy_p=\bd^2_{x_px_q}W(\rho).
$$
In particular, in our case we get $y_py_n+y_ny_p=0$ for any $1\le p\le n$, so $y_n\in HF^1(T,\rho)$ anti-commutes with any element of $HF^*(T,\rho)$ of odd degree.
Consequently, the left hand side of equation $(*)$ from Theorem~\ref{th:CO_invt_lag} vanishes if we put $y=y_n$, and allow $a$ to be of arbitrary odd degree.

Returning to our generator $x_n\in \O_{\rho}(Z)$, we have $x_n=\S(\gamma)$ for a Hamiltonian $S^1$-action (coming from the toric action) such that the element $y_n\in HF^1(T,\rho)$ is dual to the orbit $l\in H_1(T)$ of $\gamma$, so that 
$\langle y_n,l\rangle=1$. So if we put $y=y_n$, the right hand side  of equation $(*)$ from Theorem~\ref{th:CO_invt_lag} becomes
$\rho^n\cdot 1_T\neq 0$. Hence $(*)$ has no solution, and Theorem~\ref{th:CO_invt_lag}(b) says that $\CO^*(x_n)$ and $1_{HH}=\CO^*(1)$ are linearly independent.
\end{proof}

Combining the above discussion with the split-generation criterion, we get the following corollary.

\begin{corollary}
\label{cor:W_A_2_split_gen}
 Suppose $\chr\k\neq 2,3$ and each critical point $\rho_i\in \Crit_w W$ is either Morse or an $A_2$ singularity. Then the copies of the monotone toric fibre with local systems $\{(T,\rho_i)\}_{\rho_i\in\Crit_wW}$ split-generate $\Fuk(X)_w$.\qed
\end{corollary}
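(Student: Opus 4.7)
The plan is to apply the split-generation criterion Theorem~\ref{th:generation_crit}(a), which in the present setting (recall $\chr\k\neq 2$) reduces split-generation of $\Fuk(X)_w$ by $\mathcal{G}=\{(T,\rho_i)\}_{\rho_i\in\Crit_w W}$ to injectivity of
$$\CO^*|_{QH^*(X)_w}\co QH^*(X)_w \longrightarrow HH^*(\mathcal{G}).$$
Since there is a natural restriction-to-diagonal map $HH^*(\mathcal{G})\to \prod_{\rho_i\in\Crit_w W} HH^*(T,\rho_i)$ compatible with $\CO^*$, it is enough to prove that the composite
$$\CO^*\co QH^*(X)_w \longrightarrow \prod_{\rho_i\in\Crit_w W} HH^*(T,\rho_i)$$
is injective.

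First I would invoke the eigenspace decomposition already recorded in the excerpt,
$$QH^*(X)_w \;\cong\; \bigoplus_{\rho_i\in\Crit_w W}\O_{\rho_i}(Z),$$
and then apply Corollary~\ref{cor:CO_split_torus_fibre}: for $\rho_i\neq \rho_j$ the restriction $\CO^*|_{\O_{\rho_i}(Z)}\to HH^*(T,\rho_j)$ vanishes. Consequently, with respect to the above decomposition, the map $\CO^*$ is block diagonal, and its injectivity is equivalent to the injectivity of each of the individual restrictions
$$\CO^*\co \O_{\rho_i}(Z)\longrightarrow HH^*(T,\rho_i),\quad \rho_i\in\Crit_w W.$$

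It remains to handle each critical point according to its type. If $\rho_i$ is a Morse critical point of $W$, then (since $\chr\k\neq 2$) the local ring $\O_{\rho_i}(Z)$ is a field, in fact one-dimensional and equal to $\k$; the unit $1\in\O_{\rho_i}(Z)$ is mapped by the unital map $\CO^*$ to the Hochschild cohomology unit $1_{HH}\in HH^*(T,\rho_i)$, which is nonzero because $HF^*(T,\rho_i)\neq 0$, so $\CO^*$ is injective on this summand. If $\rho_i$ is an $A_2$ singularity of $W$, then injectivity of $\CO^*\co \O_{\rho_i}(Z)\to HH^*(T,\rho_i)$ is precisely the content of Proposition~\ref{prop:CO_inject_W_A2} (which is where the hypothesis $\chr\k\neq 2,3$ is used).

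Assembling these inputs yields injectivity of $\CO^*|_{QH^*(X)_w}$, and hence the claimed split-generation by Theorem~\ref{th:generation_crit}(a). The substantive obstacle here is not the corollary itself but its $A_2$ input Proposition~\ref{prop:CO_inject_W_A2}; once that is granted, the proof is purely an assembly of the eigenspace decomposition, Corollary~\ref{cor:CO_split_torus_fibre}, and unitality of $\CO^*$ in the Morse summands.
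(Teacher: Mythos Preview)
Your proof is correct and is precisely the assembly the paper has in mind: the corollary is stated with a bare \qed after the sentence ``Combining the above discussion with the split-generation criterion, we get the following corollary,'' and the discussion referred to is exactly the eigenspace decomposition, Corollary~\ref{cor:CO_split_torus_fibre} for the block-diagonal structure, unitality in the Morse summands (cf.~Corollary~\ref{cor:W_Morse_split_gen}), and Proposition~\ref{prop:CO_inject_W_A2} for the $A_2$ summands, fed into Theorem~\ref{th:generation_crit}(a). You have simply made explicit what the paper leaves implicit.
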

 
\subsection{A  way of extending Theorem~\ref{th:CO_invt_lag}}
It is in fact not surprising that Theorem~\ref{th:CO_invt_lag} turned out to be efficient only for $A_2$ singularities. The main result on which Theorem~\ref{th:CO_invt_lag} is based upon is Proposition~\ref{prop:compute_CO_1}, which computes the {\it linear} part $\CO^1$ of the closed-open map, while
the only non-Morse singularity whose local Jacobian is generated as a vector space by constant and {\it linear} functions is the $A_2$ singularity (for which the Jacobian is generated by $1$ and $x_n$ as above). One could extend the computation in Proposition~\ref{prop:compute_CO_1} to all orders of $\CO^*$ when applied to products of 1-cochains on $L$; we conjecture that the following holds.

\begin{conjecture}
The restriction 
$$\CO^k(\S(\gamma))|_{CF^1(L,L)^{\otimes k}}\co CF^1(L,L)^{\otimes k}\to CF^0(L,L)$$
equals
\begin{equation}
\label{eq:compute_CO_k}
(-1)^{\epsilon(l)}\rho(l)\cdot (l^*)^{\otimes k}\cdot 1_L
\end{equation}
on symmetrised tensor products of Floer 1-cocycles.
Here $l^*\co CF^1(L,L)\to \k$ is given by $l^*(x)=\langle \Psi(x),l\rangle$, and $l\in H_1(L)$ is the orbit of $\gamma$. 
\end{conjecture}

\begin{remark}
As in Proposition~\ref{prop:compute_CO_1}, part of the statement is that the image of this restriction necessarily lands in $CF^0(L,L)$: this follows for degree reasons.  Although the proof of the above formula should be analogous to Proposition~\ref{prop:compute_CO_1}, one new issue arises which we have not checked in detail. Consider the moduli spaces $\M^\gamma(x_1,\ldots,x_k;x_0)$ from Section~\ref{sec:main_proof} and the pearly moduli spaces analogous to Figure~\ref{fig:loop_count} but with more inputs. The new issue is a different type of domain degenerations coming from the collision of input points: e.g.~several punctured inputs for a curve in $\M^\gamma(x_1,\ldots,x_k;x_0)$ may collide and create a bubble. To prove (\ref{eq:compute_CO_k}), one would need to argue that these collisions cancel out when the input string is symmetrised.
\end{remark}

Formula~(\ref{eq:compute_CO_k}) is a chain level computation, and whether it survives to something non-trivial in Hochschild cohomology will be governed by equations generalising equation $(*)$ from Theorem~\ref{th:CO_invt_lag}; those equations will be determined by the \ai structure maps on $L$ up to order $k+1$. 
When $L$ is the monotone toric fibre,
the \ai structure maps have been related to higher-order partial derivatives of $W$  by Cho \cite{Cho05}, and intuitively, the more degenerate the superpotential is, the more non-trivial information from (\ref{eq:compute_CO_k}) survives to Hochschild cohomology. Consequently, these observations are a possible starting point for proving   split-generation results for toric Fano varieties with other degenerate superpotentials. However, further development of this discussion seems both complicated and not particularly demanded, given the general results of \cite{AFO3,EL15}.

\bibliography{Symp_bib}{}
\bibliographystyle{plain}

\end{document}